\definecolor{webgreen}{rgb}{0,.5,0}
\definecolor{webbrown}{rgb}{.8,0,0}
\definecolor{emphcolor}{rgb}{0.5,0.95,0.95}
\ifpdf \hypersetup{pdftex,
	pdfstartview=FitH, 
	bookmarksopen=true,
	bookmarksnumbered=true
} \else \hypersetup{dvips} \fi
\numberwithin{equation}{section}
\newtheorem{theorem}{Theorem}[section]
\newtheorem{proposition}{Proposition}[section]
\newtheorem{lemma}{Lemma}[section]
\newtheorem{definition}{Definition}[section]
\numberwithin{remark}{section} 
\numberwithin{proposition}{section}
\numberwithin{corollary}{section}
\begin{document}
	
\title[Multitype $\Lambda$-coalescents]{Multitype $\bm{\Lambda}$-coalescents and continuous state branching processes}
\author{Adri\'an Gonz\'alez Casanova, Noemi Kurt, Imanol Nu{\~n}ez Morales, Jos\'e Luis P\'erez}
\date{\today}

\address{Adri\'an Gonz\'alez Casanova, School of Mathematics and Statistical Sciences, Arizona State University, Tempe, USA }
\email{agonz591@asu.edu}
\address{Noemi Kurt, Institut f\"ur Mathematik, Goethe-Universit\"at Frankfurt, Robert-Mayer-Str. 10, 60325 Frankfurt am Main, Germany}
\email{kurt@math.uni-frankfurt.de}
\address{Imanol Nu{\~n}ez, Department of Probability and Statistics, Centro de Investigaci\'on en Matem\'aticas A.C. Calle Jalisco
	s/n. C.P. 36240, Guanajuato, Mexico.}
\email{imanol.nunez@cimat.mx}
\address{Jos\'e-Luis P\'erez, Department of Probability and Statistics, Centro de Investigaci\'on en Matem\'aticas A.C. Calle Jalisco
	s/n. C.P. 36240, Guanajuato, Mexico.}
\email{jluis.garmendia@cimat.mx}

\begin{abstract}{We provide new connections between multitype $\Lambda$-coalescents and multitype continuous state branching processes via duality and a homeomorphism on their parameter space. The approach is based on a sequential sampling procedure for the frequency process of independent CSBPs, and provides forward and backward processes for multitype population models under $\Lambda$-type reproduction.  It provides some insight on different approaches to generalise $\Lambda$-coalescents to the multitype setup.  

\smallskip
\noindent \textbf{Keywords.} Multitype $\Lambda$-coalescent, continuous state branching process, duality

\smallskip
\noindent \textbf{MSC2020 subject classification} 60J90, 60J80}
\end{abstract}

\maketitle


\section{Introduction}
\label{sec:intro}
The genealogy of continuous-state branching processes (CSBPs) has attracted significant attention over the past few decades. Various approaches have been employed to describe them and relate them to other processes, such as coalescents. Le Gall and Le Jan \cite{legallBranchingProcessesLevy1998}, inspired by previous work where a discrete height process encodes the genealogy of a Galton--Watson process (see, for instance, \cite{legallRandomTreesApplications2005}), proposed a method to encode the genealogy of a CSBP through a real-valued random process also termed the height process. This approach was further developed by Duquesne and Le Gall \cite{duquesneRandomTreesLevy2005}, who reformulated and solved problems about the genealogy of CSBPs in terms of spectrally positive L\'evy processes. Later, Bertoin and Le Gall \cite{bertoinBolthausenSznitmanCoalescent2000} introduced a new perspective, suggesting that the genealogy of CSBPs could be studied using flows of subordinators. Specifically, they demonstrated that the genealogy of Neveu's CSBP corresponds to the Bolthausen--Sznitman coalescent. In a series of works \cite{bertoinStochasticFlowsAssociated2003,bertoinStochasticFlowsAssociated2005,bertoinStochasticFlowsAssociated2006}, Bertoin and Le Gall defined the $\Lambda$-Fleming--Viot process, proving its moment duality with the $\Lambda$-coalescent introduced by Pitman \cite{Pitman1999} and Sagitov \cite{Sagitov1999}. Pitman characterized the $\Lambda$-coalescent as the exchangeable and consistent coalescent where separate coalescent events cannot occur simultaneously.  In the final paper of the series \cite{bertoinStochasticFlowsAssociated2006}, Bertoin and Le Gall established that locally, the $\Lambda$-Fleming--Viot process behaves similarly to a CSBP. This connection allowed them to link the notion of ``coming down from infinity'' for $\Lambda$-coalescents to the extinction of a corresponding CSBP. However, their proof was analytical, and they conjectured the existence of a deeper, probabilistic link between $\Lambda$-coalescents and CSBPs.

Birkner, Blath, Capaldo, Etheridge, M\"ohle, Schweinsberg and Wakolbinger \cite{birknerAlphaStableBranchingBetaCoalescents2005} took a different approach. Using the lookdown construction of Donnelly and Kurtz \cite{donnellyCountableRepresentationFlemingViot1996,donnellyParticleRepresentationsMeasureValued1999}, they defined the genealogy of a CSBP and a measure-valued process simultaneously. Birkner et al. showed that the genealogy derived from the lookdown construction could be time-changed to be a Markov process if and only if the CSBP is $\alpha$-stable, in which case the time-changed genealogy corresponds to a $\beta$-coalescent. This result generalizes the findings of Bertoin and Le Gall \cite{bertoinBolthausenSznitmanCoalescent2000} which corresponds to  $\alpha = 1$. Furthermore, if $X$ and $Y$ are two independent realizations of a CSBP, the frequency process $R = X / (X + Y)$ can be time-changed to be the dual of the block-counting process of a $\Lambda$-coalescent if and only if the branching mechanism is $\alpha$-stable. Building on earlier ideas \cite{donnellyParticleRepresentationsMeasureValued1999,birknerAlphaStableBranchingBetaCoalescents2005}, Berestycki et al. \cite{berestyckiSmalltimeCouplingLambdacoalescents2014} constructed a coupling between a $\Lambda$-coalescent and an associated CSBP, addressing the conjecture posed by Bertoin and Le Gall in \cite{bertoinStochasticFlowsAssociated2006}.

Caballero et al. \cite{caballeroRelativeFrequencyTwo2023}, motivated by studying genetic dynamics in populations with two types of individuals and potentially distinct reproduction mechanisms, introduced the sequential sampling procedure, referred to as ``culling'' in their work and inspired by Gillespie's framework \cite{Gillespie74}. They showed that, if $X$ and $Y$ are two independent CSBPs, the sequential sampling procedure discretizes the frequency process $R$ to derive an autonomous frequency process. When $X$ and $Y$ share the same reproduction mechanism, this autonomous frequency process is dual to the block-counting process of a $\Lambda$-coalescent, with $\Lambda$ explicitly determined by the branching mechanism's parameters. Caballero et al. constructed a homeomorphism between CSBP parameters and the driving measure $\Lambda$ of $\Lambda$-coalescents, deepening the connection between these processes. If we call this homeomorphism Gillespie's transformation, an alternative and equivalent definition of the $\Lambda$-coalescents is the image of the CSBP under Gillespie's transformation. 

The previously mentioned processes naturally generalize to the multitype set up. Multitype branching processes are a well-established area of research, and there are many examples of multitype aspects in coalescent processes. For instance, structured coalescents where blocks have different types and interact depending on their types are a prominent class. The two-island model is the simplest example of this category. More recent examples include the seed bank coalescent with simultaneous switching~\cite{Blathetal2020}, and the nested coalescent~\cite{blancasTreesTreesSimple2018}. A natural question arises: What is a multitype \(\Lambda\)-coalescent? Extending Pitman's characterization to the multitype case yields a reasonable definition: A multitype \(\Lambda\)-coalescent in the Pitman sense is a multitype exchangeable and consistent coalescent where separate coalescent events cannot occur simultaneously. It can be shown that the examples mentioned above are multitype \(\Lambda\)-coalescents in the Pitman sense.

Recently, Johnston and Lambert \cite{johnstonCoalescentStructureUniform2021} used a Poissonization technique to investigate the genealogy of multitype CSBPs. While they showed that the genealogy of a CSBP is not generally Markovian, they proved that it can locally be represented as a $\Lambda$-coalescent. For multitype CSBPs, the local genealogy corresponds to the newly introduced multitype $\Lambda$-coalescent by Johnston, Kyprianou and Rogers \cite{johnstonMultitypeLcoalescents2022a}. The definition in \cite{johnstonMultitypeLcoalescents2022a} of  multitype $\Lambda$-coalescent  states that a  multitype $\Lambda$-coalescent is a  multitype $\Lambda$-coalescent in the Pitman sense with the additional condition that simultaneous transitions of multiple blocks are not permitted. This excludes other exchangeable and consistent multitype coalescents, such as the seed bank coalescent with multiple switches \cite{Blathetal2020} and the nested coalescent~\cite{blancasTreesTreesSimple2018}.

In this work, we characterize the multitype $\Lambda$-coalescents that can be constructed from multitype CSBPs using Gillespie's transformation. We show that the Gillespie-multitype-$\Lambda$-coalescent aligns with the multitype $\Lambda$-coalescent introduced in \cite{johnstonMultitypeLcoalescents2022a}. Thus, the Johnston, Kyprianou and Rogers notion of a multitype $\Lambda$-coalescent naturally extends the $\Lambda$-coalescent in the context  of multitype CSBP genealogy.  Furthermore, we introduce the multitype sequentially sampled processes as the moment duals of the block counting process of multitype $\Lambda$-coalescents.  This partially extends recent results by M\"ohle \cite{mohleMultitypeCanningsModels2023} by introducing forward in time processes to the genealogies.  Following Caballero et al. \cite{caballeroRelativeFrequencyTwo2023}, we construct an autonomous multitype frequency process via sequential sampling. We prove that this autonomous frequency process is in moment duality with the block-counting process of a multitype $\Lambda$-coalescent. This result establishes a homeomorphism between the parameter spaces of multitype CSBPs and multitype $\Lambda$-coalescents.

While the Gillespie-$\Lambda$-coalescent agrees with the Pitman-$\Lambda$-coalescent in one dimension, our results demonstrate that this is not the case in the multitype setting. Furthermore, we show that the Gillespie-multitype-$\Lambda$-coalescents correspond to the class characterized by Johnston et al. This leads to two natural open problems for future research, namely the characterisation of the multitype $\Lambda$-coalescent in the Pitman sense, and determining the superset of multitype CSBPs that is mapped to the multitype $\Lambda$-coalescent in the Pitman sense via Gillespie's homeomorphism.

\section{Model and main results}

In this section, we briefly review the main mathematical objects of our study, namely multitype $\Lambda$-coalescents and multitype continuous state branching processes (CSBPs). 
Then we go on to study the frequency process of our CSBPs via a \emph{sequential sampling procedure} (sometimes also called culling method in the literature). 
We derive a duality with the block counting process of a multitype $\Lambda$-coalescent, that serves as a stepping stone in deriving our main result, the homeomorphism between the state of multitype CSBPs and multitype $\Lambda$-coalescents. 
For now we fix some notation that will be used throughout the paper in order to deal with the multitype setting.  
For $x \in \mathbb{R}_+^d$ we consider $|x| = \sum_{i = 1}^d x_i$. 
Moreover, for $d \in \mathbb{N}$ we put $[d] = \{1, \ldots, d\}$ and $[d]_0 = \{0\} \cup [d]$. 
With this notation, if  $n \in \mathbb{N}^d$, we set $[n] = [n_1] \times \cdots \times [n_d]$ and $[n]_0$ in an analogous way. 
Now, for $r \in \mathbb{R}_+^d$ and $n \in \mathbb{N}^d$ we set 
\begin{equation} \label{eq:multiidex_power}
    r^n := \prod_{i = 1}^d r_i^{n_i}
\end{equation}
Following this multi-index notation, we also consider that for $n \in \mathbb{N}^d$ and $k \in [n]_0$, 
\begin{equation} \label{eq:multiindex_binom}
    \binom{n}{k} = \prod_{i = 1}^d \binom{n_i}{k_i}.
\end{equation}

\subsection{Multitype \texorpdfstring{$\mathbf{\Lambda}$}{L}-coalescents}
$\Lambda$-coalescents or coalescents with multiple collisions have famously been introduced by Pitman \cite{Pitman1999} and Sagitov \cite{Sagitov1999}.  They may be characterised as being Markov processes taking values in the partitions of $\mathbb{N},$ with transitions consisting of mergers of finitely many blocks, satisfying three conditions:
\begin{itemize}
\item \emph{Exchangeability}, that is invariance under permutation of the blocks of a partition,
\item \emph{Consistency}, meaning that the restriction of the process to partitions of $\{1,...,n\}$ is again Markov for any $n\in\mathbb{N},$
\item No \emph{simultaneous} occurrence of mergers (\emph{asynchronity}).
\end{itemize}

The well-known result of Pitman states that such processes are uniquely characterised by a finite measure $\Lambda$ on $[0,1]$, from which the rates of merging a $k$-tuple out of $b$ distinct blocks to one new block are calculated as 
\begin{equation}\label{eq:Pitman}
\lambda_{b,k}=\int_{[0, 1]} x^{k-2}(1-x)^{b-k} \Lambda(dx).
\end{equation}
Equivalently, the previous expression can also be written as 
\begin{equation}
    \lambda_{b, k} = \Lambda(\{0\}) 1_{\{k = 2\}} + \int_{(0, 1]} x^{k - 2} (1 - x)^{b - k} \Lambda(dx) ,
\end{equation}
where it becomes explicit that the mass of $\Lambda$ at $0$ only contributes to pairwise mergers.  

Recently, Johnston, Kyprianou and Rogers \cite{johnstonMultitypeLcoalescents2022a} have generalised this to multitype coalescents. 
They consider processes taking values in $d$-type partitions of $\mathbb{N}$, meaning that every block in the partition has one out of $d$ different types (or colours). 
Blocks may change type, and finitely many blocks (of same or different types) may merge to form a single new block (of one type). 
We may equivalently think of a spatial structure,  with types corresponding to islands and colonies, and type changes corresponding to migration. 
In \cite{johnstonMultitypeLcoalescents2022a} it is proved that the rates of $d$-type coalescent processes which are exchangeable and consistent, and where no multiple \emph{transitions} of blocks occur simultaneously are exactly the multitype $\Lambda$-coalescents defined by a generalisation of the condition \eqref{eq:Pitman}. 
More precisely, according to \cite{johnstonMultitypeLcoalescents2022a}, a \emph{multitype $\Lambda$-coalescent}, which we will denote simply by $\Pi$, is characterized  by the pair 
\begin{equation} \label{eq:charact_1_mtlc}
    \bigl( \{\rho_{ij} : i, j \in [d]\}, \{\mathcal{Q}_i : i \in [d]\} \bigr) ,
\end{equation}
where $\rho_{ij} \in \mathbb{R}_+$ for all $i, j \in [d]$ and for each $i \in [d]$, $\mathcal{Q}_i$ is a measure on the unit cube $[0, 1]^d$, without mass at the point $\{0\}$, satisfying the integrability condition 
\begin{equation}
    \int_{[0, 1]^d \setminus \{0\}} \sum_{j = 1}^d u_j^{1 + \delta_{ij}} \mathcal{Q}_i(du) < \infty . 
\end{equation}

    Given that there are $b \in \mathbb{N}_0^d$ blocks, the rate at which $k \in [b]_0$ blocks are involved in a merger where the resulting block is of type $i$ is 
    \begin{equation} \label{eq:lambda_i_multitype_coal}
        \lambda_{b, k}^i 
        = \rho_{ii} 1_{\{k = 2 e_i\}} 
        + \sum_{j \in [d] \setminus \{i\}} \rho_{ij} 1_{\{k = e_j\}} 
        + \int_{[0, 1]^d \setminus \{0\}} u^k (1 - u)^{b - k} \mathcal{Q}_i(du).
    \end{equation}
    For $i, j \in [d]$, $\rho_{ij}$ is then the rate at which a block of type $j$ becomes a block of type $i$ when $i \neq j$, corresponding to migration, while $\rho_{ii}$ is the rate at which two blocks of type $i$ merge into a block of type $i$.

\medskip

It has been noted that there is a slight discrepancy in this generalisation of $\Lambda$-coalescents to a multitype setting: 
While in \cite{johnstonMultitypeLcoalescents2022a} asynchronity is required for \emph{any} type of transition, that is, both for type changes and mergers, it is possible to define multitype coalescent processes that are exchangeable and consistent with asynchronous \emph{mergers} of two or more blocks, but \emph{simultaneous} type changes, that may equally well be considered multitype generalisations of $\Lambda$-coalescents. 
A simple example is given by the so-called seedbank ($\Lambda$-)coalescent with simultaneous switches of \cite{Blathetal2020}. 
This shows that the question of the most natural generalisation of $\Lambda$-coalescents to the multitype setting should be treated with some care in this respect, since they don't coincide.
The main results of our paper shows that actually the approach of \cite{johnstonMultitypeLcoalescents2022a}  is natural, as the space of those multitype multiple merger coalescents are dual and homeomorphic to certain multitype CSBPs.

\subsection{Continuous state branching processes}
\label{sec:notation}
Continuous state branching processes (CSBPs) are Mar\-kov processes with values in $[0,\infty]$ that are the continuous space and time versions of Galton-Watson processes.  A CSBP $X$ is described in terms of its infinitesimal generator given, for any $f\in\mathcal{C}_0^2(\mathbb{R}_+)$, by
\begin{equation}
    \mathcal{L}_Xf(x) 
     = x b f'(x) 
    + x \frac{c}{2} f''(x) 
    + x \int_{(0,\infty)} (f(x+w) - f(x) - w 1_{(0,1]}(w) f'(x)) \mu(dw), \qquad x \geq 0,
\end{equation}
where $b\in\mathbb{R}$, $\gamma$ and $c$ are non-negative numbers, and $\mu$ 
is a measure on $\mathbb{R}_+$ satisfying the integrability condition $\int_{(0,\infty)} (1\wedge w^2) \mu(dw) < \infty$. 

We can think of $b$ as a drift or mass production term, $c$ as a diffusion or variance term, and $\mu$ is associated to large reproduction events.

This class of processes have the branching property, meaning that the sum of independent copies of the process started at $x$ and $y$ respectively has the same law as the process started at $x+y.$  According to Lamperti \cite{Lamperti1967} $X$ can also be obtained from a L\'evy process with nonnegative jumps and is characterised via its Laplace-exponent.

\medskip

In what follows we will consider \emph{multitype continuous state branching processes}, which are strong Markov processes $X=(X(t))_{t \geq 0}$ with values in $\mathbb{R}_+^d$, that generalise the above setup. 
We will think of $d$ different types, or equivalently of a system of $d$ colonies, with interacting CSBPs for each type or colony.

For a formal definition we follow Duffie et al. \cite{duffieAffineProcessesApplications2003} (see also \cite{barczyStochasticDifferentialEquation2015a,watanabeTwoDimensionalMarkov1969}). Let $U_d := \mathbb{R}_+^d \setminus \{0\}$ and $e_i$ be the $i$-th canonical vector.  
Moreover, consider
\begin{itemize}
    \item $B = (b_{ij})_{i, j = 1}^d \in \mathbb{R}_{(+)}^{d \times d}$, 
        where $\mathbb{R}_{(+)}^{d \times d}$ is the set of $d \times d$ 
        matrices such that $b_{ij} \in \mathbb{R}_+$ for $i \neq j$. We interpret $b_{ij}$ as the rate at which mass in colony $i$ is produced according to the mass in colony $j$; 
    \item $c \in \mathbb{R}_+^d$ a diffusion (or variance) term. 
    \item Let $\xi=(\xi_1,\dots,\xi_d):U_d \to [-1,1]^d$ be the continuous truncation function given by
    \begin{equation*}
        \xi_i(x) = (1 \wedge |x_i|) \frac{x_i}{|x_i|}.
    \end{equation*}
    \item For $i \in [d]$, $\mu_i$ is a Borel measure on $U_d$ such that
        \begin{equation} \label{eq:integrability_conditions_mu}
            \int_{U_d}
            \Bigl( \xi^2_i(w) + \sum_{j \in [d] \setminus \{i\}}  \xi_j(w) \Bigr) 
            \mu_i(dw) 
            < \infty, 
        \end{equation}
        governing reproduction resp.  large reproduction events in colony $i$.
\end{itemize}

Under these assumptions, the operator $\mathcal{L}$ acts on functions $f \in \mathcal{C}_c^2(\mathbb{R}_+^d)$ by
\begin{align} \label{eq:branching_generator}
    \mathcal{L} f(x) = {}
    &\langle B x, \nabla f(x) \rangle
    + \sum_{i = 1}^d c_i x_i \partial_{ii} f(x) 
     + \sum_{i = 1}^d x_i \int_{U_d} 
    \bigl( f(x + w) - f(x) - (1\wedge w_i) \partial_i f(x) \bigr) \mu_i(dw) ,
\end{align}
is the generator of a Markov process $X=(X(t))_{t \geq 0}$ on $\mathbb{R}^d_+$ that we call the \emph{$d$-type continuous state branching processes} with characterising tuple $(B, c,\mu)$. 

\subsection{Multitype population model and the sequential sampling procedure}
\label{sec:colonies}
We now turn to the setup in which we investigate the connections between multitype $\Lambda$-coalescents and multitype CSBPs. 
We consider two independent realisations $X$ and $Y$ of the $d$-type continuous state branching process defined in Section \ref{sec:notation}, that we may think of as two populations. 
We first study the frequency process of one population, and apply a sequential sampling procedure that leads to a Markov process, for which we obtain a duality.  
Then we will reach the aforementioned homeomorphism between $d$-type CSBPs and $d$-type $\Lambda$-coalescents in the sense of \cite{caballeroRelativeFrequencyTwo2023}, which is our main result.

Throughout this section we will fix $d \in \mathbb{N}$ and will take $X$ and $Y$ as two independent realisations of the multi-type branching process described in Section \ref{sec:notation}. 
The generator of the process $(X, Y)$, $\mathcal{A}$, acts on functions 
$f \in \mathcal{C}_0^2(\mathbb{R}_+^d \times \mathbb{R}_+^d)$, and is given by 
\begin{align} \label{eq:branching_two_colonies}
    \mathcal{A} f(x, y) = {}
    & \langle  B x, \nabla_x f(x, y)\rangle + \sum_{i = 1}^d c_i x_i \partial_{x_i x_i} f(x, y) \notag\\
    & + \sum_{i = 1}^d x_i \int_{U_d} \bigl( f(x + w, y) - f(x, y) - \xi_i(w) \partial_{x_i} f(x, y) \bigr) \mu_i(dw) \notag\\
    & + \langle B y, \nabla_y f(x, y)\rangle + \sum_{i = 1}^d c_i y_i \partial_{y_i y_i} f(x, y) \notag\\
    & + \sum_{i = 1}^d y_i \int_{U_d} \bigl( f(x, y + w) - f(x, y) - \xi_i(w) \partial_{y_i} f(x, y) \bigr) \mu_i(dw). 
\end{align}
Consider the frequency process $R = (R_i(t))$ and the total mass process $Z = (Z_i(t)),$ $i \in [d]$, $t \geq 0$ defined by 
\begin{equation}\label{eq:defRZ}
    R_i(t) = \frac{X_i(t)}{X_i(t) + Y_i(t)} 
    \quad\text{and}\quad 
    Z_i(t) = X_i(t) + Y_i(t) ,
    \quad i \in [d], 
\end{equation}
for $t \in [0, \kappa)$, where 
\begin{equation}\label{def_kappa}
    \kappa := \min_{i \in [d]} \inf \bigl\{ t \geq 0 : Z_i(t) \in \{0, \infty\} \bigr\} .
\end{equation}
It readily follows that $(R, Z)$ is Markovian due to being in a one-to-one correspondence to $(X, Y)$ and the fact that the latter process is Markovian due to $X$ and $Y$ being independent Markov processes. 
Then some computations allow us to obtain its infinitesimal generator through a martingale problem and It\^o's formula, see Proposition 1 in \cite{caballeroRelativeFrequencyTwo2023} for the unidimensional case. 
For completeness, we state the corresponding result below in our multidimensional setting. 
The frequency process $(R(t))$ in general isn't Markovian. However, as in \cite{caballeroRelativeFrequencyTwo2023} we can apply a \emph{sequential sampling procedure} that goes back to ideas of Gillespie (where it was called ``culling'', \cite{Gillespie74}). The idea is to keep the total population size constant on a dense set of times, and let the frequency process run independently,  but with a fixed constant population size.

    We now give an introduction to the sequential sampling method. 
    Fix $r \in [0, 1]^d$ and $z \in (0, \infty)^d$. 
    Consider $\varepsilon, L > 0$ such that $0 < \varepsilon < \min_{i \in [d]} z_i \leq \max_{i \in [d]} z_i < L$. 
    For each $n \in \mathbb{N}$ we will construct a pure jump Markov process $\overline{R}^n$ with values in $[0, 1]$, using the following scheme: 
    \begin{enumerate}
        \item Set $\mathcal{R}^{(n)}(0) = r$. 
        \item For every $m \in \mathbb{N}$ let $(R^{(m)}, Z^{(m)})$ be a realisation of $(R, Z)$ started at $(\mathcal{R}^{(n)}(m-1), z)$. 
        \item Put 
            \[
                \mathcal{R}^{(n)}(m) = R^{(m)}\biggl( \frac{1}{n} \wedge \tau^{(m)} \biggr) \,,
            \]
            where 
            \[
                \tau^{(m)} := \min_{i \in [d]} \inf \{ t \geq 0 : Z_i(t) \notin (\varepsilon, L) \} \,. 
            \]
        \item For every $t \in \mathbb{R}_+$, set $\overline{R}^n(t) := \mathcal{R}^{(n)}(N(nt))$, where $N$ is a Poisson process of rate $1$. 
    \end{enumerate}
    By construction, the discrete approximation of the sequentially sampled process  $\overline{R}^n$ is a pure jump Markov process with exponential jump times of rate $n$, and whose transition kernel $\rho_{z, n} : [0, 1]^d \times \mathcal{B}([0, 1]^d) \to [0, 1]$ is given by 
\begin{equation}
        \rho_{z, n}(r', A) := \mathbb{P}_{r', z}\Bigl( R(n^{-1} \wedge \tau) \in A, Z(n^{-1} \wedge \tau) \in \mathbb{R}_+^d \Bigr) \,,
\end{equation}
    where $\tau = \min_{i \in [d]} \inf \{t \geq 0 : Z_i(t) \notin (\varepsilon, L)\}$, and $P_{r', z}$ is the law of the process $(R, Z)$ started at $(r, z)$. 
    Moreover, $\overline{R}^n$ has infinitesimal generator whose action on functions $f \in \mathcal{C}([0, 1]^d)$ is given by 
\begin{equation}
        \mathcal{A}_n^{(z)} f(r') = n \int_{[0, 1]^d} \bigl( f(u) - f(r') \bigr) \rho_{z, n}(r', du), \quad r' \in [0, 1]^d \,.
\end{equation}
    Note that in this construction, we are sampling whole realisations of the process $\{(R(t \wedge \tau), Z(t \wedge \tau)) : t \geq 0\}$, which have in common the starting value of the second entry, to construct the skeleton chain $\mathcal{R}^{(n)}$. 
    By construction, $\overline{R}^n$ evolves as the first coordinate of the process $(R, Z)$ but, as $n$ tends to infinity, the fluctuations of $Z$ around $z$ become increasingly smaller, since the times between jumps converge to zero as $n$ increases to infinity. 
    This last remark implies that as $n$ tends to infinity, the election of $\varepsilon$ and $L$ becomes irrelevant, in the sense that the limit of the jump processes will not depend on these constants.  
    See also \cite{caballeroRelativeFrequencyTwo2023}, Section 4.2.

As in \cite{caballeroRelativeFrequencyTwo2023}, for $z\in (0,\infty)^n$ fixed, one can prove convergence of this sequence of processes to a process $R^{(z,r)}$ that we call the \emph{sequentially sampled process}.  
We sketch the general idea, the technical details will be given in Section \ref{sec:proof}.
As we are interested in the frequency process $R$ with $Z$ fixed,  we write the generator $\mathcal{A}$ formally acting on functions that depend only on $r \in [0, 1]^d$, although we stress the dependence on the total mass in each of the colonies, given by $z \in \mathbb{R}_+^d$, by writing $\mathcal{A}^{(z)}$. 

By looking closely at \eqref{eq:branching_two_colonies}, the action of this simplified generator on a test function $f \in \mathcal{C}^2([0, 1]^d)$
is formally given by  
\begin{align} \label{eq:gen_d_colonies}
    \mathcal{A}^{(z)} f(r) = {}
    & \sum_{i = 1}^d \sum_{j \in [d] \setminus \{i\}} b_{ij} \frac{z_j}{z_i} (r_j - r_i) 
    \partial_i f(r) 
    + \sum_{i = 1}^d \frac{c_i}{z_i} r_i (1 - r_i) \partial_{ii} f(r) \notag\\
    & + \sum_{i = 1}^d z_i r_i \int_{[0,1)^d \setminus \{0\}} \bigl( 
    f(r + (1 - r) \odot u) - f(r) - (1 - r_i) u_i \partial_i f(r) \bigr) 
    \mathbf{T}_z \mu_i(du) \notag\\
    & + \sum_{i = 1}^d z_i (1 - r_i) \int_{[0,1)^d \setminus \{0\}} \bigl( 
    f(r - r \odot u) - f(r) + r_i u_i \partial_i f(r) \bigr) \mathbf{T}_z\mu_i(du),
\end{align}
where $\odot$ denotes the Hadamard product of componentwise multiplication.
The strategy of the proof will consist in proving the existence and uniqueness of a process solving a system of SDEs given in \eqref{eq:sde_d_colonies} below that corresponds to this simplified generator, and then prove that the approximating process converges to this solution by showing convergence of the generators.
For this we will use the following notation. 
Given $z \in (0, \infty)^d$ and and a measure $\mu$ over $U_d$, let $\mathbf{T}_z\mu$ be the pushforward measure of $\mu$ under the mapping $T_z : U_d \to [0, 1]^d$ given by 
\begin{equation}\label{fun_T}
    T_z(w_1, \ldots, w_d) = \biggl(\frac{w_1}{w_1 + z_1}, \ldots, \frac{w_d}{w_d + z_d}\biggr).
\end{equation}
Consider the system of SDEs\footnote{Formally 
    all of the terms in \eqref{eq:sde_d_colonies} should be multiplied by 
$1_{\{R^{(z, r)}(t) \in [0, 1]^d\}}$, but since later in Section \ref{sec:proof} we prove that the solution is always in $[0,1]^d$, we drop this term to ease the notation. }
\begin{align} \label{eq:sde_d_colonies}
    d R^{(z,r)}_i(t) = {}
    & 
    \sum_{j \in [d] \setminus \{i\}} 
    \biggl( b_{ij} \frac{z_j}{z_i} + z_j \int_{[0, 1]^d} u_i \mathbf{T}_z \mu_j(du) \biggr) 
    (R^{(z,r)}_j(t) - R^{(z,r)}_i(t))  dt \notag\\
    & + \sqrt{ \frac{2c_i}{z_i} R^{(z,r)}_i(t) (1 - R^{(z,r)}_i(t)) } dB_i(t) \notag\\
    & + \sum_{j = 1}^d \int_{[0,1)^d \setminus \{0\}} \int_0^\infty u_i (1 - R^{(z,r)}_i(t-)) 
    1_{\{v \leq z_i R^{(z,r)}_j(t-)\}} \widetilde{N}_1^j(dt, du, dv) \notag\\
    & + \sum_{j = 1}^d \int_{[0,1)^d \setminus \{0\}} \int_0^\infty (-u_i R^{(z,r)}_i(t-)) 
    1_{\{v \leq z_i (1 - R^{(z,r)}_j(t-))\}} \widetilde{N}_2^j(dt, du, dv), \notag\\
    R_i^{(z,r)}(0) = {} & r_i,
\end{align}
for $i \in [d]$, where $r \in [0,1]^d$, $z \in (0, \infty)^d$ is fixed, $B = (B_1, \ldots, B_d)$ is a $d$-dimensional Brownian motion, and for $j \in [d]$, 
$N_1^j(dt, du, dv)$ and $N_2^j(dt, du, dv)$ are Poisson random measures on $\mathbb{R}_+ \times [0, 1]^d \times \mathbb{R}_+$ with intensity measure $dt \mathbf{T}_z\mu_j(du) dv$, whereas $\widetilde{N}_1^j(dt, du, dv)$ and $\widetilde{N}_2^j(dt, du, dv)$ are the associated compensated random measures.
All of the elements are assumed to be defined on the same complete probability 
space and independent of each other.  We can then state the main theorem for the sequentially sampled process.

\begin{theorem} \label{theo:culling_limit}
    For any $z \in (0, \infty)^d$ and $r \in [0, 1]^d$, there exists a unique strong solution $R^{(z,r)}$ to \eqref{eq:sde_d_colonies} with initial condition $R^{(z,r)}(0) = r$. 
    Additionally, 
    $R^{(z,r)}$ is a Feller process, and 
    given $z \in (0,\infty)^d$ and $T > 0$, $\overline{R}^{n} \Rightarrow R^{(z,r)}$ as $n \to \infty$ in $D([0, T], [0, 1]^d)$.
\end{theorem}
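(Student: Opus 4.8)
The plan is to prove the three assertions together, using a pathwise uniqueness estimate as the analytic core and then invoking the Yamada--Watanabe principle to link it to the approximation scheme.

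First I would establish pathwise uniqueness for the system \eqref{eq:sde_d_colonies}. On the compact cube $[0,1]^d$ the drift is Lipschitz in $r$, so it poses no difficulty. The diffusion coefficient $\sqrt{2c_iz_i^{-1}R_i(1-R_i)}$ is only H\"older-$1/2$, so I would run the Yamada--Watanabe argument: given two solutions $R,\widetilde R$ driven by the same Brownian motion and the same Poisson random measures, introduce the usual smooth approximations $\phi_k$ to $|\cdot|$ whose second derivatives concentrate near the origin, and apply It\^o's formula to $\sum_{i}\phi_k(R_i(t)-\widetilde R_i(t))$. The Brownian contribution is absorbed because $|\sqrt{x(1-x)}-\sqrt{y(1-y)}|^2\le C|x-y|$ on $[0,1]$, so the second-order term stays bounded and vanishes as $k\to\infty$. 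For the two jump integrals I would use that the amplitudes $u_i(1-R_i)$ and $-u_iR_i$ are Lipschitz in $r$ and that $\int_0^\infty|1_{\{v\le z_iR_j\}}-1_{\{v\le z_i\widetilde R_j\}}|\,dv=z_i|R_j-\widetilde R_j|$, so after compensation every contribution is dominated by $C\,\mathbb{E}|R-\widetilde R|$; Gronwall then forces $R=\widetilde R$. This step, namely merging the square-root Yamada--Watanabe estimate with the state-dependent jump rates carried by the indicators, is the one I expect to be the main obstacle.

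Rather than constructing a solution by hand, I would harvest weak existence from the approximation scheme, obtaining the convergence statement at the same time. Since $[0,1]^d$ is compact, tightness of $\{\overline R^n\}$ in $D([0,T],[0,1]^d)$ reduces to an oscillation control that I would get from the discrete generators via an Aldous-type criterion. For generator convergence I would write $\mathcal{A}_n^{(z)}f(r')=n\,\mathbb{E}_{r',z}\bigl[f(R(n^{-1}\wedge\tau))-f(r')\bigr]$ and express the increment through Dynkin's formula as $\mathbb{E}_{r',z}\int_0^{n^{-1}\wedge\tau}\mathcal{A}^{(Z(s))}f(R(s))\,ds$; letting $n\to\infty$, and using that $\tau>n^{-1}$ eventually because $z$ lies strictly inside $(\varepsilon,L)$, together with the continuity of $s\mapsto(R(s),Z(s))$ at $s=0$, yields $\mathcal{A}_n^{(z)}f\to\mathcal{A}^{(z)}f$ uniformly on a core of $C^2([0,1]^d)$ functions. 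The shrinking fluctuations of $Z$ about $z$ are exactly what freeze the total mass in the limit and make it independent of $\varepsilon$ and $L$. By the Ethier--Kurtz correspondence between generator convergence and convergence of Markov processes, every subsequential limit of $\overline R^n$ solves the martingale problem for $\mathcal{A}^{(z)}$, hence is a weak solution of \eqref{eq:sde_d_colonies}.

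Combining this weak existence with the pathwise uniqueness above, the Yamada--Watanabe theorem in its jump-SDE form produces the unique strong solution $R^{(z,r)}$ and forces all subsequential limits to agree, upgrading tightness to the full convergence $\overline R^n\Rightarrow R^{(z,r)}$. I would separately confirm that the solution never leaves $[0,1]^d$: the diffusion coefficient degenerates on the faces $R_i\in\{0,1\}$, the up-jumps $u_i(1-R_i)$ vanish when $R_i=1$ and the down-jumps $-u_iR_i$ vanish when $R_i=0$, so each coordinate is confined to $[0,1]$, which also justifies dropping the indicator noted in the footnote to \eqref{eq:sde_d_colonies}. Finally, for the Feller property I would apply the same $\phi_k$-estimate to two solutions started at $r$ and $r'$ under a common noise to obtain a continuous-dependence bound of the form $\mathbb{E}|R^{(z,r)}(t)-R^{(z,r')}(t)|\le C(t)|r-r'|$; together with the c\`adl\`ag paths and the compactness of $[0,1]^d$ this gives both $P_tC([0,1]^d)\subseteq C([0,1]^d)$ and $P_tf\to f$ as $t\downarrow0$, i.e. the Feller property.
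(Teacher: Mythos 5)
There is a genuine gap in your pathwise uniqueness argument, and it sits exactly at the point you flagged as the main obstacle. You claim that the compensated jump contributions are ``dominated by $C\,\mathbb{E}|R-\widetilde R|$'' because the amplitudes are Lipschitz in $r$ and the indicators contribute $z_i|R_j-\widetilde R_j|$ after integrating in $v$. The constant $C$ this produces is proportional to $\int_{[0,1)^d\setminus\{0\}} u_i\,\mathbf{T}_z\mu_j(du)$. For the cross terms $j\neq i$ this is finite, because \eqref{eq:integrability_conditions_mu} controls \emph{first} moments of the off-diagonal coordinates of $\mu_j$. But for the own-type terms $j=i$, condition \eqref{eq:integrability_conditions_mu} only guarantees $\int u_i^2\,\mathbf{T}_z\mu_i(du)<\infty$; the first moment $\int u_i\,\mathbf{T}_z\mu_i(du)$ may be infinite (stable-type branching mechanisms of infinite variation are exactly the interesting examples). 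So for $j=i$ your $L^1$ bound gives $C=\infty$ and Gronwall cannot be applied. The repair is to treat the diagonal jump terms the same way you treat the Brownian term: apply the second-order difference $D_y\phi_k(x)=\phi_k(x+y)-\phi_k(x)-y\phi_k'(x)$ together with the property $|x|\,\phi_k''(x)\le 2/k$, noting that the squared difference of the jump amplitudes is bounded by a multiple of $u_i^2\,|\zeta_i|$, so that the diagonal contribution is of order $k^{-1}\int u_i^2\,\mathbf{T}_z\mu_i(du)$ and vanishes as $k\to\infty$; only the cross terms are fed into Gronwall. This is precisely what the paper does in \eqref{eq:phi_pprime} and \eqref{eq:D_mui}, following Lemma 3.1 of Li and Pu \cite{liStrongSolutionsJumptype2012}. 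With this correction your uniqueness proof coincides with the paper's.

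The rest of your plan is sound and structurally close to the paper's: the same $\phi_k$ scheme, the same Gronwall estimate recycled as a continuous-dependence bound to get the Feller property, and Yamada--Watanabe to upgrade weak to strong existence. The one place you genuinely diverge is existence: you extract weak existence from tightness and generator convergence of the sequential sampling scheme itself, whereas the paper constructs a weak solution separately --- truncating the jump measures to finite intensity over the sets $W_n$, $V_n$, solving a continuous-type equation between jumps, interlacing, and passing to the limit via Aldous' criterion and generator convergence --- and then handles the convergence of $\overline R^n$ by adapting the one-dimensional argument of \cite{caballeroRelativeFrequencyTwo2023}. Your merged route is economical and legitimate, but it needs one explicit bridge: the equivalence between solving the martingale problem for $\mathcal{A}^{(z)}$ and being a weak solution of \eqref{eq:sde_d_colonies} (the paper obtains this from a generalization of Proposition 4.1 of \cite{liStrongSolutionsJumptype2012}); without it, ``subsequential limits solve the martingale problem'' does not yet yield the weak solution that Yamada--Watanabe requires.
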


This result will be proved in Section \ref{sec:proof} below.

\subsection{Main results: Duality and homeomorphism}
\label{subsec:mainResults}

From the SDE \eqref{eq:sde_d_colonies} and Theorem \ref{theo:culling_limit} it is straightforward to check that the generator of the sequentially sampled frequency process $R^{(z,r)}$ is given by \eqref{eq:gen_d_colonies}, and by standard methods one can derive a dual process. 
See Section \ref{sec:proof} for details.

\begin{theorem} \label{theo:block_duality}
    Fix $z=(z_1,...,z_d)\in(0,\infty)^d$. 
    For any $r \in [0, 1]^d$, every $n \in \mathbb{N}_0^d$ and any $t > 0$ we have 
\begin{equation}
        \mathbb{E}_r \Bigl[ \prod_{i = 1}^d (R_i^{(z,r)}(t))^{n_i} \Bigr] 
        = \mathbb{E}_n \Bigl[ \prod_{i = 1}^d r_i^{N_i(t)} \Bigr] ,
\end{equation}
    where $N$ is a Markov process on $\mathbb{N}_0^d$ and transition rates (for $n,m\in \mathbb{N}_0^d$)
    \begin{equation}\label{eq:rates_app_dual}
        q_{nm} = \begin{dcases*}
             2 \frac{c_i}{z_i} \binom{n_i}{2} 
            & if $m = n - 2 e_i + e_i$, \\
            n_j b_{ji} \frac{z_i}{z_j}
            & if $m = n - e_j + e_i$, \\
            z_i \int_{[0, 1)^d \setminus \{0\}} \binom{n}{k} u^k (1 - u)^{n - k} 
            \mathbf{T}_z \mu_i(du)  
            & if $m = n - k + e_i$ with $k \in [n]_0 \setminus \{0, e_i\}$, \\
            0 & otherwise.  
        \end{dcases*}    
    \end{equation}
\end{theorem}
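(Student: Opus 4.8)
The plan is to establish the stated identity by proving a \emph{generator duality} for the bounded duality function $H(r,n) := r^n = \prod_{i=1}^d r_i^{n_i}$ and then invoking a standard moment-duality theorem (as in Ethier and Kurtz, or the survey of Jansen and Kurt) to pass from the infinitesimal to the process level. Writing $\mathcal{G}$ for the generator of the candidate dual $N$, namely $\mathcal{G}g(n)=\sum_{m}q_{nm}\bigl(g(m)-g(n)\bigr)$ with $q_{nm}$ as in \eqref{eq:rates_app_dual}, the crux is the pointwise identity
\[
  \mathcal{A}^{(z)}H(\cdot,n)(r) \;=\; \mathcal{G}H(r,\cdot)(n)\,, \qquad r\in[0,1]^d,\ n\in\mathbb{N}_0^d,
\]
where $\mathcal{A}^{(z)}$ is the generator \eqref{eq:gen_d_colonies} of $R^{(z,r)}$. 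Since $H(\cdot,n)$ is a polynomial it lies in the domain of $\mathcal{A}^{(z)}$, and by Theorem~\ref{theo:culling_limit} the process $R^{(z,r)}$ is Feller; these are the ingredients needed to upgrade the generator identity to the process-level duality.

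For the first two terms of \eqref{eq:gen_d_colonies} the computation is direct. Using $\partial_i H(\cdot,n)(r)=n_i r^{n-e_i}$ and $\partial_{ii}H(\cdot,n)(r)=n_i(n_i-1)r^{n-2e_i}$, the diffusion term yields $\sum_i \tfrac{c_i}{z_i} n_i(n_i-1)\bigl(r^{n-e_i}-r^n\bigr)$, which matches the first line of \eqref{eq:rates_app_dual} since $n_i(n_i-1)=2\binom{n_i}{2}$ and $n-e_i=n-2e_i+e_i$. Likewise the migration/drift term produces $\sum_i\sum_{j\neq i} b_{ij}\tfrac{z_j}{z_i} n_i\bigl(r^{n-e_i+e_j}-r^n\bigr)$, which after the relabelling $(i,j)\mapsto(j,i)$ is exactly the second line of \eqref{eq:rates_app_dual}.

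The heart of the argument, and the step I expect to be the main obstacle, is the treatment of the two jump integrals in \eqref{eq:gen_d_colonies}. Here one uses the multi-index binomial expansions
\[
  H\bigl(r+(1-r)\odot u,\,n\bigr)=\sum_{k\in[n]_0}\binom{n}{k}u^k(1-u)^{n-k}r^{n-k}, \qquad H\bigl(r-r\odot u,\,n\bigr)=r^n(1-u)^n,
\]
which follow from $r_l(1-u_l)+u_l$ and $r_l(1-u_l)$ being the $l$-th coordinates of the two arguments. Substituting these into the two integral terms, the compensator contributions proportional to $n_i u_i$ cancel between the two integrals, and after isolating the $k=0$ and $k=e_i$ summands and using the normalisation $\sum_{k\in[n]_0}\binom{n}{k}u^k(1-u)^{n-k}=1$, the remaining expression collapses to
\[
  \sum_{i=1}^d z_i \sum_{k\in[n]_0\setminus\{0,e_i\}} \int_{[0,1)^d\setminus\{0\}} \binom{n}{k}u^k(1-u)^{n-k}\,\mathbf{T}_z\mu_i(du)\,\bigl(r^{n-k+e_i}-r^n\bigr),
\]
which is precisely the contribution of the third line of \eqref{eq:rates_app_dual}, completing the generator identity. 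The delicate bookkeeping is the cancellation of the non-integrable $k=e_i$ direction against the compensator term $(1-r_i)u_i\partial_i f$, which is the structural reason $k=e_i$ is excluded from the jump rates.

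It remains to check that $N$ is a genuine (non-explosive) Markov process and that the abstract duality theorem applies. Non-explosion is immediate because every admissible transition satisfies $|m|\le|n|$ (migration preserves $|n|$, while the diffusive and multiple-merger transitions decrease it by $|k|-1\ge 0$), so $N$ started from $n$ is confined to the finite set $\{m:|m|\le|n|\}$; finiteness of each rate $q_{nm}$ follows from \eqref{eq:integrability_conditions_mu}, since under $T_z$ one has $u_j\sim w_j/z_j$ and $u_i^2\sim (w_i/z_i)^2$ near the origin, so that $u^k$ is integrated against $\mathbf{T}_z\mu_i$ for every $k\neq e_i$. With $H$ bounded by $1$, $R^{(z,r)}$ Feller by Theorem~\ref{theo:culling_limit}, and $N$ a bounded-rate jump process on a finite reachable state space, the generator identity upgrades to the claimed moment duality for all $t>0$.
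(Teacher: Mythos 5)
Your proposal is correct and follows essentially the same route as the paper: both compute $\mathcal{A}^{(z)}$ applied to $H(r,n)=r^n$, reduce the two jump integrals via the multi-index binomial identity (with the compensators cancelling and the $k=0$, $k=e_i$ terms dropping out) to obtain exactly the rates \eqref{eq:rates_app_dual}, and then upgrade the generator identity to process-level duality by a standard argument. The only difference is presentational: the paper delegates the final upgrade to a modification of Theorem 4 in \cite{caballeroRelativeFrequencyTwo2023}, whereas you verify the needed hypotheses (boundedness of $H$, the Feller property from Theorem \ref{theo:culling_limit}, finiteness of the rates, and non-explosion of $N$ via $|m|\leq|n|$) explicitly.
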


Observe that for fixed $z\in\mathbb{R}_+^d,$ the process $N=(N(t))_{t\geq 0}$ is exactly the block counting process of a multitype $\Lambda$-coalescent in the sense of \cite{johnstonMultitypeLcoalescents2022a}.
    If there are currently $n_i$ blocks of type $i$, there are pairwise mergers of blocks of type $i$ at rate $2c_iz_i^{-1},$  type switch from $j$ to $i$ of single blocks at rate $b_{ji} z_i z_j^{-1},$ multiple mergers of blocks at rates governed by $\mathbf{T}_z\mu_i$.  Thus the duality provides both an a posteriori justification of the sequentially sampled process as an object related to the original population model, and a nice connection between multitype CSBPs and multitype $\Lambda$-coalescents generalising previous results, \cite{birknerAlphaStableBranchingBetaCoalescents2005,caballeroRelativeFrequencyTwo2023,johnstonCoalescentStructureUniform2021}.  This connection is strenghtened by our next result, which shows that the parameter sets of multitype $\Lambda$-coalescents and multitype CSBPs are in a one to one correspondence, which is even a homeomorphism for topologies on these spaces that are natural in the sense of convergence of the corresponding processes.

\medskip

In order to state the homeomorphism, we need to define suitable topologies on the parameter spaces of the coalescents and the CSBPs. 
This is rather straightforward in the case of the multitype $\Lambda$-coalescents. 
Write $\mathcal{M}^{\mathrm{coal}}$ for the space of vectors $(\mathcal{Q}_1, \ldots, \mathcal{Q}_d)$ of measures on $[0, 1]^d$ with no mass on $\{0\}$ that satisfy 
\begin{equation}
    \sum_{i = 1}^d \int_{[0, 1]^d \setminus \{0\}} \sum_{j = 1}^d u_j^{1 + \delta_{ij}} \mathcal{Q}_i(du) < \infty \,. 
\end{equation}
The characterizing pair \eqref{eq:charact_1_mtlc} of a multitype $\Lambda$-coalescent takes values in $\mathbf{L}^{\mathrm{coal}} := \mathbb{R}_+^{d \times d} \times \mathcal{M}^{\mathrm{coal}}$. 
The topology on $\mathbb{R}_+^{d \times d}$ we will be the usual one, given by the Frobenius norm defined by $\lVert A \rVert_{F} := \bigl( \sum_{i = 1}^d \sum_{j = 1}^d \lvert a_{ij} \rvert^2 \bigr)^{1/2}$. 
In the case of $\mathcal{M}^{\mathrm{coal}}$, equip it with the topology given by the weak convergence topology of the measures $u_j^{1 + \delta_{ij}} \mathcal{Q}_{i}(du)$ for each $i, j \in [d]$. 
This is, in the specified topology, $(\mathcal{Q}_1^{(n)}, \ldots, \mathcal{Q}_d^{(n)}) \to (\mathcal{Q}_1, \ldots, \mathcal{Q}_d)$ as $n \to \infty$ if and only if $u_j^{1 + \delta_{ij}} \mathcal{Q}_i^{(n)}(du) \to u_j^{1 + \delta_{ij}} \mathcal{Q}_i(du)$ weakly, as $n \to \infty$, for any $i, j \in [d]$. 
Finally, consider the product topology on $\mathbf{L}^{\mathrm{coal}}$. 
We will show below that convergence in this space implies weak convergence of the corresponding multitype $\Lambda$-coalescents in the Skorokhod space of c\`adl\`ag functions. 

For the parameter space of multitype CSBPs we need to be a little more careful. 
Recall from section \ref{sec:notation} that we can characterize the CSBPs by the tuple $(B, c, \mu)$, where $\mu = (\mu_1, \ldots, \mu_d)$ is a vector of measures over $U_d$. 
We follow the assumptions on $B$, $c$ and $\mu$ established in section \ref{sec:notation}. 
Writing $\mathcal{M}^{\mathrm{branch}}$ for the set of tuples of measures $\mu = (\mu_1, \ldots, \mu_d)$ over $U_d$ such that for any $i \in [d]$, $\mu_i$ satisfies \eqref{eq:integrability_conditions_mu}. We note that $(B, c, \mu) \in \mathbf{\Psi}^{\mathrm{branch}} :=  \mathbb{R}_{(+)}^{d \times d} \times \mathbb{R}_+^d \times \mathcal{M}^{\mathrm{branch}}$. 
In $\mathbb{R}_{(+)}^{d \times d}$ and $\mathbb{R}_+^d$ we consider the usual topologies defined by the Frobenius and $L_2$ norm respectively. 
On the other hand, we equip $\mathcal{M}^{\mathrm{branch}}$ with the topology of weak convergence of the measures $(1 \wedge w_j)^{1 + \delta_{ij}} \mu_i(dw)$ for each $i, j \in [d]$. 
Similar to the topology used for $\mathcal{M}^{\mathrm{coal}}$, this means that $\mu^{(n)} \to \mu$ in $\mathcal{M}^{\mathrm{branch}}$, as $n \to \infty$, if and only if $(1 \wedge w_j)^{1 + \delta_{ij}} \mu_i^{(n)}(dw) \to (1 \wedge w_j)^{1 + \delta_{ij}} \mu_i(dw)$ weakly as $n \to \infty$. 
We equip $\mathbf{\Psi}^{\mathrm{branch}}$ with the product topology. 
In this case we will prove that convergence in this space implies the convergence of the corresponding CSBPs in the Skorokhod space of c\`adl\`ag functions. 

Note that by applying the sequential sampling procedure at the total mass level $z \in (0, \infty)^d$ to the process $(R, Z)$, we get that the infinitesimal generator of the resulting frequency process $\overline{R}$ is given by \eqref{eq:gen_d_colonies} and its moment dual $N$ is described in Theorem \ref{theo:block_duality}. 
The diagonal of the matrix $B$ plays no role in the rates of the dual process $N$, so for the homeomorphism we will consider a subspace of $\mathbf{\Psi}^{\mathrm{branch}}$. 
For $a \in \mathbb{R}^d$ define 
\begin{equation}
    \mathbf{\Psi}_a^{\mathrm{branch}} := \bigl\{ (B, c, \mu) \in \mathbf{\Psi}^{\mathrm{branch}} : b_{ii} = a_i \text{ for all } i \in [d] \bigr\}\,,
\end{equation}
and equip it with the subspace topology. 
Additionally, note that because $z \in (0, \infty)^d$ and the definition of $T_z$, as in \eqref{fun_T}, $\mathbf{T}_z \mu_i([0, 1]^d \setminus [0, 1)^d) = 0$ for all $i \in [d]$. 
Thus, we will also consider the subspace 
\begin{equation}
    \mathbf{L}_{\mathrm{prop}}^{\mathrm{coal}} := 
    \Bigl\{ (\rho, \mathcal{Q}) \in \mathbf{L}^{\mathrm{coal}} : \mathcal{Q}_i\bigl([0, 1]^d \setminus [0, 1)^d\bigr) = 0 \text{ for all } i \in [d] \Bigr\}
\end{equation}
of $\mathbf{L}^{\mathrm{coal}}$ equipped with the subspace topology. 

\begin{theorem} \label{theo:homeomorphismSpaces}
    Consider $a \in \mathbb{R}^d$ fixed. 
    The spaces $\mathbf{\Psi}_a^{\mathrm{branch}}$ and $\mathbf{L}_{\mathrm{prop}}^{\mathrm{coal}}$ are homeomorphic.  More precisely, given $z \in (0, \infty)^d$, we can define a explicit homeomorphism $H_z : \mathbf{\Psi}_a^{\mathrm{branch}} \to \mathbf{L}_{\mathrm{prop}}^{\mathrm{coal}}$ that maps a multitype CSBP with triplet $(B, c, \mu)$ to the mutitype $\Lambda$-coalescent with pair $(\rho, \mathcal{Q})$ given as
\begin{equation}
        \rho_{ii} = 2 \frac{c_i}{z_i}, \quad 
        \rho_{ij} = b_{ji} \frac{z_i}{z_j} 
        \quad\text{and}\quad 
        \mathcal{Q}_i = z_i \mathbf{T}_z \mu_i 
        \quad\text{for } i \in [d] \text{ and } j \in [d] \setminus \{i\}.
\end{equation}
    Its inverse $H_z^{-1} : \mathbf{L}_{\mathrm{prop}}^{\mathrm{coal}} \to \mathbf{\Psi}_{a}^{\mathrm{branch}}$ maps a pair $(\rho', \mathcal{Q}')$ to $(B', c', \mu')$ with 
\begin{equation}
        c_i' = z_i \frac{\rho_{ii}'}{2}, \quad 
        b_{ij}' = \rho_{ji}' \frac{z_i}{z_j} \quad\text{and}\quad 
        \mu_i' = \frac{1}{z_i} \mathbf{T}_z^{-1} \mathcal{Q}_i' 
        \quad\text{for } i \in [d] \text{ and } j \in [d] \setminus \{i\}.
\end{equation}
\end{theorem}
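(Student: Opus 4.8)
The plan is to exploit the product structure of both parameter spaces, treating the finite-dimensional drift/diffusion coordinates separately from the measure coordinates, the latter being the only nontrivial part. Since each $z_i>0$, the assignments $\rho_{ii}=2c_i/z_i$ and $\rho_{ij}=b_{ji}z_i/z_j$ ($i\neq j$) are linear isomorphisms: one between the $c$-coordinates in $\mathbb{R}_+^d$ and the diagonal of $\rho$, the other between the off-diagonal entries of $B$ and of $\rho$ (a transposition followed by a positive rescaling). These are manifestly homeomorphisms for the Frobenius and $L_2$ topologies, with inverses exactly the stated $c_i'=z_i\rho_{ii}'/2$ and $b_{ij}'=\rho_{ji}'z_i/z_j$. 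I would emphasise here that the restriction to $\mathbf{\Psi}_a^{\mathrm{branch}}$ is what makes this a bijection: the diagonal $b_{ii}$ does not appear in \eqref{eq:gen_d_colonies} (the drift term vanishes on $r_i-r_i$) and has no image on the coalescent side, so fixing $b_{ii}=a_i$ removes the only redundant coordinate.

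Next I would record that $T_z$ in \eqref{fun_T} is a homeomorphism of $U_d$ onto $[0,1)^d\setminus\{0\}$, since each coordinate map $w_i\mapsto w_i/(w_i+z_i)$ is a strictly increasing homeomorphism of $[0,\infty)$ onto $[0,1)$, with explicit inverse $T_z^{-1}(u)=(z_iu_i/(1-u_i))_i$. Hence $\mathbf{T}_z$ and the $z_i^{-1}$-scaled $\mathbf{T}_z^{-1}$ are mutually inverse operations on measures, giving the set-theoretic bijection $\mathcal{Q}_i=z_i\mathbf{T}_z\mu_i\Leftrightarrow\mu_i=z_i^{-1}\mathbf{T}_z^{-1}\mathcal{Q}_i$; together with the finite-dimensional bijections this shows $H_z$ and $H_z^{-1}$ are inverse to one another.

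The core computation is the change of variables $\int g(u)\,u_j^{1+\delta_{ij}}\mathcal{Q}_i(du)=z_i\int g(T_z(w))\,(T_z(w))_j^{1+\delta_{ij}}\mu_i(dw)$, together with the decisive identity $(T_z(w))_j^{1+\delta_{ij}}=\phi_{ij}(w)\,(1\wedge w_j)^{1+\delta_{ij}}$, where $\phi_{ij}(w)=\bigl(\tfrac{w_j/(w_j+z_j)}{1\wedge w_j}\bigr)^{1+\delta_{ij}}$ extends to a continuous function on $U_d$ that is bounded above and bounded below by strictly positive constants (its values interpolate between the limit $(1/z_j)^{1+\delta_{ij}}$ as $w_j\to0$ and $1$ as $w_j\to\infty$). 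This yields well-definedness at once: on $U_d$ one has $\xi_j(w)=1\wedge w_j$, so the coalescent integrability condition matches \eqref{eq:integrability_conditions_mu} term by term, and the bounded factors $\phi_{ij}$, $\phi_{ij}^{-1}$ transfer finiteness in both directions; moreover $\mathcal{Q}_i$ inherits the absence of an atom at $0$ and, since $T_z$ maps into $[0,1)^d$, the support property defining $\mathbf{L}_{\mathrm{prop}}^{\mathrm{coal}}$.

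For continuity I would show the defining convergences correspond. Writing $\nu_{ij}^{(n)}:=(1\wedge w_j)^{1+\delta_{ij}}\mu_i^{(n)}(dw)$ and $\sigma_{ij}^{(n)}:=u_j^{1+\delta_{ij}}\mathcal{Q}_i^{(n)}(du)$, the identity above reads $\sigma_{ij}^{(n)}=(T_z)_*\bigl(z_i\phi_{ij}\,\nu_{ij}^{(n)}\bigr)$. The forward direction is then immediate: if $\nu_{ij}^{(n)}\to\nu_{ij}$ weakly, multiplying by the bounded continuous $z_i\phi_{ij}$ preserves weak convergence, and pushing forward by the continuous $T_z$ preserves it again (for $g\in C_b([0,1]^d)$ one has $g\circ T_z\in C_b(U_d)$), so $\sigma_{ij}^{(n)}\to\sigma_{ij}$. \emph{The hard part is the reverse direction}: $T_z^{-1}$ maps into the non-compact $U_d$, sending the boundary $\{u_k=1\}$ to infinity, so a priori mass of $\sigma_{ij}^{(n)}$ accumulating near that boundary could escape and destroy weak convergence on $U_d$. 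This is exactly where the prop condition enters: since the limit $\sigma_{ij}$ is supported in $[0,1)^d$, applying the Portmanteau theorem to the open sets $\{\max_k u_k<1-\eta\}$ and comparing with the converging total masses gives $\lim_{\eta\to0}\limsup_n\sigma_{ij}^{(n)}(\{\max_k u_k\geq 1-\eta\})=0$, i.e.\ no escaping mass; splitting each test integral into a compact core (where uniform continuity and weak convergence on $[0,1]^d$ apply) and a uniformly negligible boundary layer then yields $\nu_{ij}^{(n)}\to\nu_{ij}$. Assembling the factors through the product topologies gives bicontinuity of $H_z$ and completes the proof.
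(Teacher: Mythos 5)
Your proposal is correct and follows essentially the same route as the paper's own proof (its Lemma on $H_z$): the same coordinate-wise linear identifications for $(B,c)\leftrightarrow\rho$, the same change of variables through the homeomorphism $T_z:U_d\to[0,1)^d\setminus\{0\}$, and the same use of the condition $\mathcal{Q}_i'\bigl([0,1]^d\setminus[0,1)^d\bigr)=0$ at exactly the same point to tame the boundary in the continuity of $H_z^{-1}$. The only differences are organizational: you package the paper's two one-sided integrability estimates into a single two-sided bounded comparison factor $\phi_{ij}$, and where the paper invokes weak convergence against bounded, $\mathcal{Q}_i'$-a.e.\ continuous integrands, you prove that fact by hand via Portmanteau and a boundary-layer splitting.
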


This result is a multidimensional extension of Theorem 5 in \cite{caballeroRelativeFrequencyTwo2023}, which establishes that the space of $\Lambda$-coalescents with no atom at $\{1\}$ is homeomorphic to the space of CSBPs. The remainder of the paper is organised as follows. We will first discuss in some more details the homeomorphism, providing some insight into the choice of topologies and prove Theorem \ref{theo:homeomorphismSpaces}. The proof of the convergence result for the sequential sampling scheme (Theorem \ref{theo:culling_limit}) and the duality (Theorem \ref{theo:block_duality}) will be provided after that in Section \ref{sec:proof},  since those proofs are more straightforward, even though quite technical.


\section{Proof and discussion of the homeomorphism}\label{sect:homeo}
We now present the proof of Theorem \ref{theo:homeomorphismSpaces}.  
The proof is structured as follows: First, we establish a lemma stating that the function $H_z$, as defined in Theorem \ref{theo:homeomorphismSpaces}, is a homeomorphism between $\mathbf{\Psi}_a^{\mathrm{branch}}$ and $\mathbf{L}_{\mathrm{prop}}^{\mathrm{coal}}$. 
Next, we discuss the spaces of multitype $\Lambda$-coalescents and multitype CSBPs, showing that the topologies we defined on the parameter spaces are natural, in the sense that convergence in the parameter space imply convergence of the corresponding processes.
Finally, we combine these results to complete the proof of Theorem \ref{theo:homeomorphismSpaces}.
\subsection{Homeomorphism between parameter spaces of CSBPs and multitype \texorpdfstring{$\mathbf{\Lambda}$}{L}-coalescents}
From Theorems \ref{theo:culling_limit} and \ref{theo:block_duality} we know that the block counting process of a multitype $\Lambda$-coalescent is the moment dual of the frequency process associated to a multitype CSBP.  
The homeomorphism that we give in Theorem \ref{theo:homeomorphismSpaces} essentially sends a CSBP to the multitype $\Lambda$-coalescent in moment duality to the frequency process obtained through the sequential sampling procedure, which can be interpreted as describing the genealogy of the multitype CSBP.

Specifically, recall that the transition rates of the moment dual to the sequentially sampled frequency process of a multitype CSBP $X$ with characteristic triplet $(B, c, \mu)$ are given by \eqref{eq:rates_app_dual}. 
Fix $z \in (0, \infty)^d$. 
Note that for each $n \in \mathbb{N}_0^d$, each $i \in [d]$ and each $k \in [n]_0 \setminus \{0, e_i\}$ we can rewrite $q_{nm}$, with $m = n - k + e_i$, in \eqref{eq:rates_app_dual} as 
\begin{equation}
    q_{nm} = \binom{n}{k} \biggl( \frac{2}{z_i} c_i 1_{\{k = 2e_i\}} 
    + \sum_{j \in [d] \setminus \{i\}} b_{ji} \frac{z_i}{z_j} 1_{\{k = e_j\}} 
    + \int_{[0, 1)^d \setminus \{0\}} u^k (1 - u)^{n - k} z_i \mathbf{T}_z \mu_i(du) \biggr) .
\end{equation}
Notice that the expression within parentheses is a particular case of \eqref{eq:lambda_i_multitype_coal}, where 
\begin{equation} \label{eq:coalFromBranch}
    \rho_{ii} = 2 \frac{c_i}{z_i},\ 
    \rho_{ij} = b_{ji} \frac{z_i}{z_j} 
    \text{ and } 
    \mathcal{Q}_i(du) = z_i \mathbf{T}_z \mu_i(du)
\end{equation}
for each $i \in [d]$ and $j \in [d] \setminus \{i\}$. 

By the previous remark, for a given $z \in (0, \infty)^d$, we map a CSBP $X$ with characteristic triplet $(B, c, \mu) \in \mathbf{\Psi}_a^{\mathrm{branch}}$ to a multitype $\Lambda$-coalescent where the pair $(\{\rho_{ij} : i, j \in [d]\}, \{\mathcal{Q}_i : i \in [d]\}) \in \mathbf{L}_{\mathrm{prop}}^{\mathrm{coal}}$ is given by \eqref{eq:coalFromBranch}. 
This is the underlying idea behind the homeomorphism $H_z$ established in Theorem \ref{theo:homeomorphismSpaces}. 
We now state and prove that $H_z$ is a homeomorphism between the parameter spaces $\mathbf{\Psi}_a^{\mathrm{branch}}$ and $\mathbf{L}_{\mathrm{prop}}^{\mathrm{coal}}$.

    \begin{lemma} \label{lem:homeomorphismHz}
        Let $a \in \mathbb{R}^d$ be fixed and $z \in (0, \infty)^d$ given. 
        The function $H_z : \mathbf{\Psi}_{a}^{\mathrm{branch}} \to \mathbf{L}_{\mathrm{prop}}^{\mathrm{coal}}$ as defined in \eqref{eq:coalFromBranch} is a homeomorphism between $\mathbf{\Psi}_{a}^{\mathrm{branch}}$ and $\mathbf{L}_{\mathrm{prop}}^{\mathrm{coal}}$ whose inverse $H_z^{-1}$ is given explicitly in \eqref{eq:inverseHz}.
    \end{lemma}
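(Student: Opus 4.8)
The plan is to exploit the product structure of both parameter spaces and treat $H_z$ coordinate by coordinate. The finite-dimensional coordinates are the easy part: the assignments $c_i \mapsto 2c_i/z_i$ and $b_{ji}\mapsto b_{ji}\,z_i/z_j$ (for $i\neq j$) are positive rescalings composed with a transposition of indices, hence linear bijections of $\mathbb{R}_+^d$ and of the off-diagonal entries of $\mathbb{R}_{(+)}^{d\times d}$ respectively, continuous together with their inverses; the diagonal entries $b_{ii}=a_i$ are held fixed and enter neither \eqref{eq:coalFromBranch} nor the inverse. So everything reduces to showing that $\mu_i\mapsto \mathcal{Q}_i = z_i\mathbf{T}_z\mu_i$, with inverse $\mathcal{Q}_i'\mapsto \mu_i' = z_i^{-1}\mathbf{T}_z^{-1}\mathcal{Q}_i'$, is a homeomorphism from $\mathcal{M}^{\mathrm{branch}}$ onto the ``proper'' part of $\mathcal{M}^{\mathrm{coal}}$, for the weak topologies carried by the weighted measures $(1\wedge w_j)^{1+\delta_{ij}}\mu_i(dw)$ and $u_j^{1+\delta_{ij}}\mathcal{Q}_i(du)$.

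First I would record the geometry of $T_z$. Each coordinate map $w_j\mapsto w_j/(w_j+z_j)$ is a smooth strictly increasing bijection of $[0,\infty)$ onto $[0,1)$, with inverse $u_j\mapsto z_j u_j/(1-u_j)$, so $T_z$ is a diffeomorphism of $U_d$ onto $[0,1)^d\setminus\{0\}$. In particular $\mathbf{T}_z\mu_i$ is automatically supported off the faces $\{u_j=1\}$, which is exactly membership in $\mathbf{L}_{\mathrm{prop}}^{\mathrm{coal}}$, and combining the pushforward with the scaling by $z_i$ (resp. $z_i^{-1}$) yields the claimed bijection. The crucial analytic input is the comparison of weights: writing $u=T_z(w)$, one has
\[
  u_j^{1+\delta_{ij}} = \Bigl(\frac{w_j}{w_j+z_j}\Bigr)^{1+\delta_{ij}} = g_{ij}(w)\,(1\wedge w_j)^{1+\delta_{ij}}, \qquad g_{ij}(w) := \Bigl(\frac{w_j/(w_j+z_j)}{1\wedge w_j}\Bigr)^{1+\delta_{ij}}.
\]
Splitting at $w_j=1$ shows $g_{ij}$ depends only on $w_j$, is continuous and strictly positive on $U_d$, extends continuously to $w_j=\infty$ (where it equals $1$), and is therefore bounded above and below by positive constants. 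The change of variables $u=T_z(w)$ then gives, for every bounded measurable $f$, the identity $\int f(u)\,u_j^{1+\delta_{ij}}\,\mathcal{Q}_i(du) = z_i\int f(T_z(w))\,g_{ij}(w)\,(1\wedge w_j)^{1+\delta_{ij}}\,\mu_i(dw)$; taking $f\equiv 1$ already shows that the coalescent integrability condition holds for $\mathcal{Q}_i$ if and only if the branching condition \eqref{eq:integrability_conditions_mu} holds for $\mu_i$, which completes the proof that $H_z$ is a well-defined bijection onto $\mathbf{L}_{\mathrm{prop}}^{\mathrm{coal}}$ with the inverse stated in Theorem \ref{theo:homeomorphismSpaces}.

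For continuity I would transfer both weak-convergence statements through the displayed identity. To make this rigorous I regard the weighted measures as finite measures on the compact spaces $[0,\infty]^d$ and $[0,1]^d$, between which $T_z$ extends to a homeomorphism $\bar T_z$ (with $w_j=\infty$ sent to $u_j=1$ and the origin fixed). If $\mu^{(n)}\to\mu$ in $\mathcal{M}^{\mathrm{branch}}$ and $f\in C([0,1]^d)$, then $(f\circ\bar T_z)\cdot g_{ij}\in C([0,\infty]^d)$, so the identity and weak convergence of $(1\wedge w_j)^{1+\delta_{ij}}\mu_i^{(n)}(dw)$ yield convergence of $\int f\,u_j^{1+\delta_{ij}}\mathcal{Q}_i^{(n)}(du)$, i.e. $H_z$ is continuous. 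The argument for $H_z^{-1}$ is entirely symmetric: for $\psi\in C([0,\infty]^d)$ the test function $(\psi\circ\bar T_z^{-1})\cdot\bigl(g_{ij}\circ\bar T_z^{-1}\bigr)^{-1}$ lies in $C([0,1]^d)$, since $g_{ij}$ is bounded away from $0$, and the reciprocal weight is again continuous and bounded.

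The main obstacle is topological rather than computational: the raw spaces $U_d$ and $[0,1]^d\setminus\{0\}$ are not compact, so one must ensure that no mass escapes, either toward the origin or toward the faces $\{u_j=1\}$ (equivalently $w_j\to\infty$), when passing to the limit. This is precisely what the two features isolated above control---that $\bar T_z$ is a homeomorphism of the natural compactifications $[0,\infty]^d\cong[0,1]^d$, so that $f\circ\bar T_z$ remains a legitimate continuous test function, and that $g_{ij}$ stays bounded away from $0$ and $\infty$ and extends continuously to the added boundary, so that the reweighted test functions develop no singularities there. The vanishing of the weight $(1\wedge w_j)^{1+\delta_{ij}}$ at the origin likewise prevents accumulation of mass at $0$. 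Once these facts are in place, both continuity statements are immediate applications of the definition of weak convergence, and the lemma follows.
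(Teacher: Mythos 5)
Your construction is, at its core, the paper's own proof in different packaging: the identity $u_j^{1+\delta_{ij}} = g_{ij}(w)\,(1\wedge w_j)^{1+\delta_{ij}}$ under $u = T_z(w)$ is exactly the factorisation the paper uses (there written with indicators split at $w_j=1$), your two-sided bound on $g_{ij}$ replaces the paper's two one-sided inequalities for well-definedness and surjectivity, and bijectivity via pushforward along the bijection $T_z$ matches the paper's injectivity/surjectivity computation. Up to and including the continuity of $H_z$ the argument is correct, and in places cleaner than the paper's.

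The gap is in the continuity of $H_z^{-1}$, where your appeal to symmetry fails: the two sides of the homeomorphism do not carry symmetric topologies. The topology on $\mathcal{M}^{\mathrm{branch}}$ is weak convergence of the weighted measures $(1\wedge w_j)^{1+\delta_{ij}}\mu_i(dw)$ as finite measures on the \emph{non-compact} space $U_d$, i.e.\ tested against all of $C_b(U_d)$ --- and the paper genuinely needs this larger class later: in the proof of Lemma \ref{lem:char_exp_convergence} the test functions involve $\chi_j(w/\lVert w\rVert)$ and functions $f\in C_b(U_d)$ vanishing near $0$, which have no continuous extension to $[0,\infty]^d$. You only verify convergence against $\psi\in C([0,\infty]^d)$, for which the transported function indeed lies in $C([0,1]^d)$. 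For a general $\psi\in C_b(U_d)$, the transported test function $(\psi\circ T_z^{-1})\cdot\bigl(g_{ij}\circ T_z^{-1}\bigr)^{-1}$ is bounded and continuous only on $[0,1)^d\setminus\{0\}$; it may be discontinuous at $\{0\}$ and on the faces $\{u_j=1\}$, so the hypothesis of weak convergence on $[0,1]^d$ cannot be applied to it directly. What you prove is therefore convergence in the a priori weaker compactified (weak-$*$) topology, not in the topology the lemma asserts. The missing step --- precisely the one the paper makes explicit --- is the portmanteau/mapping theorem for bounded, almost everywhere continuous test functions: since the limiting measure $u_j^{1+\delta_{ij}}\mathcal{Q}_i'(du)$ charges neither $\{u_j=0\}$ (the weight vanishes there) nor the faces (this is the defining property of $\mathbf{L}_{\mathrm{prop}}^{\mathrm{coal}}$, and is exactly where that hypothesis enters), weak convergence on $[0,1]^d$ does extend to such test functions. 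The facts you isolate in your final paragraph (boundedness of $g_{ij}$, vanishing weight at $0$, no mass on the faces) are the right ingredients, but they must be routed through this portmanteau argument; they do not make the $H_z^{-1}$ direction an ``immediate application of the definition of weak convergence.''
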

\begin{proof}
    The proof is done in three steps. 
    We first prove that $H_z$ is a well defined function. 
    After that, we show that $H_z$ is a bijection. 
    Finally, we prove that $H_z$ and $H_z^{-1}$ are continuous, i.e. that $H_z$ is a homeomorphism.

    Consider a triplet $(B, c, \mu) \in \mathbf{\Psi}_a^{\mathrm{branch}}$, and let 
    $(\rho, \mathcal{Q})$ be defined by \eqref{eq:coalFromBranch}. 
    Then, it is clear, because $B\in\mathbb{R}_{(+)}^d$, that $\rho \in \mathbb{R}_+^{d \times d}$. 
    On the other hand, for each $i, j \in [d]$, by definition of $\mathcal{Q}_i$, we deduce 
    \begin{align}
        \int_{[0, 1]^d} u_j^{1 + \delta_{ij}} \mathcal{Q}_i(du) 
        & = z_i \int_{U_d} \biggl( \frac{w_j}{z_j + w_j} \biggr)^{1 + \delta_{ij}} \mu_i(dw) \notag\\ 
        & \leq \frac{z_i}{(1 \wedge z_j)^{1+\delta_{ij}}} \int_{U_d} (1 \wedge w_j)^{1 + \delta_{ij}} \mu_i(dw) \,.
    \end{align}
    By condition \eqref{eq:integrability_conditions_mu} it follows that $\mathcal{Q} \in \mathcal{M}^{\mathrm{coal}}$. 
    Moreover, it is straightforward that $\mathcal{Q}_i([0, 1]^d \setminus [0, 1)^d) = 0$ for all $i \in [d]$. 
    As a consequence $(\rho, \mathcal{Q}) \in \mathbf{L}_{\mathrm{prop}}^{\mathrm{coal}}$, so $H_z$ is well defined. 

    Let us prove that $H_z$ is one-to-one. 
    Assume that $H_z(B^{(1)}, c^{(1)}, \mu^{(1)}) = H_z(B^{(2)}, c^{(2)}, \mu^{(2)})$. 
    By \eqref{eq:coalFromBranch} it is immediate that $B^{(1)} = B^{(2)}$ and $c^{(1)} = c^{(2)}$. 
    Now fix a measurable function $f : U_d \to \mathbb{R}_+$. 
    By definition of $\mathbf{T}_z$, for every $i \in [d]$ we get 
    \begin{align}
        \int_{U_d} f(w) \mu_i^{(1)}(dw) 
        & = \int_{[0, 1)^d \setminus \{0\}} f\biggl( \frac{z_1 u_1}{1 - u_1}, \ldots, \frac{z_d u_d}{1 - u_d} \biggr) \mathbf{T}_z \mu_i^{(1)}(du) \notag\\
        & = \frac{1}{z_i} \int_{[0, 1)^d \setminus \{0\}} f\biggl( \frac{z_1 u_1}{1 - u_1}, \ldots, \frac{z_d u_d}{1 - u_d} \biggr) \mathcal{Q}_i^{(1)}(du) \notag\\
        & = \frac{1}{z_i} \int_{[0, 1)^d \setminus \{0\}} f\biggl( \frac{z_1 u_1}{1 - u_1}, \ldots, \frac{z_d u_d}{1 - u_d} \biggr) \mathcal{Q}_i^{(2)}(du) \notag\\
        & = \int_{[0, 1)^d \setminus \{0\}} f\biggl( \frac{z_1 u_1}{1 - u_1}, \ldots, \frac{z_d u_d}{1 - u_d} \biggr) \mathbf{T}_z \mu_i^{(2)}(du) 
        = \int_{U_d} f(w) \mu_i^{(2)}(dw)\,. 
    \end{align}
    Subsequently $\mu_i^{(1)} = \mu_i^{(2)}$ for every $i \in [d]$. 
    This proves that $H_z$ is injective. 

    To show that $H_z$ is surjective consider $(\rho', \mathcal{Q}') \in \mathbf{L}_{\mathrm{prop}}^{\mathrm{coal}}$. 
    Let us define $(B', c', \mu')$ by setting, for $i \in [d]$ and $j \in [d] \setminus \{i\}$, 
    \begin{equation} \label{eq:inverseHz}
        c_i' = \frac{z_i}{2} \rho_{ii}, \ 
        b_{ii}' = a_i, \ 
        b_{ij}' = \frac{z_i}{z_j} \rho_{ji} \text{ and }  
        \mu_i(dw)' = \frac{1}{z_i} \mathbf{T}_z^{-1} \mathcal{Q}_i(dw)\,,
    \end{equation}
    where $\mathbf{T}_z^{-1} \mathcal{Q}_i$ is the pushforward measure of $\mathcal{Q}_i$ under $T_z^{-1}$. 
    We need to prove that $(B, c, \mu) \in \mathbf{\Psi}_a^{\mathrm{branch}}$. 
    By \eqref{eq:inverseHz}, it is obvious that $B' \in \mathbb{R}_{(+)}^{d \times d}$ with $b_{ii} = a_i$ for $i \in [d]$, and that $c \in \mathbb{R}_+^d$. 
    Thus, it suffices to prove that $\mu' \in \mathcal{M}^{\mathrm{branch}}$, for which we will prove that the integrability condition 
    \begin{equation} \label{eq:integrability_conditions_muprime}
        \int_{U_d} \sum_{j = 1}^d (1 \wedge w_j)^{1 + \delta_{ij}} \mu_i'(dw) < \infty 
        \quad\text{holds for each $i \in [d]$.}
    \end{equation}
    This in turn follows from the inequality 
    $1 \wedge ( L v / (1 - v)) \leq (1 + L) v$ valid for any $L > 0$ and 
    $v \in [0, 1)$. 
    Certainly, by applying this inequality, we deduce 
    \begin{align*}
        \int_{[0, 1)^d \setminus \{0\}} \sum_{j = 1}^d \biggl( 1 \wedge \biggl( \frac{z_j u_j}{1 - u_j} \biggr) \biggr)^{1 + \delta_{ij}} \mathcal{Q}_i(du) 
        & \leq \sum_{j = 1}^d (1 + z_j)^{1 + \delta_{ij}} \int_{[0, 1)^d \setminus \{0\}} 
        u_j^{1 + \delta_{ij}} \mathcal{Q}_i(du) < \infty \,.  
    \end{align*}
    Then \eqref{eq:integrability_conditions_muprime} is fulfilled because of the 
    equality 
    \begin{equation}
        \int_{U_d} \sum_{j = 1}^d (1 \wedge w_j)^{1 + \delta_{ij}} \mu_i'(dw) 
        = \frac{1}{z_i} \int_{[0, 1)^d \setminus \{0\}} \sum_{j = 1}^d \biggl( 
            1 \wedge \biggl( \frac{z_j u_j}{1 - u_j} \biggr)
        \biggr)^{1 + \delta_{ij}} \mathcal{Q}_i(du) .
    \end{equation}
    In light of this $H_z$ is onto and therefore a bijection, with inverse 
        $H_z^{-1}$. 

    Let us prove that $H_z$ is continuous. 
    Suppose that $(B^{(n)}, c^{(n)}, \mu^{(n)}) \to (B, c, \mu)$ as $n \to \infty$ in $\mathbf{\Psi}_a^{\mathrm{branch}}$. 
    From \eqref{eq:coalFromBranch}, it is obvious that $\rho^{(n)} \to \rho$ as $n \to \infty$, so if suffices to prove that $\mathcal{Q}^{(n)} \to \mathcal{Q}$ in $\mathcal{M}^{\mathrm{coal}}$. 
    Consider $f \in C_b([0, 1]^d)$ and $i, j \in [d]$ given. 
    Then 
    \begin{align*}
        \int_{[0, 1]^d} f(u) u_j^{1 + \delta_{ij}} \mathcal{Q}_i^{(n)}(du) 
        & = z_i \int_{U_d} \biggl( \frac{w_j}{z_j + w_j} \biggr)^{1 + \delta_{ij}} f \circ T_z(w) \mu_i^{(n)}(dw) \\ 
        & = z_i \int_{U_d} \biggl( \frac{1}{z_j + w_j} 1_{\{w_j < 1\}} + \frac{w_j}{z_j + w_j} 1_{\{w_j \geq 1\}} \biggr)^{1 + \delta_{ij}} f \circ T_z (w) (1 \wedge w_j)^{1 + \delta_{ij}} \mu_i^{(n)}(dw) \,. 
    \end{align*}
    Noting that 
    \[
        w \mapsto \biggl( \frac{1}{z_j + w_j} 1_{\{w_j < 1\}} + \frac{w_j}{z_j + w_j} 1_{\{w_j \geq 1\}} \biggr)^{1 + \delta_{ij}} f \circ T_z (w)  
    \]
    defines a bounded continuous function from $U_d$ to $\mathbb{R}$, the assumption $\mu^{(n)} \to \mu$ as $n \to \infty$ in $\mathcal{M}^{\mathrm{branch}}$ implies 
    \[
        \int_{[0, 1]^d} f(u) u_j^{1 + \delta_{ij}} \mathcal{Q}_i^{(n)}(du) 
        \to \int_{[0, 1]^d} f(u) u_j^{1 + \delta_{ij}} \mathcal{Q}_i(du) \,. 
    \]
    As $i, j \in [d]$ were arbitrary, it follows that $\mathcal{Q}^{(n)} \to \mathcal{Q}$ as $n \to \infty$ in $\mathcal{M}^{\mathrm{coal}}$. 
    Thus, $H_z$ is continuous. 

    To prove that $H_z^{-1}$ is continuous suppose that $(\rho^{\prime (n)}, \mathcal{Q}^{\prime (n)}) \to (\rho', \mathcal{Q}')$ as $n \to \infty$ in $\mathbf{L}_{\mathrm{prop}}^{\mathrm{coal}}$. 
    By \eqref{eq:inverseHz}, it is immediate that $B^{\prime (n)} \to B'$ and $c^{\prime (n)} \to c'$ as $n \to \infty$. 
    We need to show that $\mu^{\prime (n)} \to \mu'$ as $n \to \infty$ in $\mathcal{M}^{\mathrm{branch}}$. 
    Now consider $f \in C_b(U_d)$ and $i, j \in [d]$ fixed. 
    Then 
    \begin{align*}
        \MoveEqLeft
        \int_{U_d} f(w) (1 \wedge w_j)^{1 + \delta_{ij}} \mu_i^{\prime (n)}(dw) \\ 
        & = \frac{1}{z_i} \int_{[0, 1]^d} \biggl( \frac{z_j}{1 - u_j} 1_{\{u_j < 1 / (1 + z_j)\}} + \frac{1}{u_j} 1_{\{u_j \geq 1 / (1 + z_j)\}} \biggr)^{1 + \delta_{ij}} f \circ T_z^{-1}(u) u_j^{1 + \delta_{ij}} \mathcal{Q}_i^{\prime (n)} (du) \,.
    \end{align*}
    To obtain the convergence 
    \begin{equation} \label{eq:weakConvergenceTheo}
        \int_{U_d} f(w) (1 \wedge w_j)^{1 + \delta_{ij}} \mu_i^{\prime (n)}(dw) 
        \to \int_{U_d} f(w) (1 \wedge w_j)^{1 + \delta_{ij}} \mu_i'(dw) 
        \quad\text{ as } n \to \infty
    \end{equation}
    note that the mapping
    \[
        u \mapsto \biggl( \frac{z_j}{1 - u_j} 1_{\{u_j < 1 / (1 + z_j)\}} + \frac{1}{u_j} 1_{\{u_j \geq 1 / (1 + z_j)\}} \biggr)^{1 + \delta_{ij}} f \circ T_z^{-1}(u) 
    \]
    from $[0, 1]^d$ to $\mathbb{R}$ is almost everywhere, with respect to $\mathcal{Q}_i'$, continuous and bounded. This last assertion follows because $\mathcal{Q}_i'([0,1]^d \setminus [0,1)^d)=0$. 
    Thus, the convergence in \eqref{eq:weakConvergenceTheo} is attained by applying the assumption 
    $u_j^{1 + \delta_{ij}} \mathcal{Q}_i^{\prime (n)} \to u_j^{1 + \delta_{ij}} \mathcal{Q}_i'$ as $n \to \infty$ weakly. 
    This proves that $(1 \wedge w_j)^{1 + \delta_{ij}} \mu_i^{\prime (n)} \to (1 \wedge w_j)^{1 + \delta_{ij}} \mu_i'$ as $n \to \infty$ weakly, entailing the continuity of $H_z^{-1}$. 
\end{proof}

\subsection{The topological space of multitype \texorpdfstring{$\mathbf{\Lambda}$}{L}-coalescents}
\label{subsec:multiLambdaCoal}

We now show that the topology on the parameter space of the multitype $\Lambda$-coalescents is reasonable since it corresponds to a natural topology on the space of these processes. So far we have only worked with the block counting process of a multitype 
$\Lambda$-coalescent. 
We now turn to study the process itself, starting by introducing its state space, the $d$-type partitions over $\mathbb{N}$. 

\begin{definition}\label{def: partition-dtype}
    We say that $\bm{\pi} = \{(\pi_i, \mathfrak{t}_i) : i \in \mathbb{N}\}$ 
    is a $d$-type partition of $\mathbb{N}$ if  
    $\pi = \{\pi_i : i \in \mathbb{N}\}$ is a partition of $\mathbb{N}$ 
    and for each $i \in \mathbb{N}$, $\mathfrak{t}_i \in [d]_0$ is the type 
    of block $\pi_i$, where $\mathfrak{t}_i = 0$ if and only if $\pi_i = \varnothing$. 
    We assume that the elements of $\pi$ are arranged by their minimal elements; 
    i.e. $\min \pi_i \leq \min \pi_j$ for $i < j$, with the convention 
    $\min \varnothing = \infty$, where there is strict inequality if 
    $\pi_i \neq \varnothing$. 
    Let $\mathbf{P}_d$ be the collection of all $d$-type partitions of 
    $\mathbb{N}$. 
    For $m \in \mathbb{N}$ and 
    $\bm{\pi} \in \mathbf{P}_d$ we put 
    $\bm{\pi}\vert_{[m]} := \{(\pi_i \vert_{[m]}, 
    \mathfrak{t}_i 1_{\{\pi_i\vert_{[m]} \neq \varnothing\}}) : i \in \mathbb{N}\}$ 
    for the restriction of $\bm{\pi}$ to $[m]$, 
    where $\pi_i\vert_{[m]} := \pi_i \cap [m]$. 
\end{definition}

Multitype $\Lambda$-coalescents can be defined as partition valued processes, which we will denote by $\Pi$, in this framework.  
By Theorem 1.5 in \cite{johnstonMultitypeLcoalescents2022a}, the restriction $\Pi\vert_{[M]}$ of $\Pi$ to $[M]$ is a Markov chain with a finite state space. 
This chain can jump from the state $\bm{\pi}$ with $b_j$ blocks of type $j$, for $j \in [d]$, to the state $\bm{\pi}'$ if and only if there exist $k_j \in [b_j]_0$ for each $j \in [d]$ such that $\bm{\pi}'$ can be constructed by merging $k_1$ blocks of type 1, $k_2$ of type 2, etc. and assigning the resulting block the type $i \in [d]$. 
In this case, for $k \in [b]_0 \setminus \{0, e_i\}$, the jump rate from $\bm{\pi}$ to $\bm{\pi}'$ is given by 
\begin{equation}\label{rate_partvalued}
    \tilde{\lambda}_{b, k}^i 
    = \rho_{ii} 1_{\{k = 2 e_i\}} 
    + \sum_{j \in [d] \setminus \{i\}} \rho_{ij} 1_{\{k = e_j\}} 
    + \int_{[0, 1]^d \setminus \{0\}} u^k (1 - u)^{n - k} \mathcal{Q}_i(du) \,, 
\end{equation}
with $(\rho, \mathcal{Q}) \in \mathbf{L}^{\mathrm{coal}}$ given.

Note that given a $d$-type partition $\bm{\pi} \in \mathbf{P}_d$ we can 
define a function $\vartheta_{\bm{\pi}} : \mathbb{N} \to [d]$ 
by 
\(
    \vartheta_{\bm{\pi}}(i) := \mathfrak{t}_{\mathfrak{j}(i)} 
\), 
where $\mathfrak{j}(i) \in \mathbb{N}$ is the index such that 
$i \in \pi_{\mathfrak{j}(i)}$. 
We now define a distance $d_{\mathbf{P}_d}$ on $\mathbf{P}_d$ by 
\[
    d_{\mathbf{P}_d}(\bm{\pi}, \bm{\pi}') 
    = \Bigl( \max \bigl\{M \in \mathbb{N} : \pi\vert_{[M]} = \pi'\vert_{[M]} 
    \text{ and } \vartheta_{\bm{\pi}}(i) = \vartheta_{\bm{\pi}'}(i) 
    \text{ for all } i \in [M]\bigr\}\Bigr)^{-1} , 
    \quad \bm{\pi}, \bm{\pi}' \in \mathbf{P}_d,
\]
where $\pi\vert_{[M]}$ denotes the restriction of $\pi$ to $[M]$. 
This distance takes into account both the partition structure and the types of 
the blocks of $d$-type partitions. 
    We observe that the topology induced by this metric coincides with the topology induced by the identification of $\mathbf{P}_d$ with a subset $\mathbf{P}_d\vert_{[1]} \times \mathbf{P}_d\vert_{[2]} \times \cdots$, via the mapping $\bm{\pi} \mapsto (\bm{\pi}\vert_{[1]}, \bm{\pi}\vert_{[2]}, \ldots)$, equipped with the inherited topology of $\mathbf{P}_d\vert_{[1]} \times \mathbf{P}_d\vert_{[2]} \times \cdots$ with the product of discrete topologies. 

Now consider $A \subset \mathbf{P}_d$ and $\varepsilon > 0$, and denote 
$A^{\varepsilon}$ to be the $\varepsilon$-neighborhood of $A$, defined by 
\[
    A^{\varepsilon} := \bigcup_{\bm{\pi} \in A} 
    \bigl\{ \bm{\pi} \in \mathbf{P}_d : 
    d_{\mathbf{P}_d}(\bm{\pi}, \bm{\pi}') < \varepsilon\bigr\} \,. 
\]
It is readily seen that $A^{\varepsilon} = \mathbf{P}_d$ for $\varepsilon > 1$, and 
$A^{\varepsilon} = A^{1/\lfloor 1 / \varepsilon \rfloor}$ for 
$\varepsilon \in (0, 1)$. 
Thus, if we define 
\[
    \mathcal{D}_M^i := \bigl\{ \bm{\pi} \in \mathbf{P}_d : 
    \mathbb{N} \setminus [M] \in \pi \text{ and } \vartheta_{\bm{\pi}}(M+1) = i \bigr\},
\] 
for each $M \in \mathbb{N}$ and $i \in [d]$, it follows that 
$\mathcal{D} = \bigcup_{M = 1}^\infty \bigcup_{i = 1}^d \mathcal{D}_M^i$ is a 
countable dense set in $\mathbf{P}_d$. 
Consequently $(\mathbf{P}_d, d_{\mathbf{P}_d})$ is separable. 
This enables us to employ the Prohorov metric to study the convergence of stochastic 
processes whose trajectories are in $(\mathbf{P}_d, d_{\mathbf{P}_d})$.  

\begin{proposition} \label{prop:conv_mtlc_measures}
    Let $\{\Pi^{(n)} : n \in \mathbb{N}\}$ be a sequence of multitype 
    $\Lambda$-coalescents such that for each $n \in \mathbb{N}$, 
    $\Pi^{(n)}$ is characterized by the pair $(\rho^{(n)}, \mathcal{Q}^{(n)}) \in \mathbf{L}^{\mathrm{coal}}$. 
    Furthermore let $\Pi$ be a multitype $\Lambda$-coalescent characterized by 
    $(\rho, \mathcal{Q}) \in \mathbf{L}^{\mathrm{coal}}$. 
    If $(\rho^{(n)}, \mathcal{Q}^{(n)}) \to (\rho, \mathcal{Q})$ as $n \to \infty$ in $\mathbf{L}^{\mathrm{coal}}$, then 
\begin{equation}
        \Pi^{(n)} \Rightarrow \Pi \text{ as $n \to \infty$}
\end{equation}
    weakly in $D([0, T], (\mathbf{P}_d, d_{\mathbf{P}_d}))$ for all $T > 0$.
\end{proposition}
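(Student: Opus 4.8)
The plan is to prove weak convergence in the Skorokhod space $D([0,T],(\mathbf{P}_d,d_{\mathbf{P}_d}))$ by reducing the problem to the finite-state restrictions $\Pi\vert_{[M]}$, which are continuous-time Markov chains on a finite state space by Theorem 1.5 of \cite{johnstonMultitypeLcoalescents2022a}, and then invoking the consistency (projectivity) built into the product topology on $\mathbf{P}_d \cong \prod_{M} \mathbf{P}_d\vert_{[M]}$. Concretely, I would first fix $M \in \mathbb{N}$ and show that $\Pi^{(n)}\vert_{[M]} \Rightarrow \Pi\vert_{[M]}$ in $D([0,T],\mathbf{P}_d\vert_{[M]})$. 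Since the target space $\mathbf{P}_d\vert_{[M]}$ is finite, weak convergence of the finite-state Markov chains follows from convergence of their transition rates together with agreement of the initial distributions. Thus the first key step is to verify that the jump rates $\tilde{\lambda}_{b,k}^i$ given in \eqref{rate_partvalued}, which are built from $(\rho^{(n)},\mathcal{Q}^{(n)})$, converge to the corresponding rates built from $(\rho,\mathcal{Q})$ as $n \to \infty$.

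The heart of the matter is therefore a continuity statement: that the map from $(\rho,\mathcal{Q}) \in \mathbf{L}^{\mathrm{coal}}$ to the finite family of rates $\{\tilde{\lambda}_{b,k}^i\}$ is continuous in the topology we have placed on $\mathbf{L}^{\mathrm{coal}}$. Convergence of the $\rho^{(n)}$-part is immediate from the Frobenius-norm topology on $\mathbb{R}_+^{d\times d}$. For the integral part, the rate is $\int_{[0,1]^d\setminus\{0\}} u^k(1-u)^{n-k}\,\mathcal{Q}_i(du)$, and one must rewrite the integrand so as to exhibit it against the reweighted measures $u_j^{1+\delta_{ij}}\mathcal{Q}_i(du)$ that define our topology. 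Since for $k \in [b]_0 \setminus\{0,e_i\}$ each relevant monomial $u^k$ carries enough powers of the coordinates to factor out a term $u_j^{1+\delta_{ij}}$ for a suitable $j$, the integrand can be written as a bounded continuous function times the reweighting factor, so weak convergence of the reweighted measures yields convergence of the integrals. This is exactly the same mechanism used in the continuity argument in Lemma \ref{lem:homeomorphismHz}, and I would reuse that style of estimate. The only subtlety to handle carefully is the behaviour of the integrand near the boundary of the cube, where the factors $(1-u)^{n-k}$ and the removed point $\{0\}$ interact; one checks continuity and boundedness of the rewritten integrand on the relevant domain.

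Having established convergence of all finite-state restrictions, I would assemble them into convergence of the full processes. The product topology identification $\bm{\pi}\mapsto(\bm{\pi}\vert_{[1]},\bm{\pi}\vert_{[2]},\ldots)$ means that convergence in $\mathbf{P}_d$ is governed coordinatewise, and the consistency of multitype $\Lambda$-coalescents guarantees that the restrictions $\Pi\vert_{[M]}$ form a compatible (projective) family whose joint law is determined by its finite-dimensional pieces. To pass from finite-$M$ convergence to Skorokhod convergence of the $\mathbf{P}_d$-valued processes, I would verify tightness of $\{\Pi^{(n)}\}$ in $D([0,T],(\mathbf{P}_d,d_{\mathbf{P}_d}))$ and identify all subsequential limits via their restrictions. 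Tightness can be obtained from the separability established above together with control of the modulus of continuity of each restriction $\Pi^{(n)}\vert_{[M]}$, which is uniform once the rates converge (hence are eventually bounded). Since the finite-dimensional restrictions converge and characterise the limit law uniquely, every subsequential limit must coincide with $\Pi$, giving $\Pi^{(n)}\Rightarrow\Pi$.

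The main obstacle I anticipate is the final assembly step, namely promoting coordinatewise (finite-$M$) Skorokhod convergence to Skorokhod convergence of the infinite-dimensional $\mathbf{P}_d$-valued processes. Skorokhod convergence does not in general commute with projections to product coordinates, because the time-deformations required in the $J_1$ topology must be chosen simultaneously for all coordinates; so establishing tightness in $D([0,T],(\mathbf{P}_d,d_{\mathbf{P}_d}))$ and ruling out asynchronous jumps in the limit is where the real care is needed. The asynchronicity (no simultaneous transitions) built into the multitype $\Lambda$-coalescent of \cite{johnstonMultitypeLcoalescents2022a} is precisely what makes this tractable, since it prevents the pathologies that would otherwise obstruct a single time-change from working across all restrictions at once.
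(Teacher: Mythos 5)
Your proposal is correct and follows essentially the same route as the paper: reduce to the finite restrictions $\Pi^{(n)}\vert_{[M]}$, prove convergence of the rates $\tilde{\lambda}_{b,k}^{i,(n)}$ by factoring $u_i^2$ (when $k_i \geq 2$) or $u_\ell$ (when $k_i < 2$, with $\ell$ a type having $k_\ell \geq 1$) out of the integrand so that weak convergence of the reweighted measures $u_j^{1+\delta_{ij}}\mathcal{Q}_i^{(n)}(du)$ applies, and then use that finite-state Markov chains converge weakly when their rates do. The paper simply asserts that restriction-wise convergence suffices "by the definition of the topology on $\mathbf{P}_d$," whereas you flesh out that assembly step (tightness plus identification of subsequential limits); this is a refinement of, not a departure from, the paper's argument.
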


\begin{proof}
    Let $\Pi\vert_{[M]}$ ($\Pi^{(n)}\vert_{[M]}$ resp.) be the restriction of $\Pi$ ($\Pi^{(n)}$ resp.) to $[M]$. 
    By the definition of the topology imposed on $\mathbf{P}_d$, it suffices to prove that for every $M \in \mathbb{N}$, 
\begin{equation}
        \Pi^{(n)}\vert_{[M]} \Rightarrow \mathbf{\Pi}\vert_{[M]} \text{ as $n \to \infty$}
\end{equation}
    weakly in $D([0, T], (\mathbf{P}_d\vert_{[M]}, d_{\mathbf{P}_d}))$ for all $T > 0$. 
    
    Putting $\ell := \min\{j \in [d] \setminus \{i\} : k_j > 0\}$ we note that the integral in the rates given in \eqref{rate_partvalued}
\begin{equation}
        \int_{[0, 1]^d \setminus \{0\}} u^k (1 - u)^{n - k} \mathcal{Q}_i(du) 
        = \begin{dcases*}
            \int_{[0, 1]^d \setminus \{0\}} u^{k - 2 e_i} (1 - u)^{n - k} u_i^2 \mathcal{Q}_{i}(du) & if $k_i \geq 2$, \\ 
            \int_{[0, 1]^d \setminus \{0\}} u^{k - e_{\ell}} (1 - u)^{n - k} u_{\ell} \mathcal{Q}_i(du) & if $k_i < 2$. 
        \end{dcases*}
\end{equation}
    Using this last expression, it is clear that the assumption of convergence $(\rho^{(n)}, \mathcal{Q}^{(n)}) \to (\rho, \mathcal{Q})$ as $n \to \infty$ implies the convergence of the transition rates of the processes $\{\Pi^{(n)}\vert_{[M]} : n \in \mathbb{N}\}$ to those of the process $\Pi\vert_{[M]}$, as $n \to \infty$, for every $M \in \mathbb{N}$, i.e. 
\begin{equation}
        \lim_{n \to \infty} \tilde{\lambda}_{b, k}^{i, (n)} 
        = \tilde{\lambda}_{b, k}^i.
\end{equation}

    By the fact that $\Pi^{(n)}\vert_{[M]}$ and $\Pi\vert_{[M]}$ are continuous-time Markov chains with finite state space, the convergence of the transitions imply the weak convergence 
\begin{equation}
        \Pi^{(n)}\vert_{[M]} \Rightarrow \Pi\vert_{[M]}, 
        \quad\text{ as $n \to \infty$,}
\end{equation}
    in the space of c\`adl\`ag paths from $[0, T]$ to 
    $(\mathbf{P}_d\vert_{[M]}, d_{\mathbf{P}_d})$ endowed with the Skorokhod topology 
    for any $T > 0$. 
\end{proof}

\subsection{The topological space of multitype CSBPs} Recall that in section \ref{subsec:mainResults} we defined a topology on the parameter space $\mathbf{\Psi}^{\mathrm{branch}}$ of CSBPs. 
We now provide a result that justifies its definition. 

\begin{proposition} \label{prop:convergence_mtbp_triplet}
    Let $\{X^{(n)} : n \in \mathbb{N}\}$ be a sequence of $d$-type CSBPs such that for each 
    $n$ the characteristic triplet of $X^{(n)}$ is $(B^{(n)}, c^{(n)}, \mu^{(n)}) \in \mathbf{\Psi}^{\mathrm{branch}}$. 
    In addition, consider a $d$-type CSBP $X$ with characteristic triplet 
    $(B, c, \mu) \in \mathbf{\Psi}^{\mathrm{branch}}$. 
    If 
    \begin{equation} \label{eq:triplet_convergence}
        (B^{(n)}, c^{(n)}, \mu^{(n)}) \to (B, c, \mu) \text{ as $n \to \infty$ in $\mathbf{\Psi}^{\mathrm{branch}}$,}
    \end{equation}
    then $X^{(n)} \Rightarrow X$ as $n \to \infty$, weakly on the space of c\`adl\`ag 
    paths from $\mathbb{R}_+$ to $[0, \infty]^d$ with respect to the Skorokhod 
    topology if the branching mechanism of $X$ is nonexplosive, and with 
    respect to the uniform Skorokhod topology if the branching mechanism is 
    explosive. 
\end{proposition}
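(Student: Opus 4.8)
The plan is to prove the functional convergence in two stages: first establish convergence of the finite-dimensional distributions of $X^{(n)}$ to those of $X$ through the exponential-affine (Laplace) characterisation of multitype CSBPs, and then upgrade to convergence in the Skorokhod space by proving tightness of $\{X^{(n)}\}$. Recall from the affine-process theory of \cite{duffieAffineProcessesApplications2003} that the law of a $d$-type CSBP with triplet $(B,c,\mu)$ is determined by its vector branching mechanism $\psi=(\psi_1,\dots,\psi_d)$, obtained by applying the generator \eqref{eq:branching_generator} to the exponentials $u_\lambda(x)=e^{-\langle\lambda,x\rangle}$; explicitly, for $\lambda\in\mathbb{R}_+^d$,
\begin{equation*}
\psi_i(\lambda)=-\sum_{j=1}^d b_{ji}\lambda_j+c_i\lambda_i^2+\int_{U_d}\bigl(e^{-\langle\lambda,w\rangle}-1+(1\wedge w_i)\lambda_i\bigr)\mu_i(dw),
\end{equation*}
and the Laplace functional is $\mathbb{E}_x[e^{-\langle\lambda,X(t)\rangle}]=e^{-\langle x,v(t,\lambda)\rangle}$, where $v(\cdot,\lambda)$ solves the Riccati-type system $\partial_t v=-\psi(v)$, $v(0,\lambda)=\lambda$. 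Since this functional determines the one-dimensional and, by the Markov property, the finite-dimensional distributions, it suffices to show $\psi^{(n)}\to\psi$ in a suitable sense and then transfer this to $v^{(n)}\to v$.

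The key analytic step is the convergence $\psi^{(n)}_i(\lambda)\to\psi_i(\lambda)$. The linear and diffusion parts converge trivially from $B^{(n)}\to B$ and $c^{(n)}\to c$, so the content is the jump integral, which I would write as a product against the very weights appearing in the topology of $\mathcal{M}^{\mathrm{branch}}$. Setting $W_i(w):=\sum_{j=1}^d(1\wedge w_j)^{1+\delta_{ij}}$ and $g^i_\lambda(w):=e^{-\langle\lambda,w\rangle}-1+(1\wedge w_i)\lambda_i$, I claim $\phi^i_\lambda:=g^i_\lambda/W_i\in C_b(U_d)$, so that
\begin{equation*}
\int_{U_d}g^i_\lambda\,d\mu^{(n)}_i=\sum_{j=1}^d\int_{U_d}\phi^i_\lambda(w)\,(1\wedge w_j)^{1+\delta_{ij}}\mu^{(n)}_i(dw)\longrightarrow\sum_{j=1}^d\int_{U_d}\phi^i_\lambda(w)\,(1\wedge w_j)^{1+\delta_{ij}}\mu_i(dw)
\end{equation*}
by the assumed weak convergence of each weighted measure. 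Establishing $\phi^i_\lambda\in C_b(U_d)$ is where the asymmetric integrability condition \eqref{eq:integrability_conditions_mu} enters: near the origin $g^i_\lambda(w)=-\sum_{j\neq i}\lambda_j w_j+O(|w|^2)$ is controlled in the cross-directions $j\neq i$ at first order by $1\wedge w_j$ and in the own direction $i$ at second order (the first-order term cancels against $(1\wedge w_i)\lambda_i$) by $(1\wedge w_i)^2$, matching $W_i$ exactly; away from the origin $g^i_\lambda$ is bounded and $W_i$ is bounded below, so the ratio stays bounded and continuous.

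Next I would upgrade the pointwise limit to a locally uniform one and push it through the ODE. Because each $\psi_i$ is convex in $\lambda$ (a sum of an affine function, $c_i\lambda_i^2$, and an integral of the convex maps $\lambda\mapsto e^{-\langle\lambda,w\rangle}$), pointwise convergence of the convex functions $\psi^{(n)}_i$ on the open orthant automatically improves to locally uniform convergence. Since $\psi$ is locally Lipschitz, continuous dependence of ODE solutions on the vector field (Gronwall) yields $v^{(n)}(t,\lambda)\to v(t,\lambda)$ locally uniformly in $t$ on the maximal interval of existence of $v$, hence convergence of the Laplace functionals and of the finite-dimensional distributions. For tightness I would use the martingale problem: $f(X^{(n)}_t)-\int_0^t\mathcal{L}^{(n)}f(X^{(n)}_s)\,ds$ is a martingale for $f\in C^2_c$, and convergence of the triplets gives uniform-on-compacts bounds on the drift and quadratic-variation characteristics, so the Aldous--Rebolledo criterion, together with a compact-containment estimate extracted from the Laplace bound, gives tightness in $D(\mathbb{R}_+,[0,\infty]^d)$; combined with the finite-dimensional convergence this yields $X^{(n)}\Rightarrow X$.

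Finally, when the limiting branching mechanism is explosive, $X$ may reach $\infty$ in finite time; I would treat the state space as the compactification $[0,\infty]^d$, on which $e^{-\langle\lambda,\cdot\rangle}$ with $\lambda$ in the open orthant still separates points and vanishes at $\infty$, and replace the $J_1$ topology by the uniform Skorokhod topology to accommodate the explosion time. I expect the two main obstacles to be exactly these points: verifying the claim $\phi^i_\lambda\in C_b(U_d)$ rigorously, since it rests on the precise matching between the decay of the integrand and the order-$(1+\delta_{ij})$ weights (the difference between the own colony and the cross colonies), and controlling the behaviour at the explosion time so that tightness and the identification of the limit survive the passage to the uniform topology.
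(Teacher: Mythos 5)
Your route is genuinely different from the paper's. The paper never touches the Riccati system: it represents each multitype CSBP as a multiparameter Lamperti time-change of $d$ auxiliary L\'evy processes (Theorem 1 of \cite{caballeroAffineProcessesMathbb2017}), proves pointwise convergence of the characteristic exponents of those L\'evy processes (Lemma \ref{lem:char_exp_convergence}, verifying the three conditions of Sato's Theorem 8.7 with a partition of unity on the sphere to reduce everything to the weighted measures $(1\wedge w_j)^{1+\delta_{ij}}\mu_i(dw)$), and then invokes the continuity of the multiparameter time-change map (Theorem 2 of \cite{caballeroAffineProcessesMathbb2017}) to transfer L\'evy convergence to CSBP convergence. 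The first half of your argument is correct and in fact cleaner than the corresponding step in the paper: dividing by the weight $W_i(w)=\sum_j(1\wedge w_j)^{1+\delta_{ij}}$ and checking $g^i_\lambda/W_i\in C_b(U_d)$ does in one stroke what the paper does via the partition of unity plus the separate treatment of the truncated second moments and the drift correction; your small-$w$ expansion correctly exploits the asymmetry of \eqref{eq:integrability_conditions_mu}, and the convexity upgrade to locally uniform convergence together with Gronwall for the Riccati flow is sound for $\lambda$ in the open orthant, yielding finite-dimensional convergence on $[0,\infty]^d$.

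The genuine gap is the passage to path space, exactly at the point you flag as an obstacle. Your plan (martingale problem, Aldous--Rebolledo, compact containment) can deliver at best tightness, hence weak convergence, in $D(\mathbb{R}_+,[0,\infty]^d)$ with the $J_1$ topology. But the proposition asserts convergence in the \emph{uniform} Skorokhod topology when the limiting mechanism is explosive, and for processes with jumps weak convergence in a locally uniform sense is strictly stronger than $J_1$ convergence (finer topology, more bounded continuous test functionals); it cannot be extracted from finite-dimensional convergence plus $J_1$-tightness, and the standard ``continuous limit'' upgrade is unavailable because the limiting CSBP has jumps. Moreover, in the explosive case compact containment in $\mathbb{R}_+^d$ fails by definition, so the martingale problem must be run on the compactification $[0,\infty]^d$, where $C_c^2(\mathbb{R}_+^d)$ test functions no longer characterize the process and the bounds on $\mathcal{L}^{(n)}f$ degenerate because of the unbounded factors $x_i$ in the generator. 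This is precisely the difficulty the paper's time-change route is built to bypass: the continuity theorem for multiparameter time changes is what produces the dichotomy Skorokhod versus uniform-Skorokhod according to explosion, and your argument has no substitute for it. To close the gap you would either need a uniform-topology tightness/identification argument adapted to the explosion time, or, more economically, graft the paper's time-change step onto your (correct) mechanism-convergence lemma.
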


In the proof of the previous result we will consider $d$ auxiliary L\'evy processes in $\mathbb{R}^d$ from which the multitype CSBP can be obtained through a multiparameter Lamperti time-change. 
From \cite{duffieAffineProcessesApplications2003}, a multitype CSBP $X$ with characteristic triplet $(A, \gamma, \nu) \in \mathbf{\Psi}^{\mathrm{branch}}$ can be characterized by the Laplace transform of its semigroup, which satisfies that for each $x, \lambda \in \mathbb{R}_+^d$, 
\[
    \log \mathbb{E}_x[ e^{-\langle X(t), \lambda \rangle} ] 
    = -\langle u(t, \lambda), x\rangle,
\]
where $u(t, \lambda) = (u_1(t, \lambda), \ldots, u_d(t, \lambda))$ satisfy a system of Riccati equations. 
Said Riccati equations are given, for $i \in [d]$, by 
\[
    \partial_t u_i(t, \lambda)
     = - \varphi_i( u(t, \lambda) ), \qquad \text{$\lambda\geq0$},
\]
with 
\[ 
    \varphi_i(\lambda)
    = \gamma_i \lambda_i^2 - \langle A e_i, \lambda\rangle 
    + \int_{U_d} \bigl(e^{-\langle \lambda, w \rangle} - 1 + \lambda_i (1 \wedge w_i) \bigr) 
    \nu_i(dw), \qquad \text{$\lambda\geq0$}.
\]
For each $i \in [d]$ we will consider a L\'evy process $\mathcal{Y}_i$ on $\mathbb{R}^d$ with characteristic exponent 
\begin{equation} \label{eq:char_exponent_levy}
    \varphi_i(-\mathrm{i} \lambda) = - \gamma_i \lambda_i^2 + \mathrm{i} \langle A e_i, \lambda \rangle + \int_{U_d} \bigl( e^{\mathrm{i} \langle \lambda, z\rangle} - 1 - \mathrm{i} \lambda_i (1 \wedge z_i) \bigr) \nu_i(dz), \quad \lambda \in \mathbb{R}^d,
\end{equation}
or, equivalently, characteristic triplet $(A e_i, \Gamma_i, \nu_i)$, where $\Gamma_i = (\gamma_i \delta_{ij} \delta_{ik})_{j, k \in [d]}$, which is different from the characteristic triplet of the CSBP $X$. 
Then, according to Theorem 1 in \cite{caballeroAffineProcessesMathbb2017}, the $d$-type CSBP $X$ with characeristic triplet $(A, \gamma, \nu)$ can be seen as a multiparameter time-change of the $d$ Lévy processes $\mathcal{Y}_i, \ldots, \mathcal{Y}_i$. 

An important property that we will use in the proof of Proposition \ref{prop:convergence_mtbp_triplet} is that the weak convergence of the auxiliary L\'evy processes implies the weak convergence of the CSBPs. 
Thus, before proving Proposition \ref{prop:convergence_mtbp_triplet} we will prove an auxiliary lemma, which essentially states that the convergence of the parameters in $\mathbf{\Psi}^{\mathrm{branch}}$ implies the convergence of the associated L\'evy processes through the multiparameter Lamperti time-change. 
The latter convergence, of L\'evy processes, will be proved through the pointwise convergence of their associated characeristic exponent, as in Theorem 8.7 in \cite{satoLevyProcessesInfinitely2013}. 

To prepare the way for the auxiliary lemma, note that if we define $\tilde{A} \in \mathbb{R}^{d \times d}$ by setting its $(i,j)$-th entry, with $i, j \in [d]$, as 
\[
    \tilde{a}_{ij} 
    = a_{ij} - \delta_{ij} \int_{U_d} \Bigl( (1 \wedge w_i) - w_i \bigl( 1_{\{\lVert w \rVert \leq 1\}} + (2 - \lVert w \rVert) 1_{\{1 < \lVert w \rVert \leq 2\}} \bigr) \Bigr) \nu_i(dw) \,,
\]
then we can rewrite \eqref{eq:char_exponent_levy} as 
\begin{equation} \label{eq:char_exp_alt}
    \varphi_i(-\mathrm{i} \lambda) 
    = - \gamma_i \lambda_i^2 + \mathrm{i} \langle\tilde{A} e_i, \lambda\rangle 
    + \int_{U_d} \bigl( e^{ \mathrm{i} \langle \lambda, z \rangle } - 1 - \mathrm{i} z_i \lambda_i \upsilon(z) \bigr) \nu_i(dz), 
    \quad \lambda \in \mathbb{R}^d, 
\end{equation}
where $\upsilon(z) := 1_{\{\lVert z \rVert \leq 1\}} + (2 - \lVert z \rVert) 1_{\{1 < \lVert z \rVert \leq 2\}}$ is a continuous function with compact support commonly used in the representation of the characteristic exponent (see section 8 in \cite{satoLevyProcessesInfinitely2013}). 

\begin{lemma} \label{lem:char_exp_convergence}
    Suppose that $\{(B^{(n)}, c^{(n)}, \mu^{(n)}) : n \in \mathbb{N}\}$ and $(B, c, \mu)$ are as in Proposition \ref{prop:convergence_mtbp_triplet}. 
    For each $i \in [d]$ let $\varphi_i^{(n)}$ and $\varphi_i$, constructed as in \eqref{eq:char_exponent_levy}, be the characteristic exponents of the L\'evy processes $Y_i^{(n)}$ and $Y_i$, respectively. 
    Under condition \eqref{eq:triplet_convergence} we get 
    \begin{equation} \label{eq:convergence_of_characteristics}
        \lim_{n \to \infty} \varphi_i^{(n)}(-\mathrm{i}\lambda) = \varphi_i(-\mathrm{i}\lambda)
    \end{equation}
    for any $\lambda \in \mathbb{R}^d$ and $i \in [d]$. 
\end{lemma}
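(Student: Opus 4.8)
The plan is to work with the alternative representation \eqref{eq:char_exp_alt} of the characteristic exponent, in which the L\'evy--Khintchine compensator uses the compactly supported truncation $\upsilon$ rather than $1 \wedge w_i$; this representation is engineered precisely so that the jump integrand, after dividing by the natural weight, becomes a bounded continuous test function for the weak convergence built into the topology of $\mathcal{M}^{\mathrm{branch}}$. Writing, for each $i \in [d]$ and $\lambda \in \mathbb{R}^d$,
\[
    \varphi_i^{(n)}(-\mathrm{i}\lambda) = -c_i^{(n)} \lambda_i^2 + \mathrm{i}\langle \tilde{A}^{(n)} e_i, \lambda\rangle + \int_{U_d} g_\lambda(w)\, \mu_i^{(n)}(dw),
\]
with $g_\lambda(w) := e^{\mathrm{i}\langle\lambda, w\rangle} - 1 - \mathrm{i}w_i \lambda_i \upsilon(w)$ and $\tilde{A}^{(n)}$ the modified drift matrix of \eqref{eq:char_exp_alt} built from $B^{(n)}$ and $\mu^{(n)}$, I would treat the three summands separately. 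The Gaussian term converges because $c^{(n)} \to c$; in the drift term the off-diagonal entries of $\tilde{A}^{(n)} e_i$ equal those of $B^{(n)}$ and converge by \eqref{eq:triplet_convergence}, so the only nontrivial contributions are the diagonal correction integral defining $\tilde{a}_{ii}$ and the jump integral.

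For the jump integral I introduce the weight $W(w) := \sum_{j = 1}^d (1 \wedge w_j)^{1 + \delta_{ij}}$, which is strictly positive and continuous on $U_d$, and observe that $\int_{U_d} g_\lambda\, \mu_i^{(n)} = \sum_{j = 1}^d \int_{U_d} (g_\lambda / W)\,(1 \wedge w_j)^{1 + \delta_{ij}} \mu_i^{(n)}$. Since the topology on $\mathcal{M}^{\mathrm{branch}}$ is exactly weak convergence of the measures $(1 \wedge w_j)^{1 + \delta_{ij}} \mu_i^{(n)}(dw)$ for each pair $i, j$, it then suffices to check that $g_\lambda / W \in C_b(U_d)$, for then each of the finitely many summands converges to its counterpart with $\mu_i$. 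Continuity on $U_d$ is clear; the content is boundedness, which I would verify region by region. Near the origin the compensator is active ($\upsilon \equiv 1$), so $g_\lambda(w) = \mathrm{i}\sum_{j \neq i} \lambda_j w_j + O(|w|^2)$, and since both $\sum_{j \neq i} w_j \leq W$ and $|w|^2 \leq W$ there, one gets $|g_\lambda| \leq C W$; on $\{\lVert w \rVert > 2\}$ the compensator vanishes, so $|g_\lambda| \leq 2$ while $W$ is bounded below by a positive constant on that set; the intermediate region $\{1 \leq \lVert w \rVert \leq 2\}$ is compact in $U_d$. This is precisely where the matching between the quadratic weight $(1 \wedge w_i)^2$ in the $i$-th direction, the linear weights $(1 \wedge w_j)$ off-diagonal, and the choice of truncation pays off.

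The diagonal drift correction $\int_{U_d} \bigl( (1 \wedge w_i) - w_i \upsilon(w) \bigr) \mu_i^{(n)}(dw)$ is handled by the same device: the integrand $\phi_i(w) := (1 \wedge w_i) - w_i \upsilon(w)$ vanishes on $\{\lVert w \rVert \leq 1\}$ (where $\upsilon \equiv 1$ and $w_i \leq 1$) and is bounded, so $\phi_i / W \in C_b(U_d)$, and writing $\phi_i = (\phi_i / W) W$ and splitting as above yields convergence of this integral by the same weak-convergence argument. Combining the Gaussian, drift, and jump contributions gives \eqref{eq:convergence_of_characteristics}.

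I expect the main obstacle to be the verification that $g_\lambda / W$ is bounded on all of $U_d$: one must simultaneously control the behaviour near $0$, where the $O(|w|^2)$ term and the off-diagonal linear terms must be absorbed into $(1 \wedge w_i)^2$ and the $(1 \wedge w_j)$ respectively, and the behaviour at infinity, where the compensator switches off but $W$ stays bounded away from $0$. Producing a single bounded continuous quotient across these regimes is exactly what forces the use of the representation \eqref{eq:char_exp_alt} and of the specific weights defining the topology on $\mathcal{M}^{\mathrm{branch}}$, with the integrability condition \eqref{eq:integrability_conditions_mu} guaranteeing that each weighted measure is finite so that the decomposition of the integral is legitimate.
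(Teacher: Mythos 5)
Your proof is correct, but it takes a genuinely different route from the paper's. The paper does not prove the convergence \eqref{eq:convergence_of_characteristics} by direct estimation; instead it reduces the lemma to Theorem 8.7 of \cite{satoLevyProcessesInfinitely2013}, verifying that theorem's three hypotheses: (i) convergence of $\int f\, d\mu_i^{(n)}$ for every $f \in C_b(U_d)$ vanishing near $0$, proved via a partition of unity $\chi_1,\dots,\chi_d$ on the positive unit sphere so that on $\operatorname{supp}\chi_j$ the coordinate $w_j$ is bounded below and $f/(1\wedge w_j)^{1+\delta_{ij}}$ becomes a bounded continuous test function; (ii) the joint Gaussian/small-jump condition $\lim_{\varepsilon \downarrow 0}\limsup_n \bigl\lvert z_i^2 c_i^{(n)} - z_i^2 c_i + \int \langle z,w\rangle^2 1_{\{\lVert w\rVert \le \varepsilon\}}\mu_i^{(n)}(dw)\bigr\rvert = 0$, proved by expanding $\langle z,w\rangle^2$ against the weighted measures, restricting to radii $\varepsilon$ outside the set of atoms of $\lVert w \rVert$ under $\mu_i$, and applying dominated convergence; and (iii) convergence of the modified drifts, deduced from (i). You bypass Sato's criterion entirely: working from \eqref{eq:char_exp_alt}, you prove pointwise convergence of the exponents with a single device, the global quotient $g_\lambda/W$ (and $\phi_i/W$) with $W(w)=\sum_j (1\wedge w_j)^{1+\delta_{ij}}$, whose boundedness you correctly verify region by region (your estimates check out: on $\{\lVert w\rVert\le 1\}$ one has $\lVert w\rVert^2 \le W$ and $\sum_{j\ne i}w_j\le W$; on $\{\lVert w\rVert>2\}$, $W \ge (1\wedge 2/\sqrt d)^2$; the middle annulus is compact in $U_d$). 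What your approach buys is brevity and self-containedness, and it makes transparent why no analogue of Sato's condition (ii) needs separate treatment here: the topology on $\mathcal{M}^{\mathrm{branch}}$ admits test functions in $C_b(U_d)$ that need not vanish at the puncture $0$, so no mass of the weighted measures can leak toward $0$ and create an extra Gaussian component, and this is exactly encoded in the boundedness of $g_\lambda/W$ near the origin, where the quadratic weight $(1\wedge w_i)^2$ absorbs the Taylor remainder and the linear weights absorb the off-diagonal linear term. What the paper's route buys is that it delivers, en route, weak convergence of the associated infinitely divisible distributions (the form in which the lemma feeds into Proposition \ref{prop:convergence_mtbp_triplet}) by a standard citable criterion; note also that its partition-of-unity trick is the local analogue of your weight $W$, so the two proofs share that one mechanism. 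Two cosmetic points to tidy up: $g_\lambda$ is complex-valued, so apply the weak convergence hypothesis to its real and imaginary parts separately, and state the uniform Taylor bound $\lvert e^{\mathrm{i}\theta}-1-\mathrm{i}\theta\rvert \le \theta^2/2$ explicitly when bounding the remainder.
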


\begin{proof}
    By \eqref{eq:char_exponent_levy}, for each $i \in [d]$, $Y_i^{(n)}$ and $Y_i$ are L\'evy processes on $\mathbb{R}^d$ with characteristic triplets $(B^{(n)} e_i, G_i^{(n)}, \mu_i^{(n)})$ and $(B e_i, G_i, \mu_i)$ respectively, where $G_i^{(n)} = (c_i^{(n)} \delta_{ij} \delta_{ij})_{j, k = 1}^d$ and $G_i = (c_i \delta_{ij} \delta_{ij})_{j, k = 1}^d$. 
    Considering the alternative representation \eqref{eq:char_exp_alt} of $\varphi_i^{(n)}$ and $\varphi_i$, with $\tilde{B}^{(n)}$ and $\tilde{B}$ instead of $B^{(n)}$ and $B$ respectively, it will suffice to prove that the conditions of Theorem 8.7 in \cite{satoLevyProcessesInfinitely2013} hold for each $i \in [d]$ under condition \eqref{eq:triplet_convergence} to deduce \eqref{eq:convergence_of_characteristics}. 
    Namely, we will prove that for each $i \in [d]$: 
    \begin{enumerate}
        \item $\int_{U_d} f(w) \mu_i^{(n)}(dw) \to \int_{U_d} f(w) \mu_i(dw)$ as $n \to \infty$ for every $f \in C_b(U_d)$ such that $f = 0$ in a neighborhood of $0$;
        \item for every $z \in \mathbb{R}^d$, 
            \[
                \adjustlimits \lim_{\varepsilon \downarrow 0} \limsup_{n \to \infty} \Bigl\lvert z_i^2 c_i^{(n)} - z_i^2 c_i + \int_{U_d} \langle z, w \rangle^2 1_{\{\lVert w \rVert \leq \varepsilon\}} \mu_i^{(n)}(dw) \Bigr\rvert = 0;
            \]
        \item $\tilde{b}_{ij}^{(n)} \to \tilde{b}_{ij}$ as $n \to \infty$. 
    \end{enumerate}

    We will first prove (1). 
    For each $j \in [d]$ define the open subset $V_j = \{x \in \mathbb{S}_+^d : x_j > 1/\sqrt{2d}\}$ of $\mathbb{S}_+^d := \{x \in \mathbb{R}_+^d : \lVert x \rVert = 1\}$, and note that $\mathbb{S}_+^d \subset \bigcup_{j \in [d]} V_j$. 
    By Theorem 2.13 in \cite{rudinRealComplexAnalysis2013}, there exists a collection of functions $\chi_1, \ldots, \chi_d \in C_c(\mathbb{S}_+^d)$ such that $\sum_{j \in [d]} \chi_j \equiv 1$, and for each $j \in [d]$ both $0 \leq \chi_j \leq 1$ and $\operatorname{supp} \chi_j \subset V_j$ hold. 
    For a function $f \in C_b(U_d)$ such that $f = 0$ in a neighborhood of $0$, 
    \[
        \int_{U_d} f(w) \mu_i^{(n)}(dw) 
        = \sum_{j = 1}^d \int_{U_d} \chi_j\biggl( \frac{w}{\lVert w \rVert} \biggr) \frac{f(w)}{(1 \wedge w_j)^{1 + \delta_{ij}}} (1 \wedge w_j)^{1 + \delta_{ij}} \mu_i^{(n)}(dw) \,.
    \]
    Note that 
    \[
        w \mapsto \chi_j\biggl( \frac{w}{\lVert w \rVert} \biggr) \frac{f(w)}{(1 \wedge w_j)^{1 + \delta_{ij}}}
    \]
    defines a function in $C_b(U_d)$. 
    Then, for each $j \in [d]$ we deduce, by the hypothesis $(1 \wedge w_j)^{1 + \delta_{ij}} \mu_i^{(n)}(dw) \to (1 \wedge w_j)^{1 + \delta_{ij}} \mu_i(dw)$ as $n \to \infty$ weakly, that
    \[
        \int_{U_d} \chi_j\biggl( \frac{w}{\lVert w \rVert} \biggr) \frac{f(w)}{(1 \wedge w_j)^{1 + \delta_{ij}}} (1 \wedge w_j)^{1 + \delta_{ij}} \mu_i^{(n)}(dw)  
        \to \int_{U_d} \chi_j\biggl( \frac{w}{\lVert w \rVert} \biggr) \frac{f(w)}{(1 \wedge w_j)^{1 + \delta_{ij}}} (1 \wedge w_j)^{1 + \delta_{ij}} \mu_i(dw) \text{ as } n \to \infty \,,
    \]
    which entails  
    \[
        \lim_{n \to \infty} \int_{U_d} f(w) \mu_i^{(n)}(dw) = \int_{U_d} f(w) \mu_i(dw)\,. 
    \]
    Thus, (1) is proved. 

    Having proved (1), we can now show that (3) holds. 
    Indeed, it suffices to note that 
    \[
        w \mapsto (1 \wedge w_i) - w_i \bigl(1_{\{\lVert w \rVert \leq 1\}} + (2 - \lVert w \rVert) 1_{\{1 < \lVert w \rVert \leq 2\}} \bigr) 
    \]
    defines a function in $C_b(U_d)$ that is equal to $0$ in a neighborhood of $0$. 
    Therefore, (3) follows by (1) and the hypothesis $b_{ij}^{(n)} \to b_{ij}$ as $n \to \infty$ for each $j \in [d]$. 

    Let us finish the proof by showing that (2) holds. 
    In this direction, note that by elementary properties of the limit superior, 
    \begin{align*} \MoveEqLeft
        \limsup_{n \to \infty} \Bigl\lvert z_i^2 c_i^{(n)} - z_i^2 c_i + \int_{U_d} \langle z, w \rangle^2 1_{\{\lVert w \rVert \leq \varepsilon\}} \mu_i^{(n)}(dw) \Bigr\rvert \\
        & \leq \limsup_{n \to \infty} z_i^2 \lvert c_i^{(n)} - c_i \rvert 
        + \limsup_{n \to \infty} \int_{U_d} \langle z, w \rangle^2 1_{\{\lVert w \rVert \leq \varepsilon\}} \mu_i^{(n)}(dw)\,.
    \end{align*}
    As $c_i^{(n)} \to c_i$, $\limsup_{n \to \infty} z_i^2 \lvert c_i^{(n)} - c_i\rvert = 0$, so it will suffice to show that 
    \begin{equation} \label{eq:limLimsup}
        \adjustlimits \lim_{\varepsilon \downarrow 0} \limsup_{n \to \infty} 
        \int_{U_d} \langle z, w \rangle^2 1_{\{\lVert w \rVert \leq \varepsilon\}} \mu_i^{(n)}(dw) = 0 \,.
    \end{equation}
    To prove this, write $\mathcal{D}_i := \{\varepsilon > 0 : \mu_i(\{\lVert w \rVert = \varepsilon\}) > 0\}$ for the set of radii $\varepsilon > 0$ for which the sphere of radius $\varepsilon$ has positive measure under $\mu_i$. 
    As $\varepsilon \mapsto \limsup_{n \to \infty} \int_{U_d} \langle z, w \rangle^2 1_{\{\lVert w \rVert \leq \varepsilon\}} \mu_i^{(n)}(dw)$ is an increasing function from $(0, \infty)$ to $\mathbb{R}_+$, we can consider, without loss of generality, $\varepsilon \in (0, 1) \setminus \mathcal{D}_i$.
    By considering $\varepsilon \in (0, 1)$,  
    \begin{align*} 
        \int_{U_d} \langle z, w \rangle^2 1_{\{\lVert w \rVert \leq \varepsilon\}} \mu_i^{(n)}(dw) = {} 
        & z_i^2 \int_{U_d} 1_{\{\lVert w \rVert \leq \varepsilon\}} (1 \wedge w_i)^2 \mu_i^{(n)}(dw) \\
        & + \sum_{j \in [d] \setminus \{i\}} z_j \int_{U_d} \Bigl( \sum_{k = 1}^d (1 + \delta_{ik}) z_k w_k \Bigr) 1_{\{\lVert w \rVert \leq \varepsilon\}} (1 \wedge w_j) \mu_i^{(n)}(dw) \,.
    \end{align*} 
    Now, if $\varepsilon \in (0, 1) \setminus \mathcal{D}_i$, the assumption $(1 \wedge w_j)^{1 + \delta_{ij}} \mu_i^{(n)}(dw) \to (1 \wedge w_j)^{1 + \delta_{ij}} \mu_i(dw)$ as $n \to \infty$ weakly entails that, as $n \to \infty$, 
    \[
        \int_{U_d} \langle z, w \rangle^2 1_{\{\lVert w \rVert \leq \varepsilon\}} \mu_i^{(n)}(dw) \to \int_{U_d} \langle z, w \rangle^2 1_{\{\lVert w \rVert \leq \varepsilon\}} \mu_i(dw) \,,
    \]
    so for $\varepsilon \in (0, 1) \setminus \mathcal{D}_i$, 
    \[
        \limsup_{n \to \infty} \int_{U_d} \langle z, w \rangle^2 1_{\{\lVert \varepsilon \rVert \leq \varepsilon\}} \mu_i^{(n)}(dw) 
        = \int_{U_d} \langle z, w \rangle^2 1_{\{\lVert w \rVert \leq \varepsilon\}} \mu_i(dw) \,. 
    \]
    Finally, note that by Dominated Convergence we deduce 
    \[
        \lim_{\varepsilon \downarrow 0} \int_{U_d} \langle z, w \rangle^2 1_{\{\lVert w \rVert \leq \varepsilon\}} \mu_i(dw) = 0\,, 
    \]
    which allows us to obtain \eqref{eq:limLimsup}, finishing the proof.
\end{proof}

We now turn to the proof of Proposition \ref{prop:convergence_mtbp_triplet}.

\begin{proof}[Proof of Proposition \ref{prop:convergence_mtbp_triplet}]
    For each $n \in \mathbb{N}$ and every $i \in [d]$, let 
    $Y_i^{(n)}$ be a L\'evy process on $\mathbb{R}^d$ with characteristic triplet given by 
    $(B^{(n)} e_i, G_i^{(n)}, \mu_i^{(n)})$, where 
    $G_i^{(n)} = (c_i^{(n)} \delta_{ij} \delta_{ik})_{j, k = 1}^d$. 
    Then, Lemma \ref{lem:char_exp_convergence}, 
    along condition \eqref{eq:triplet_convergence}, imply that 
    \[
        Y_i^{(n)} \Rightarrow Y_i \text{ as $n \to \infty$ for each $i \in [d]$, }
    \]
    where $Y_i$ is a L\'evy process with characteristic triplet $(B e_i, G_i, \mu_i)$ 
    in which $G_i = (c_i \delta_{ij} \delta_{ik})_{j,k=1}^d$,   
    and the convergence is weak in the space of c\`adl\`ag paths from 
    $\mathbb{R}_+$ to $\mathbb{R}_+^d$ with the Skorokhod topology. 

    It then transpires, by the continuity of multiparameter time changes, see 
    Theorem 2 in \cite{caballeroAffineProcessesMathbb2017} 
    , that 
    \[
        X^{(n)} \Rightarrow X \text{ as $n \to \infty$,}
    \]
    weakly on the space of c\`adl\`ag paths from $\mathbb{R}_+$ to $[0, \infty]^d$ 
    endowed with the Skorokhod topology or the uniform Skorokhod topology, 
    according to if the branching mechanism of $X$ is nonexplosive or explosive. 
\end{proof}

To finish the section we present the proof of Theorem \ref{theo:homeomorphismSpaces}. 
\begin{proof}[Proof of Theorem \ref{theo:homeomorphismSpaces}]
    By Lemma \ref{lem:homeomorphismHz} we know that $H_z$ defines a homeomorphism between $\mathbf{\Psi}_a^{\mathrm{branch}}$ and $\mathbf{L}_{\mathrm{prop}}^{\mathrm{coal}}$. 
    Propositions \ref{prop:conv_mtlc_measures} and \ref{prop:convergence_mtbp_triplet} tell us that the homeomorphism between the parameters of CSBPs and those of multitype $\Lambda$-coalescents extends to a homeomorphism between the associated random processes, finishing the proof. 
\end{proof}

\section{Frequency process and convergence the sequential sampling procedure}\label{sec:proof}


In this section we will provide the proof of Theorem \ref{theo:culling_limit}.  We will first show pathwise uniqueness of the solution to \eqref{eq:sde_d_colonies} followed by existence, and then show that the sequence of processes obtained from the culling procedure converges to this limit and has the desired properties. We will start by providing an auxiliary result that describes the dynamics of the process $(R,Z)$ defined in \eqref{eq:defRZ}
 as the solution to a martingale problem.
\begin{proposition} \label{prop:inf_gen_rz}
    For any fixed $f \in C_c^2([0, 1]^d \times \mathbb{R}_+^d)$, the process
    \[
        M(t\wedge\kappa) := f(R(t\wedge\kappa), Z(t\wedge\kappa)) - f(r, z) - \int_0^{t\wedge\kappa} \mathcal{A} f(R(s), Z(s)) ds, 
    \]
    is a local martingale, where $\kappa$ is defined in \eqref{def_kappa}, and
    \begin{align*} 
        & \mathcal{A} f(r, z) = 
        \sum_{i = 1}^d c_i z_i \biggl( \frac{(1 - r_i) r_i}{z_i^2} \partial_{r_i r_i^2} f(r, z) + \partial_{z_i z_i} f(r, z) \biggr) + \sum_{i = 1}^d \sum_{j = 1}^d b_{ij} \biggl( 
        \frac{z_j}{z_i} (r_j - r_i) \partial_{r_i} f(r, z) + z_j \partial_{z_i} f(r, z)
     \biggr) \\
        & + \sum_{i = 1}^d z_i r_i \int_{[0, 1)^d \setminus \{0\}} \Bigl( 
        f\bigl(r + (1 - r) \odot u, z + T_z^{-1}(u)\bigr) - f(r, z) 
    - (1 - r_i) u_i \partial_{r_i} f(r, z) - \xi_i(T_z^{-1}(u)) \partial_{z_i} f(r, z) \Bigr) \mathbf{T}_z\mu_i(du) \\
        & + \sum_{i = 1}^d z_i (1 - r_i) \int_{[0, 1)^d \setminus \{0\}} \Bigl( 
        f\bigl(r - r \odot u, z + T_z^{-1}(u)\bigr) - f(r, z) 
    + r_i u_i \partial_{r_i} f(r, z) - \xi_i(T_z^{-1}(u)) \partial_{z_i} f(r, z) \Bigr) \mathbf{T}_z\mu_i(du),
    \end{align*}
    recalling that $\odot$ represents the Hadamard product. 
\end{proposition}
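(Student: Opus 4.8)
The plan is to derive the asserted operator from the generator of the pair $(X,Y)$ by a change of variables, using that $(X,Y)$ solves the martingale problem for $\mathcal{A}$ in \eqref{eq:branching_two_colonies}. Write $\Phi(x,y)=(r,z)$ for the map $r_i=x_i/(x_i+y_i)$, $z_i=x_i+y_i$, whose inverse $x_i=r_iz_i$, $y_i=(1-r_i)z_i$ is a smooth diffeomorphism on the open set $\{(x,y):x_i+y_i>0 \text{ for all } i\in[d]\}$. For $f\in C_c^2([0,1]^d\times\mathbb{R}_+^d)$ set $g:=f\circ\Phi$, so that $f(R(t),Z(t))=g(X(t),Y(t))$ for $t<\kappa$. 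The goal is to show $\mathcal{A}g(x,y)=(\mathcal{A}f)(\Phi(x,y))$, with $\mathcal{A}f$ the operator in the statement, and then read off the local martingale property from the martingale problem for $(X,Y)$.

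First I would record the elementary derivatives $\partial_{x_i}r_i=(1-r_i)/z_i$, $\partial_{y_i}r_i=-r_i/z_i$ and $\partial_{x_i}z_i=\partial_{y_i}z_i=1$, and use the chain rule to express $\partial_{x_i}g,\partial_{y_i}g$ and the second derivatives through the derivatives of $f$ in $(r,z)$. Substituting $x_i=r_iz_i$, $y_i=(1-r_i)z_i$ into the diffusion part $\sum_i c_i(x_i\partial_{x_ix_i}g+y_i\partial_{y_iy_i}g)$, the first–order $\partial_{r_i}f$ contributions and the mixed $\partial_{r_iz_i}f$ contributions of the $x$– and $y$–summands cancel, leaving exactly $\sum_i c_iz_i\bigl(\tfrac{r_i(1-r_i)}{z_i^2}\partial_{r_ir_i}f+\partial_{z_iz_i}f\bigr)$. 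The linear part $\langle Bx,\nabla_xg\rangle+\langle By,\nabla_yg\rangle$ has $\partial_{z_i}f$–coefficient $\sum_j b_{ij}(x_j+y_j)=\sum_j b_{ij}z_j$ and $\partial_{r_i}f$–coefficient $z_i^{-1}\sum_j b_{ij}(x_j-r_iz_j)=\sum_j b_{ij}\tfrac{z_j}{z_i}(r_j-r_i)$, reproducing the drift of the statement. These two steps are routine differentiation.

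The jump part is where care is needed. A jump $w$ of $X$ sends $(r,z)$ to $(r+(1-r)\odot u,\,z+T_z^{-1}(u))$ and a jump $w$ of $Y$ to $(r-r\odot u,\,z+T_z^{-1}(u))$, where $u=T_z(w)$; pushing $\mu_i$ forward under $T_z$ as in \eqref{fun_T} turns the integrals against $\mu_i$ into integrals against $\mathbf{T}_z\mu_i$ and matches the principal parts $f(r',z')-f(r,z)$ of the two jump integrals, with prefactors $x_i=z_ir_i$ and $y_i=z_i(1-r_i)$. The delicate point is the compensators: \eqref{eq:branching_two_colonies} centers with $\xi_i(w)\partial_{x_i}g$ (resp. $\xi_i(w)\partial_{y_i}g$), whose $\partial_{z_i}f$–part equals the stated $\xi_i(T_z^{-1}(u))\partial_{z_i}f$ (as $\xi_i(T_z^{-1}(u))=1\wedge w_i$), but whose $\partial_{r_i}f$–part carries the factor $(1\wedge w_i)z_i^{-1}$ rather than the $u_i$ appearing in the statement. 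Here I would observe that the resulting discrepancy in the $X$–summand equals $\sum_i r_i(1-r_i)z_i\,\partial_{r_i}f\int_{U_d}\bigl(u_i-(1\wedge w_i)/z_i\bigr)\mu_i(dw)$ and that the $Y$–summand produces exactly its negative, since $X$ and $Y$ carry the same measure $\mu_i$ while their $r$–increments have opposite sign; the two corrections cancel, so the original and the stated compensators yield the same operator. Finiteness of the correcting integral, and the $\mathbf{T}_z\mu_i$–integrability of the stated integrands, follow from \eqref{eq:integrability_conditions_mu}, noting in particular that $\int_{U_d}(1\wedge w_i)^2\mu_i(dw)<\infty$ forces $\mu_i(\{w_i\ge1\})<\infty$.

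The remaining, and main, obstacle is a legitimate application of the martingale problem: $g=f\circ\Phi$ is smooth only on $\{z_i>0\}$ and need not belong to $C_c^2(\mathbb{R}_+^d\times\mathbb{R}_+^d)$, because $\mathrm{supp}\,f$ may touch $\{z_i=0\}$. I would therefore localize with the stopping times $\kappa_n:=\kappa\wedge\inf\{t\ge0:Z_i(t)\notin(1/n,n)\text{ for some }i\}$, on which $Z$ — and, since $w\ge0$ and $f$ has compact support in $z$, every relevant post–jump value $Z+w$ — stays in a compact subset of $(0,\infty)^d$ where $\Phi$ is smooth with bounded derivatives. On that set I replace $g$ by a function $g_n\in C_c^2(\mathbb{R}_+^d\times\mathbb{R}_+^d)$ coinciding with $f\circ\Phi$, apply the martingale problem for $(X,Y)$ to $g_n$, and use the generator identity above to identify the resulting stopped process with $M(\cdot\wedge\kappa_n)$. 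Since $\kappa_n\uparrow\kappa$ as $n\to\infty$, this exhibits $M(t\wedge\kappa)$ as a local martingale, completing the proof.
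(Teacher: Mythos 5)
Your proposal is correct and follows essentially the same route as the paper's proof: change variables via $(x,y)\mapsto(r,z)$, invoke Dynkin's formula / the martingale problem for $(X,Y)$, push the jump measures forward under $T_z$, and exploit the exact cancellation between the $X$- and $Y$-compensator discrepancies in the $\partial_{r_i}f$ terms, whose finiteness follows from \eqref{eq:integrability_conditions_mu}. If anything, your explicit localization via $\kappa_n$ and the smooth extensions $g_n$ is more careful than the paper's own argument, which applies Dynkin's formula directly without addressing that $f\circ\Phi$ fails to be globally $C_c^2$ near $\{z_i=0\}$.
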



\begin{proof} Let us recall that given $x, y \in U_d$, $r \in [0, 1]^d$ and $z \in \mathbb{R}_+^d$ 
are defined through  
\[
    r_i = \frac{x_i}{x_i + y_i}
    \quad\text{and}\quad 
    z_i = x_i + y_i 
    \quad \text{for $i \in [d]$.}
\]
Moreover, $x = z \odot r$ and $y = z - x$. 
By this correspondence and the chain rule we get, for any function $f \in C_c^2([0, 1]^d \times \mathbb{R}_+^d)$ and for each $i \in [d]$, 
\begin{align*}
    \partial_{x_i} f(r, z) = {} 
    & \frac{(1 - r_i)}{z_i} \partial_{r_i} f(r, z) + \partial_{z_i} f(r, z), \\ 
    \partial_{x_i x_i} f(r, z) = {}
    & \frac{2 (1 - r_i)}{z_i^2} (\partial_{r_i z_i} f(r, z) - \partial_{r_i} f(r, z)) 
    + \frac{(1 - r_i)^2}{z_i^2} \partial_{r_i r_i} f(r, z) 
    + \partial_{z_i z_i} f(r, z), \\
    \partial_{y_i} f(r, z) = {} 
    & -\frac{r_i}{z_i} \partial_{r_i} f(r, z) + \partial_{z_i} f(r, z), \\ 
    \partial_{y_i y_i} f(r, z) = {}
    & \frac{2 r_i}{z_i^2} (\partial_{r_i} f(r, z) - \partial_{r_i z_i} f(r, z)) 
    + \frac{r_i^2}{z_i^2} \partial_{r_i r_i} f(r, z) 
    + \partial_{z_i z_i} f(r, z).
\end{align*}
Hence, we can compute $\mathcal{A}f(r, z)$ by considering 
$f(r, z) = f \circ g (x, y)$, where $g(x, y) = (r, z)$, 
and the action of $\mathcal{A}$ over $f \circ g$, given by \eqref{eq:branching_two_colonies}.
Therefore, by definition of $(R, Z)$ and Dynkin's formula, see Proposition IV.1.7 in \cite{ethierMarkovProcessesCharacterization2005}, we deduce that 
\[
    M(t) := f(R(t), Z(t)) - f(r, z) - \int_0^t \mathcal{A} f(R(s), Z(s)) ds,
\]
defines a local semimartingale. 
To obtain Proposition \ref{prop:inf_gen_rz} it remains to compute $\mathcal{A} f(r, z)$. 

Note that 
\[
    \partial_{x_i} f(r, z) + \partial_{y_i} f(r, z) 
    = \frac{(1 - 2 r_i)}{z_i} \partial_{r_i} f(r, z) + 2 \partial_{z_i} f(r, z), 
\]
Hence, by using the identities $x_i = z_i r_i$ and $y_i = z_i (1 - r_i)$ for each $i \in [d]$, we obtain 
\begin{equation}
\begin{split}
    \langle Bx, \nabla_x f(r, z)\rangle + \langle By, \nabla_y f(r, z)\rangle 
    & = \sum_{i = 1}^d \biggl[ 
        \begin{aligned}[t]
            & \sum_{j = 1}^d b_{ij} z_j r_j \biggl( \frac{(1 - r_i)}{z_i} \partial_{r_i} f(r, z) + \partial_{z_i} f(r, z) \biggr) \\
            & + \sum_{j = 1}^d b_{ij} z_j (1 - r_j) \biggl( \frac{r_i}{z_i} \partial_{r_i} f(r, z) + \partial_{z_i} f(r, z) \biggr) \biggr]
        \end{aligned} \\
    & = \sum_{i = 1}^d \sum_{j = 1}^d b_{ij} \biggl( 
        \frac{z_j}{z_i} (r_j - r_i) \partial_{r_i} f(r, z) + z_j \partial_{z_i} f(r, z)
     \biggr)
\end{split}
\end{equation}
On the other hand, we note that 
\[
    r_i \partial_{x_i x_i} f(r, z) + (1 - r_i) \partial_{y_i y_i} f(r, z) 
    = \frac{r_i (1 - r_i)}{z_i^2} \partial_{r_i r_i} f(r, z) + \partial_{z_i z_i} f(r, z),
\]
and therefore 
\begin{equation}
    \sum_{i = 1}^d c_i \bigl( x_i \partial_{x_i x_i} f(r, z) + y_i \partial_{y_i y_i} f(r, z) \bigr)
    = \sum_{i = 1}^d c_i z_i \biggl( \frac{r_i (1 - r_i)}{z_i^2} \partial_{r_i r_i} f(r, z) + \partial_{z_i z_i} f(r, z) \biggr).
\end{equation}
We now turn to the part concerning $\mu_i$. 
In this case, for $w \in U_d$, we note that $(x, y) \mapsto (r, z)$ maps $(x + w, z)$ 
to 
\[
    (\tilde{r}_w, \tilde{z}_w) 
    := \biggl( \biggl( \frac{x_1 + w_1}{x_1 + y_1 + w_1}, \ldots, \frac{x_d + w_d}{x_d + y_d + w_d} \biggr), (x_1 + y_1 + w_1, \ldots, x_d + y_d + w_d) \biggr). 
\] 
Straightforward computations yield $x_i + y_i + w_i = z_i + w_i$ and   
\[
    \frac{x_i + w_i}{x_i + y_i + w_i} 
    = r_i \frac{z_i}{z_i + w_i} + \frac{w_i}{z_i + w_i} 
    = r_i + (1 - r_i) \frac{w_i}{z_i + w_i} 
    = r_i + (1 - r_i) T_z(w)_i,
\]
for $i \in [d]$. 
Thus, 
\begin{align*} \MoveEqLeft
    x_i \int_{U_d} \bigl(f(\tilde{r}_w, \tilde{z}_w) - f(r, z) - \xi_i(w) \partial_{x_i} f(r, z) \bigr) \mu_i(dw) \\
    & = z_i r_i \int_{U_d} \biggl( f(r + (1 - r) \odot T_z(w)) - f(r, z) 
    - \xi_i(w) \frac{(1 - r_i)}{z_i} \partial_{r_i} f(r, z) - \xi_i(w) \partial_{z_i} f(r, z) \biggr) \mu_i(dw). 
\end{align*}
Similarly, if we denote by $(\hat{r}_w, \hat{z}_w)$ the image of $(x, y + w)$ under 
the map $(x, y) \mapsto (r, z)$, 
\begin{align*} \MoveEqLeft
    y_i \int_{U_d} \bigl(f(\hat{r}_w, \hat{z}_w) - f(r, z) - \xi_i(w) \partial_{x_i} f(r, z) \bigr) \mu_i(dw) \\
    & = z_i (1 - r_i) \int_{U_d} \biggl( f(r  - r \odot T_z(w)) - f(r, z) 
    + \xi_i(w) \frac{r_i}{z_i} \partial_{r_i} f(r, z) - \xi_i(w) \partial_{z_i} f(r, z) \biggr) \mu_i(dw). 
\end{align*}
Before applying a change of variable that will give us the pushforward measure $\mathbf{T}_z\mu_i$ in the previous expressions, note that condition \eqref{eq:integrability_conditions_mu} imply  
\[
    \int_{U_d} \left|\frac{1}{z_i} \xi_i(w) - T_z(w)_i \right| \mu_i(dw) 
    \leq \frac{1}{z_i^2} \int_{U_d} w_i^2 1_{\{w_i \leq 1\}} \mu_i(dw)
    + \frac{3}{1 \wedge z_i} \mu_i(\{w_i > 1\}) < \infty. 
\]
Hence, by adding and substracting $z_i r_i (1 - r_i) \int_{U_d} (\xi_i(w) (1 - r_i) / z_i - T_z(w)_i) \mu_i(dw) \partial_{r_i} f(r, z)$ we get the first equality in 
\begin{align*}\MoveEqLeft 
    x_i \int_{U_d} \bigl(f(\tilde{r}_w, \tilde{z}_w) - f(r, z) - \xi_i(w) \partial_{x_i} f(r, z) \bigr) \mu_i(dw)
    + y_i \int_{U_d} \bigl(f(\hat{r}_w, \hat{z}_w) - f(r, z) - \xi_i(w) \partial_{x_i} f(r, z) \bigr) \mu_i(dw) \\
    & = z_i r_i \int_{U_d} \biggl( f(r + (1 - r) \odot T_z(w)) - f(r, z) 
    - (1 - r_i) T_z(w)_i \partial_{r_i} f(r, z) - \xi_i(w) \partial_{z_i} f(r, z) \biggr) \mu_i(dw) \\
    & + z_i (1 - r_i) \int_{U_d} \biggl( f(r  - r \odot T_z(w)) - f(r, z) 
    + r_i T_z(w)_i \partial_{r_i} f(r, z) - \xi_i(w) \partial_{z_i} f(r, z) \biggr) \mu_i(dw) \\
    & = z_i r_i \int_{[0, 1)^d \setminus \{0\}} \Bigl( 
        f\bigl(r - r \odot u, z + T_z^{-1}(u)\bigr) - f(r, z) 
    - (1 - r_i) u_i \partial_{r_i} f(r, z) - \xi_i(T_z^{-1}(u)) \partial_{z_i} f(r, z) \Bigr) \mathbf{T}_z\mu_i(du) \\ 
    & + z_i (1 - r_i) \int_{[0, 1)^d \setminus \{0\}} \Bigl( 
        f\bigl(r - r \odot u, z + T_z^{-1}(u)\bigr) - f(r, z) 
    + r_i u_i \partial_{r_i} f(r, z) - \xi_i(T_z^{-1}(u)) \partial_{z_i} f(r, z) \Bigr) \mathbf{T}_z\mu_i(du),
\end{align*}
while the second inequality follows by a simple change of variable and that $T_z$ is injective. 
Then, by the previous identities, the form of $\mathcal{A}f(r, z)$ displayed in Proposition \ref{prop:inf_gen_rz} follows. \end{proof}

\subsection{Pathwise uniqueness} 
We start by stating that if a process $R = (R_1, \ldots, R_d)$ satisfies 
\eqref{eq:sde_d_colonies} and its initial value is in $[0, 1]^d$, then with 
probability one, it will remain in the set $[0, 1]^d$. The proof is a straightforward
modification of Proposition 2.1 in \cite{fuStochasticEquationsNonnegative2010} and is therefore is omitted.

\begin{proposition} 
    If $R$ satisfies \eqref{eq:sde_d_colonies} and $\mathbb{P}(R(0) \in [0, 1]^d) = 1$, 
    then $\mathbb{P}(R(t) \in [0, 1]^d \text{ for all } t \geq 0) = 1$. 
\end{proposition}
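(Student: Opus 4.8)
The plan is to confine $R$ to $[0,1]^d$ by a Yamada--Watanabe penalty argument applied to the negative parts of the coordinates, following closely Proposition~2.1 of \cite{fuStochasticEquationsNonnegative2010}. First I would reduce everything to a lower bound. The system \eqref{eq:sde_d_colonies} is invariant under the involution $r \mapsto (1-r_1,\dots,1-r_d)$ once one exchanges the roles of $N_1^j$ and $N_2^j$ and replaces $B$ by $-B$: the migration drift coefficient $R_j-R_i$ maps to $\widetilde R_i-\widetilde R_j$ with the same nonnegative weights, the diffusion coefficient $R_i(1-R_i)$ is unchanged, and the up-jump $u_i(1-R_i)$ at rate $\propto R_j$ becomes exactly the down-jump $-u_i\widetilde R_i$ at rate $\propto 1-\widetilde R_j$ (and vice versa). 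Hence $\widetilde R:=(1-R_1,\dots,1-R_d)$ solves an equation of the same type, and it suffices to prove that any solution with $R(0)\in[0,1]^d$ satisfies $R_i(t)\ge0$ for all $i$; the bound $R_i\le1$ then follows by applying the lower bound to $\widetilde R$ (equivalently, one runs the identical argument with the penalty $(R_i-1)^+$).

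For the lower bound I would use the standard approximating sequence: choose $1=a_0>a_1>\cdots\downarrow0$ with $\int_{a_k}^{a_{k-1}}u^{-1}\,du=k$, nonnegative $\psi_k$ supported on $(a_k,a_{k-1})$ with $\int\psi_k=1$ and $\psi_k(u)\le 2/(ku)$, and set $\phi_k(x)=\int_0^{x^-}\!\!\int_0^y\psi_k(u)\,du\,dy$. These satisfy $\phi_k\ge0$, $\phi_k(x)\uparrow x^-$, $\phi_k$ is nonincreasing with $\phi_k'\in[-1,0]$ and $\phi_k'(x)=0$ for $x\ge0$, and $\phi_k''\ge0$ is supported on $\{-a_{k-1}<x<-a_k\}$ with $\phi_k''(x)\le 2/(k|x|)$. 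After localizing by $T_m:=\inf\{t:\sum_i|R_i(t)|\ge m\}$ (finite-time non-explosion of the c\`adl\`ag solution gives $T_m\uparrow\infty$), I would apply It\^o's formula to $\sum_i\phi_k(R_i(t\wedge T_m))$ and take expectations, so that $\phi_k(R_i(0))=0$ and the continuous martingale part vanishes.

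The three surviving contributions are then controlled separately. Writing the drift as $b_i=\sum_{j\neq i}\beta_{ij}(R_j-R_i)$ with $\beta_{ij}=b_{ij}z_j/z_i+z_j\int u_i\,\mathbf{T}_z\mu_j(du)\ge0$, the bound $\phi_k'(R_i)(R_j-R_i)=\phi_k'(R_i)R_j-\phi_k'(R_i)R_i\le R_j^-$ (the term $-\phi_k'(R_i)R_i\le0$, while $\phi_k'(R_i)R_j\le R_j^-$) yields $\sum_i\phi_k'(R_i)b_i\le C\sum_j R_j^-$, with $C$ the largest column sum of the $\beta_{ij}$; this holds for each $k$ without any limit. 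The second-order diffusion term $\tfrac12\phi_k''(R_i)\sigma_i^2$, with $\sigma_i^2=\tfrac{2c_i}{z_i}R_i(1-R_i)$, is supported on $\{R_i<0\}$, where $\sigma_i^2$ vanishes (all coefficients carry $1_{\{R\in[0,1]^d\}}$ by the footnote to \eqref{eq:sde_d_colonies}); even ignoring this one has $\phi_k''(R_i)\sigma_i^2\le C'/k$ on the support of $\phi_k''$, so the term is $O(1/k)$. Finally, the jump contribution $J(t)=\sum_{s\le t}[\phi_k(R_i(s))-\phi_k(R_i(s-))]$ is nonpositive pathwise: since $\phi_k$ is nonincreasing and constant on $[0,\infty)$, a direct inspection of the two jump families shows each jump either raises $R_i$ or leaves $\phi_k(R_i)$ unchanged -- an up-jump $u_i(1-R_i)$ increases $R_i$ when $R_i<1$ and keeps $R_i>1$, while a down-jump $-u_iR_i$ pushes a negative $R_i$ toward $0$ and keeps a nonnegative $R_i$ nonnegative -- so $\mathbb{E}[J(t\wedge T_m)]\le0$ and the jumps only help.

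Combining the estimates gives $\mathbb{E}\bigl[\sum_i\phi_k(R_i(t\wedge T_m))\bigr]\le C\int_0^t\mathbb{E}\bigl[\sum_j R_j^-(s\wedge T_m)\bigr]\,ds+C'/k$; letting $k\to\infty$ (monotone convergence, $\phi_k(R_i)\uparrow R_i^-$) yields $g(t):=\mathbb{E}\bigl[\sum_i R_i^-(t\wedge T_m)\bigr]\le C\int_0^t g(s)\,ds$ with $g(0)=0$, so Gronwall forces $g\equiv0$. Thus $R_i(t\wedge T_m)\ge0$ almost surely; sending $m\to\infty$ and using right-continuity to upgrade the statement from fixed $t$ to all $t$ simultaneously gives $R_i\ge0$ for all $i$ and $t$, and the symmetry step then completes the confinement to $[0,1]^d$. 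I expect the main obstacle to be the jump term: making rigorous that the accumulated jump contribution to the penalty is nonpositive, which rests on the structural fact that both jump families always move each coordinate back toward $[0,1]$, together with the localization needed to turn the compensated Poisson integrals into genuine martingales and to secure integrability of every term. The square-root degeneracy of the diffusion at the boundary, normally the delicate point, is comparatively mild here because the coefficient vanishes on the support of $\phi_k''$.
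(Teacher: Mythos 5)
Your overall plan --- Yamada--Watanabe penalty functions applied to the negative parts, localization, Gronwall --- is exactly the argument the paper points to (it defers to Proposition 2.1 of \cite{fuStochasticEquationsNonnegative2010} and omits the details), and several steps are sound: the involution $r\mapsto(1-r_1,\dots,1-r_d)$ really does exchange the two jump families and reduces everything to the lower bound, and your drift estimate $\phi_k'(R_i)(R_j-R_i)\le R_j^-$ and the diffusion estimate are fine. The genuine gap is the jump term. Equation \eqref{eq:sde_d_colonies} is driven by the \emph{compensated} measures $\widetilde N_1^j,\widetilde N_2^j$, and your argument silently replaces them by raw jump sums: the quantity $J(t)=\sum_{s\le t}[\phi_k(R_i(s))-\phi_k(R_i(s-))]$ can be isolated in It\^o's formula only if the jumps are absolutely summable, i.e. only if $\int u_i\,\mathbf{T}_z\mu_j(du)<\infty$. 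For $j\ne i$ this holds by \eqref{eq:integrability_conditions_mu} (and those compensators are then already inside your coefficients $\beta_{ij}$, so counting the same jumps raw on top of that double-counts them), but for the diagonal term $j=i$ the standing assumptions give only $\int u_i^2\,\mathbf{T}_z\mu_i(du)<\infty$; the first moment may be infinite, e.g.\ for stable mechanisms with $\alpha\in(1,2)$, which are the motivating examples. On the event that $R_i$ spends positive time below $0$ --- precisely the event you must rule out, so you cannot assume it away --- the jumps driven by $N_2^i$ have a.s.\ non-summable total size, $J$ is not defined, and the decomposition you invoke does not exist. If instead you keep the diagonal integrals compensated, as you must, It\^o's formula produces terms $\iint D_{h_i^i}\phi_k(R_i)\,dv\,\mathbf{T}_z\mu_i(du)$ with $D_y\phi_k(x)=\phi_k(x+y)-\phi_k(x)-y\phi_k'(x)$, and by convexity of $\phi_k$ these are \emph{nonnegative}: the sign is opposite to your claim, so ``the jumps only help'' cannot be the argument.

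What is true, and what closes the proof, is that these nonnegative terms are small. First, $D_{g_i^i}\phi_k(R_i)\equiv0$: the indicator $1_{\{v\le z_iR_i\}}$ forces $R_i>0$, where $\phi_k$ and $\phi_k'$ vanish and the post-jump value $(1-u_i)R_i+u_i$ stays positive. Second, $D_{h_i^i}\phi_k(R_i)$ is nonzero only when $R_i<0$, where the post-jump value is $(1-u_i)R_i\le0$; splitting the integral at $u_i=1/2$, the range $u_i\le1/2$ contributes $O(1/k)$ after localization, by $\phi_k''\le 2/(k|\cdot|)$ and $\int u_i^2\,\mathbf{T}_z\mu_i(du)<\infty$ (the bound of Lemma 3.1 in \cite{liStrongSolutionsJumptype2012}, exactly as in the paper's pathwise-uniqueness proof), while the range $u_i>1/2$ is bounded using $D_y\phi_k\le y=u_iR_i^-$ and $\mathbf{T}_z\mu_i(\{u_i>1/2\})\le 4\int u_i^2\,\mathbf{T}_z\mu_i(du)<\infty$, giving a contribution $\le C\,R_i^-$ after localization. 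Both pieces feed into your Gronwall inequality and the argument then closes. (Alternatively, and most cheaply: if you take the footnote to \eqref{eq:sde_d_colonies} literally and keep the factor $1_{\{R(t)\in[0,1]^d\}}$ on \emph{every} coefficient, then every Lebesgue-integral term in It\^o's formula vanishes identically --- jumps fired from $[0,1]^d$ land in $[0,1]^d$, and $\phi_k'$, $\phi_k''$ vanish on $[0,1]$ --- so $\mathbb{E}[\phi_k(R_i(t\wedge T_m))]=0$ outright; but that observation must then replace, not accompany, your sign claim for the jump sum.)
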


In what follows we will assume that the initial value $R(0)$ is in $[0, 1]^d$. 
In particular, if $R(0) = r \in [0, 1]^d$, we emphazise the dependence on the 
initial value by denoting the process as $R^{(r)}$. 
We now state and prove the pathwise uniqueness of solutions with values in 
$[0, 1]^d$ that satisfy \eqref{eq:sde_d_colonies}. 

\begin{proposition} \label{prop:pathwise_uniqueness}
    Pathwise uniqueness holds for the solution of \eqref{eq:sde_d_colonies}. 
\end{proposition}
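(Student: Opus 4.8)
The plan is to run the Yamada--Watanabe argument in the form adapted to square-root diffusion coefficients and state-dependent jumps by Fu and Li \cite{fuStochasticEquationsNonnegative2010}, whose Proposition~2.1 was already invoked for the preceding proposition. First I would place two solutions $R = R^{(z,r)}$ and $\widetilde R = \widetilde R^{(z,r)}$ of \eqref{eq:sde_d_colonies} on a common probability space, driven by the \emph{same} Brownian motion $B$ and the \emph{same} Poisson random measures $N_1^j, N_2^j$ and started at the same $r \in [0,1]^d$; by the previous proposition both remain in $[0,1]^d$. Writing $D_i(t) := R_i(t) - \widetilde R_i(t)$, the difference solves an SDE driven by $B$ and the compensated measures $\widetilde N_1^j, \widetilde N_2^j$ with the coefficient differences as integrands, and the aim is to show $\mathbb{E}\bigl[\sum_i |D_i(t)|\bigr] = 0$ for every $t$, which is pathwise uniqueness.

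The core is to apply It\^o's formula to $\phi_k(D_i(t))$, summed over $i \in [d]$, where $(\phi_k)_k$ is the classical sequence of nonnegative even $C^2$ functions with $\phi_k(x) \uparrow |x|$, $|\phi_k'| \le 1$ and $0 \le \phi_k''(x) \le \tfrac{2}{k|x|}$, whose support shrinks to the origin. After taking expectations the martingale part vanishes and three groups of terms remain. The migration drift in \eqref{eq:sde_d_colonies} is affine in $R$, so its increment is $\sum_{j \neq i} \kappa_{ij}(D_j - D_i)$ with finite coefficients $\kappa_{ij}$ (finiteness of $\int u_i\, \mathbf{T}_z\mu_j(du)$ for $i \neq j$ following from \eqref{eq:integrability_conditions_mu}); paired with $|\phi_k'| \le 1$ this is bounded by $C\sum_{l}|D_l|$. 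The Brownian term contributes $\tfrac{c_i}{z_i}\phi_k''(D_i)\bigl(\sqrt{R_i(1-R_i)} - \sqrt{\widetilde R_i(1-\widetilde R_i)}\bigr)^2$; since $x \mapsto \sqrt{x(1-x)}$ is $\tfrac12$-H\"older on $[0,1]$, this square is at most $C|D_i|$, and with $\phi_k''(D_i)|D_i| \le \tfrac{2}{k}$ the whole contribution is of order $1/k$ and vanishes as $k \to \infty$.

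The delicate part, which I expect to be the main obstacle, is the two families of jump terms (the up-moves driven by $N_1^j$ and the down-moves driven by $N_2^j$), because each jump size depends on the state both through its magnitude ($u_i(1-R_i)$, resp.\ $-u_i R_i$) and through the indicator threshold ($z_i R_j$, resp.\ $z_i(1-R_j)$). For a fixed driving atom the increment $\Delta_i^j$ of $D_i$ splits according to the two competing indicators. On the \emph{overlap} region, where both indicators are on, one finds $\Delta_i^j = -u_i D_i$ for both the up- and the down-moves, which is sign-aligned with $D_i$; hence the intermediate points of the Taylor remainder $\phi_k(D_i + \theta \Delta_i^j)$ stay comparable to $|D_i|$, the factor $1/|D_i + \theta\Delta_i^j|$ from $\phi_k''$ is controlled, and the second-order estimate gives a bound of order $\tfrac{|D_i|}{k}\int u_i^2\, \mathbf{T}_z\mu_j(du)$, finite by \eqref{eq:integrability_conditions_mu} and vanishing as $k \to \infty$. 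On the \emph{indicator-gap} region the Lebesgue length in $v$ equals $z_i|D_j|$, by $\int_0^\infty |1_{\{v \le a\}} - 1_{\{v \le b\}}|\,dv = |a-b|$; there the jump magnitude is at most $u_i$, so for $i \neq j$ the first-order bound $|\phi_k(D_i + \Delta_i^j) - \phi_k(D_i) - \phi_k'(D_i)\Delta_i^j| \le 2u_i$ integrates to $C|D_j|\int u_i\, \mathbf{T}_z\mu_j(du)$, finite by the off-diagonal part of \eqref{eq:integrability_conditions_mu}. For the diagonal terms $i = j$ the gap region is governed by $D_i$ with $\Delta_i^i$ again sign-aligned, so the same second-order device renders that piece of order $1/k$ as well, leaving no uncontrolled contribution.

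Collecting the estimates, summing over $i$, letting $k \to \infty$ and using monotone and dominated convergence leaves the closed inequality $\mathbb{E}\bigl[\sum_i |D_i(t)|\bigr] \le C \int_0^t \mathbb{E}\bigl[\sum_j |D_j(s)|\bigr]\,ds$, whence Gronwall's lemma forces $\mathbb{E}\bigl[\sum_i |D_i(t)|\bigr] = 0$ for all $t$. The only genuinely new feature compared with the one-dimensional computations in \cite{fuStochasticEquationsNonnegative2010,caballeroRelativeFrequencyTwo2023} is the coupling across coordinates, both through the migration drift and through the cross-type thresholds $1_{\{v \le z_i R_j\}}$ with $j \neq i$; these are harmless because, after summation over $i$, every bound closes on the single quantity $\sum_j \mathbb{E}|D_j|$.
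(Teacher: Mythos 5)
Your proposal is correct and follows essentially the same route as the paper: the same Yamada--Watanabe functions $\phi_k$, It\^o's formula applied to $\phi_k$ of the coordinate differences, the $\tfrac12$-H\"older bound disposing of the diffusion term, a sign-aligned second-order estimate (the content of Lemma 3.1 in \cite{liStrongSolutionsJumptype2012}) for the diagonal jump terms, a first-order estimate for the cross-type jump terms, and Gronwall to close. The only difference is bookkeeping: the paper splits the drift as $m_i=\widetilde m_i+\bar m_i$ and treats the off-diagonal jumps in uncompensated form (their linear compensator cancelling $\bar m_i$), whereas you keep the full drift and the compensated second-order form throughout, which yields the same estimates.
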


\begin{proof}
    For $x, u \in [0, 1]^d$, $i, j \in [d]$, and $v \in \mathbb{R}_+$ we define  
    the functions
    \begin{align*}
        m_i(x) 
        & =  \sum_{k \in [d] \setminus \{i\}} 
        \biggl( b_{ik} \frac{z_k}{z_i} + z_k \int_{[0, 1)^d \setminus \{0\}} u_i \mathbf{T}_z\mu_k(du) \biggr) 
        (x_k - x_i), \\
        \bar{m}_i(x)
        & = \sum_{k \in [d] \setminus \{i\}} 
        \biggl( z_k \int_{[0, 1)^d \setminus \{0\}} u_i \mathbf{T}_z \mu_k(du) \biggr) (x_k - x_i), \\
        \sigma_i(x) 
        & = \sqrt{\frac{2 c_i}{z_i} x_i (1 - x_i)} ,\\
        g_i^j(x, u, v) 
        & = u_i (1 - x_i) 1_{\{v \leq z_j x_j\}}, \\
        h_i^j(x, u, v) 
        & = -u_i x_i 1_{\{v \leq z_j (1 - x_j)\}}.
    \end{align*}
    Consider $i \in [d]$ fixed. 
    As $\widetilde{m}_i = m_i - \bar{m}_i$ is affine, there exists a constant $K_m^i > 0$ such that for any 
    $x, y \in [0, 1]^d$, $|\widetilde{m}_i(x) - \widetilde{m}_i(y)| \leq K_m^i \lVert x - y\rVert$. 
    Now, it is easily seen that for any choice of $x, y \in [0, 1]^d$, 
    \[
        |\sigma_i(x) - \sigma_i(y) |^2 
        \leq \frac{2 c_i}{z_i} |x_i - y_i|.
    \]
    Finally, fixing $i, j \in [d]$, it is clear that the functions 
    $x \mapsto x_i + g_i^j(x, u, v)$ and $x \mapsto x_i + h_i^j(x, u, v)$ 
    are increasing in $x_i$ for any $(u, v) \in [0, 1]^d \times \mathbb{R}_+$, 
    while the inequalities 
    \begin{align}
        |g_i^j(x, u, v) - g_i^j(y, u, v)|
        & \leq u_i |x_i - y_i| 1_{\{v \leq z_j (x_j \wedge y_j)\}} 
        + u_i 1_{\{z_j (x_j \wedge y_j) < v \leq z_j (x_j \vee y_j)\}}, 
        \label{eq:preLip_g}\\
        |h_i^j(x, u, v) - h_i^j(x, u, v)| 
        & \leq u_i |x_i - y_i| 1_{\{v \leq z_j (1 - x_j \vee y_j)\}} 
        + u_i 1_{\{z_j (1 - x_j \vee y_j) < v \leq z_j (1 - x_j \wedge y_j)\}}, 
        \label{eq:preLip_h}
    \end{align}
    imply 
    \begin{align} \MoveEqLeft
        \int_{[0, 1)^d \setminus \{0\}} \int_0^\infty \bigl( (g_i^i(x, u, v) - g_i^i(y, u, v))^2 
        + (h_i^i(x, u, v) - h_i^i(y, u, v))^2 \bigr) dv \mathbf{T}_z\mu_i(du) \notag\\
        & \leq \biggl( 4 z_i \int_{[0, 1]^d} u_i^2 \mathbf{T}_z\mu_i(du) \biggr) 
        |x_i - y_i|,
    \end{align}
    and for $j \in [d] \setminus \{i\}$, 
    \begin{align} \MoveEqLeft
        \int_{[0, 1)^d \setminus \{0\}} \int_0^\infty \bigl( |g_i^j(x, u, v) - g_i^j(y, u, v)| 
        + |h_i^j(x, u, v) - h_i^j(y, u, v)|\bigr) dv \mathbf{T}_z\mu_j(du) 
        \label{eq:gij_lipschitz}\notag\\
        & \leq \biggl( 2 z_j \int_{[0, 1]^d} u_i \mathbf{T}_z\mu_j(du) \biggr) 
        \bigl( |x_i - y_i| + |x_j - y_j| \bigr).
    \end{align}
    The result now follows by modifications of the proofs of Theorem 3.1 
    in \cite{maStochasticEquationsTwotype2014} and Theorem 3.2 in 
    \cite{liStrongSolutionsJumptype2012}. 

    Consider the sequence of functions $\{\phi_k\}$ defined 
    in Section 3.1 of \cite{liStrongSolutionsJumptype2012}. 
    We recall that these functions satisfy the properties
    \begin{enumerate}
        \item $\phi_k(z) \to |z|$ non-decreasingly as $k \to \infty$ for $z \in \mathbb{R}$;
        \item $0 \leq \mathrm{sgn}(z) \phi'_k(z) \leq 1$, for $z\in\mathbb{R}$, where $\mathrm{sgn}$ is the sign function;
        \item $0 \leq |z| \phi_k''(z) \leq 2/k$ for $z \in \mathbb{R}$. 
    \end{enumerate}
    Now, if $R$ and $S$ are two solutions to \eqref{eq:sde_d_colonies} and we 
    define $\zeta = R - S$, then $\zeta$ is a solution to 
    \begin{align*}
        d \zeta_i(t)= {}
        & \bigl( m_i(R(t)) - m_i(S(t)) \bigr) dt 
        + \bigl( \sigma_i(R(t)) - \sigma_i(S(t)) \bigr) dB_i(t) \\
        & + \sum_{j = 1}^d \int_{[0, 1)^d \setminus \{0\}} \int_0^\infty 
        \bigl( g_i^j(R(t-), u, v) - g_i^j(S(t-), u, v) \bigr) 
        \widetilde{N}_1^j(dt, du, dv) \\
        & + \sum_{j = 1}^d \int_{[0, 1)^d \setminus \{0\}} \int_0^\infty 
        \bigl( h_i^j(R(t-), u, v) - h_i^j(S(t-), u, v) \bigr) 
        \widetilde{N}_2^j(dt, du, dv),
    \end{align*}
    for $i \in [d]$. 
    Then, for any $k \in \mathbb{N}$, Itô's formula yields 
    \begin{align*}
        \phi_k(\zeta_i(t)) = {}
        & M_t^{\phi_k} + \phi_k(\zeta_i(0)) 
        + \int_0^t \phi_k'(\zeta_i(v)) \bigl(\widetilde{m}_i(R(v)) - \widetilde{m}_i(S(v))\bigr) dv\\ 
        &+ \int_0^t \phi_k''(\zeta_i(v)) \frac{1}{2} \bigl(\sigma_i(R(v)) - \sigma_i(S(v))\bigr)^2 dv \\
        & + \int_0^t \int_{[0, 1)^d \setminus \{0\}} \int_0^\infty 
        D_{g_i^i(R(v) u, w) - g_i^i(S(v),u, w)} \phi_k(\zeta_i(v)) 
        dw \mathbf{T}_z \mu_i(du) dv \\
        & + \int_0^t \int_{[0, 1)^d \setminus \{0\}} \int_0^\infty 
        D_{h_i^i(R(v),u, w) - h_i^i(S(v),u, w)} \phi_k(\zeta_i(v)) 
        dw \mathbf{T}_z \mu_i(du) dv \\
        & + \sum_{j \in [d] \setminus \{i\}} \int_0^t \int_{[0, 1)^d \setminus \{0\}} \int_0^\infty 
        \Delta_{g_i^j(R(v), u, w) - g_i^j(S(v), u, w)} \phi_k(\zeta_i(v)) 
        dw \mathbf{T}_z \mu_j(du) dv \\
        & + \sum_{j \in [d] \setminus \{i\}} \int_0^t \int_{[0, 1)^d \setminus \{0\}} \int_0^\infty 
        \Delta_{h_i^j(R(v), u, w) - h_i^j(S(v), u, w)} \phi_k(\zeta_i(v)) 
        dw \mathbf{T}_z \mu_j(du) dv,
    \end{align*}
    where $M^{\phi_k}$ is a martingale due to the fact that $\zeta_i(v) \in [-1, 1]$ for 
    all $v \in \mathbb{R}_+$ almost surely in conjunction with the bounds \eqref{eq:phi_prime}, 
    \eqref{eq:phi_pprime}, \eqref{eq:delta_muj} and 
    \eqref{eq:D_mui}, and where for $x,y\in\mathbb{R}$,
    \[
        \Delta_y \phi_k(x) = \phi_k(x + y) - \phi_k(x) 
        \quad\text{and}\quad 
        D_y \phi_k(x) = \Delta_y \phi_k(x) - y \phi_k'(x) . 
    \]
    We now note that 
    \begin{align}
        \phi_k'(\zeta_i(v)) \bigl( \widetilde{m}_i(R(v)) - \widetilde{m}_i(S(v)) \bigr) 
        & \leq |\widetilde{m}_i(R(v)) - \widetilde{m}_i(S(v))| \leq K_m^i \lVert \zeta(v) \rVert, 
        \label{eq:phi_prime} \\
        \phi_k''(\zeta_i(v)) \bigl( \sigma_i(R(v)) - \sigma_i(S(v)) \bigr)^2 
        & \leq \frac{2 c_i}{z_i} \phi_k''(\zeta_i(v)) |\zeta_i(v)| 
        \leq \frac{4 c_i}{k z_i}, \label{eq:phi_pprime} 
    \end{align}
    while 
    for $j \in [d] \setminus \{i\}$, 
    \begin{align}   \label{eq:delta_muj}
        \int_{[0, 1)^d \setminus \{0\}} &\int_0^\infty 
        \Delta_{g_i^j(R(v), u, w) - g_i^j(S(v), u, w)} \phi_k(\zeta_i(v)) 
        dw \mathbf{T}_z \mu_j(du) 
        + \int_{[0, 1)^d \setminus \{0\}} \int_0^\infty 
        \Delta_{h_i^j(R(v), u, w) - h_i^j(S(v) u, w)} \phi_k(\zeta_i(v)) 
        dw \mathbf{T}_z \mu_j(du) \notag\\
        & \leq 
        \int_{[0, 1)^d \setminus \{0\}} \int_0^\infty \bigl( |g_i^j(R(v), u, w) - g_i^j(S(v), u, w)| 
        + |h_i^j(R(v), u, w) - h_i^j(S(v), u, w)| \bigr) dw \mathbf{T}_z\mu_j(du) \notag\\
        & \leq \biggl( 2 z_j \int_{[0, 1)^d \setminus \{0\}} u_i \mathbf{T}_z\mu_j(du) \biggr) 
        \bigl( |\zeta_i(v)| + |\zeta_j(v)| \bigr).
    \end{align}
    On the other hand, by Lemma 3.1 in \cite{liStrongSolutionsJumptype2012}, 
    we deduce the first inequality in 
    \begin{align} \MoveEqLeft \label{eq:D_mui}
        \int_{[0, 1)^d \setminus \{0\}} \int_0^\infty 
        D_{g_i^i(R(v)u, w) - g_i^i(S(v), u, w)} \phi_k(\zeta_i(v)) 
        dw \mathbf{T}_z \mu_i(du) 
        + \int_{[0, 1)^d \setminus \{0\}} \int_0^\infty 
        D_{h_i^i(R(v), u, w) - h_i^i(S(v), u, w)} \phi_k(\zeta_i(v)) 
        dw \mathbf{T}_z \mu_i(du) \notag\\
        & \leq \frac{2}{k |\zeta_v^i|} \int_{[0, 1)^d \setminus \{0\}} \int_0^\infty 
        \bigl( (g_i^i(R(v)u, w) - g_i^i(S(v), u, w))^2 + 
        (h_i^i(R(v), u, w) - h_i^i(S(v), u, w))^2 \bigr) 
        dw \mathbf{T}_z \mu_i(du) \notag\\
        & \leq \frac{8 z_i}{k} \int_{[0, 1)^d \setminus \{0\}} u_i^2 \mathbf{T}_z\mu_i(du).
    \end{align}

    Then, 
    \begin{align*}
        \phi_k(\zeta_i(t)) \leq {}
        & M_t^{\phi_k} + \phi_k(\zeta_i(0)) 
        + \int_0^t K_m^i \lVert \zeta(v) \rVert dv 
        + \int_0^t \frac{2 c_i}{k z_i} dv + \int_0^t \frac{8 z_i}{k} \int_{[0, 1)^d \setminus \{0\}} u_i^2 \mathbf{T}_z \mu_i(du) dv\\
        & 
        + \sum_{j \in [d] \setminus \{i\}} \int_0^t 
        \biggl( 2 z_j \int_{[0, 1)^d \setminus \{0\}} u_i \mathbf{T}_z \mu_j(du) \biggr) 
        ( |\zeta_i(v)| + |\zeta_j(v)| ) dv. 
    \end{align*}
    By taking expectations on both sides of the previous inequality and using the fact that $\zeta(t) \in[-1,1]^d$ for all $t \geq 0$, we obtain
    \begin{align*}
        \mathbb{E}\bigl[ \phi_k(\zeta_i(t)) \bigr] \leq {} 
        & \frac{t}{k} \biggl( \frac{2 c_i}{z_i} + 8 z_i \int_{[0, 1)^d \setminus \{0\}} u_i^2 \mathbf{T}_z \mu_i(du)  \biggr) 
        + \mathbb{E}[\phi_k(\zeta_i(0))] + \int_0^t K_i \mathbb{E}[\lVert \zeta(v) \rVert] dv,
    \end{align*}
    with 
    \[
        K_i = K_m^i 
        + 4 \sum_{j \in [d] \setminus \{i\}} z_j \int_{[0, 1]^d} u_i \mathbf{T}_z \mu_j(du). 
    \]
    Letting $k \to \infty$ we deduce, using that $\phi_k(z) \to \lvert z \rvert$ non-decreasingly 
    and the Monotone Convergence Theorem,
    \[
        \mathbb{E}[|\zeta_i(t)|] 
        \leq \mathbb{E}[|\zeta_i(0)|] 
        + \int_0^t K_i \mathbb{E}[\lVert \zeta(v) \rVert] dv .
    \]
    As $i$ was arbitrary, this in turn implies the existence of a constant 
    $K > 0$ such that 
    \begin{equation} \label{eq:pre_gronwall}
        \mathbb{E}[\lVert \zeta(t) \rVert] \leq \sqrt{d} \mathbb{E}[\lVert \zeta(0) \rVert]
        + \int_0^t K \mathbb{E}[\lVert \zeta(v) \rVert] dv . 
    \end{equation}
    If $R(0) = S(0)$, then $\zeta(0) = 0$, and the result follows from 
    Gronwall's lemma and the right continuity of $t \mapsto \zeta(t)$. 
\end{proof}

\subsection{Existence of a strong solution} 
Having proved that the pathwise uniqueness holds for \eqref{eq:sde_d_colonies} 
we now turn to the question of the existence of a strong solution, which 
\emph{a fortiori} will be unique. 
In fact, we only need to ensure that a weak solution exists due to the Yamada--Watanabe Theorem, see for instance 
\cite[p.~104]{situTheoryStochasticDifferential2005}. 
This can (and will) be proved by showing that for any function 
$f \in \mathcal{C}^2([0, 1]^d)$, 
\begin{equation} \label{eq:martingale_problem}
    f(R(t)) - f(R(0)) - \int_0^t \mathcal{A}^{(z)} f(R(s)) ds, \quad t \geq 0,
\end{equation}
is a martingale, which is the final step in proving the existence part of Theorem \ref{theo:culling_limit}. 

Before turning to the proof, note that \eqref{eq:gen_d_colonies} may be rewritten as 
\begin{align} \label{eq:gen_d_colonies_mod}
    \mathcal{A}^{(z)} f(r) = {}
    & \sum_{i = 1}^d  
    \sum_{j \in [d] \setminus \{i\}} 
    \biggl( b_{ij} \frac{z_j}{z_i} + z_j \int_{[0, 1]^d} u_i \mathbf{T}_z \mu_j(du) \biggr) 
    (r_j - r_i)  
    \partial_i f(r) + \sum_{i = 1}^d \frac{c_i}{z_i} r_i (1 - r_i) \partial_{ii} f(r) \notag\\
    & + \sum_{i = 1}^d z_i r_i \int_{[0, 1)^d \setminus \{0\}} 
    \bigl( f(r + (1 - r) \odot u) - f(r) - 
    \langle(1 - r) \odot u, \nabla f(r)\rangle \bigr)
    \mathbf{T}_z \mu_i(du)\notag \\
    & + \sum_{i = 1}^d z_i (1 - r_i) \int_{[0, 1)^d \setminus \{0\}} 
    \bigl( f(r - r \odot u) - f(r) + 
    \langle r \odot u, \nabla f(r)\rangle \bigr)
    \mathbf{T}_z \mu_i(du).
\end{align}
Certainly, as $\int_{[0, 1)^d \setminus \{0\}} u_i \mathbf{T}_z\mu_j(du) < \infty$ 
for $i, j \in [d]$ with $i \neq j$, to see the equivalence between 
\eqref{eq:gen_d_colonies} and \eqref{eq:gen_d_colonies_mod} it suffices to note that the following equalities hold:  
\begin{align*} \MoveEqLeft
    \sum_{i = 1}^d \sum_{j \in [d] \setminus \{i\}} z_j \int_{[0, 1)^d \setminus \{0\}} u_i \mathbf{T}_z\mu_j(du) (r_j - r_i) \partial_i f(r) \\
    & = \sum_{i = 1}^d \sum_{j \in [d] \setminus \{i\}} z_i \int_{[0, 1)^d \setminus \{0\}} u_j \mathbf{T}_z\mu_i(du) (r_i - r_j) \partial_j f(r) \\
    & = \sum_{i = 1}^d z_i r_i \int_{[0, 1)^d \setminus \{0\}} \sum_{j \in [d] \setminus \{i\}} (1 - r_j) u_j \partial_j f(r) \mathbf{T}_z\mu_i(du) 
    - \sum_{i = 1}^d z_i (1 - r_i) \int_{[0, 1)^d \setminus \{0\}} \sum_{j \in [d] \setminus \{i\}} r_j u_j \partial_j f(r) \mathbf{T}_z\mu_i(du) .
\end{align*}

Let us now finish the existence part of Theorem \ref{theo:culling_limit}. 

\begin{proof}[Existence of a strong solution]
    Without loss of generality, we assume that $R(0)$ is deterministic. 
    We start by stating that for any $i \in [d]$ and $x \in [0, 1]^d$ 
    we have the simple bound 
    \begin{align}\MoveEqLeft
        |m_i(x)| + \sigma_i(x)^2 
        + \sum_{j = 1}^d \int_{[0, 1)^d \setminus \{0\}} \int_0^\infty (g_i^j(x, u, v)^2 + h_i^j(x, u, v)^2)  
        dv \mathbf{T}_z\mu_j(du)\notag\\
        & \leq  \sum_{k \in [d] \setminus \{i\}} \biggl( b_{ik} \frac{z_k}{z_i} 
        + z_k \int_{[0, 1)^d \setminus \{0\}} u_i \mathbf{T}_z\mu_k(du) \biggr) 
        + \frac{c_i}{2 z_i} 
        + \sum_{j = 1}^d 2 z_j \int_{[0, 1)^d \setminus \{0\}} u_i^2 \mathbf{T}_z\mu_j(du). 
        \label{eq:useful_bound}
    \end{align}
    By a straightforward generalization of Proposition 4.1 in \cite{liStrongSolutionsJumptype2012}, 
    this bound gives us that a c\`adl\`ag process is a weak solution (hence a strong solution) to 
    \eqref{eq:sde_d_colonies} if and only if \eqref{eq:martingale_problem} is a 
    locally bounded martingale. 
    
    We now consider the sequences $\{W_n\}$ and $\{V_n\}$ of non-decreasing sets defined 
    by 
    \[
        W_n = \{|u| > 1/n\} \subset [0, 1]^d 
        \quad\text{and}\quad 
        V_n = W_n \times [0, \max z_i + n] \subset [0, 1]^d \times \mathbb{R}_+.
    \]
    Then 
    $(\mathbf{T}_z\mu_i \times \mathrm{Leb}) (V_n) < \infty$, where $\mathrm{Leb}$ is the Lebesgue measure, 
    for all $n \in \mathbb{N}$ and $i \in [d]$. 
    Moreover, for any $n \in \mathbb{N}$, $i, j \in [d]$, the mappings 
    \[ 
        x \in [0, 1]^d 
        \mapsto \iint_{V_n} g_i^j(x, u, v) \mathbf{T}_z\mu_j(du) dv 
        \quad\text{and}\quad 
        x \in [0, 1]^d 
        \mapsto \iint_{V_n} h_i^j(x, u, v) \mathbf{T}_z\mu_j(du) dv 
    \]
    are seen to be Lipschitz, cf. \eqref{eq:preLip_g}, \eqref{eq:preLip_h} and \eqref{eq:gij_lipschitz}---the latter holds for $j = i$ when replacing $[0, 1]^d$ for $W_n$. 
    Hence, by results on continuous-type SDEs, 
    see Theorem 2.2 in Chapter 4 of \cite[p.~169]{ikedaStochasticDifferentialEquations1989}, 
    there is a weak solution, $R^{(n)}(t)=(R_1^{(n)}(t),\dots,R_d^{(n)}(t))$, to 
    \begin{align} \label{eq:almost_sde_cont}
        R_i^{(n)}(t) = {} 
        & R(0) + \int_0^t m_i(R^{(n)}(s)) ds + \int_0^t \sigma_i(R^{(n)}(s)) dB_i(s) \\
        & - \sum_{j = 1}^d \int_0^t \iint_{V_n} 
        (g_i^j(R^{(n)}(s), u, v) + h_i^j(R^{(n)}(s), u, v)) \mathbf{T}_z\mu_j(du) dv ds,  
    \end{align}
    with values in $[0, 1]^d$. 
    Pathwise uniqueness holds for the SDE \eqref{eq:almost_sde_cont} 
    due to Proposition \ref{prop:pathwise_uniqueness}, so it follows that 
    there is a strong unique solution for \eqref{eq:almost_sde_cont} by the 
    Yamada--Watanabe Theorem. 

    Then, by an easy modification of Proposition 2.2 in \cite{fuStochasticEquationsNonnegative2010},
    there exists a unique strong solution $\{R^{(n)}(t) : t \geq 0\}$ to 
    \begin{align} \label{eq:almost_sde}
        R_{i}^{(n)}(t) = {} 
        & R(0) + \int_0^t m_i(R^{(n)}(s)) ds + \int_0^t \sigma_i(R^{( n)}(s)) dB_i(s)\notag\\
        & + \sum_{j = 1}^d 
        \int_0^t \iint_{V_n} g_i^j(R^{(n)}(s-), u, v) \widetilde{N}_1(ds, du, dv)
        + \sum_{j = 1}^d 
        \int_0^t \iint_{V_n} h_i^j(R^{(n)}(s-), u, v) \widetilde{N}_2(ds, du, dv).
    \end{align}
    Indeed, due to the manner in which we chose $V_n$ and $W_n$, the Poisson point processes that appear in 
    the SDE \eqref{eq:almost_sde} have finite intensity. 
    Consequently, the number of jumps in any compact interval of time is finite, 
    allowing us to order the jumps of the Poisson processes by $0 < S_1 < S_2 < \cdots$. 
    Doing this, we can now construct the solution in a pathwise manner. 
    First consider a solution $\mathcal{R}^{(0)} = (\mathcal{R}^{(0)}_1, \ldots, \mathcal{R}^{(0)}_d)$ to \eqref{eq:almost_sde_cont}, and define 
    $R(t) = \mathcal{R}^{(0)}(t)$ for $t \in [0, S_1)$. 
    Define $r^{(1)}$ as the random vector whose $i$-th entry is given by 
    \begin{align*}
        r_i^{(1)} = {} 
        & \mathcal{R}_{i}^{(0)}(S_1-) 
        + \sum_{j = 1}^d \int_{\{S_1\}} \iint_{V_n} g_i^j(\mathcal{R}^{(0)}(s-), u, v) \widetilde{N}_1(ds, du, dv)\\
        &+ \sum_{j = 1}^d \int_{\{S_1\}} \iint_{V_n} h_i^j(\mathcal{R}^{(0)}(s-), u, v) \widetilde{N}_2(ds, du, dv).
    \end{align*}
    With the same Brownian motion used to construct $\mathcal{R}^{(0)}$ now 
    consider a solution $\mathcal{R}^{(1)}$ to the SDE 
    \begin{align*} 
        R_{i}^{(n)}(t) = {} 
        & r^{(1)} + \int_0^t m_i(R^{(n)}(s)) ds + \int_0^t \sigma_i(R^{(n)}(s)) dB^{(i)}(S_1 + s) \\
        & - \sum_{j = 1}^d \int_0^t \iint_{V_n} 
        (g_i^j(R^{(n)}(s), u, v) + h_i^j(R^{(n)}(s), u, v)) \mathbf{T}_z\mu_j(du) dv ds  
    \end{align*}
    and define $R(t) = \mathcal{R}^{(1)}(t - S_1)$ for $t \in [S_1, S_2)$. 
    By proceeding recursively we obtain a strong solution to \eqref{eq:almost_sde}. 
    
    Thus, by It\^o's formula, for any $f \in \mathcal{C}^2([0, 1]^d)$, the process 
    \[
        f(R^{(n)}(t)) - f(R^{(0)}) - \int_0^t \mathcal{A}^{(n,z)} f(R^{(n)}(s)) ds ,
        \quad t \geq 0,
    \]
    is a locally bounded martingale, 
    where $\mathcal{A}^{(n,z)}$ is given by replacing the integrals over 
    $[0, 1]^d$ with integrals over $W_n$ in \eqref{eq:gen_d_colonies_mod}. 
    As $\{R^{(n)}(t) : t \geq 0\}$ takes values in $[0, 1]^d$ for each $n \in \mathbb{N}$, 
    for every, $t \geq 0$, $\{R^{( n)}(t) : n \in \mathbb{N}\}$ is a tight 
    sequence of random vectors. 
    In addition, by \eqref{eq:useful_bound} we can use an analogous argument to that 
    given in the proof of Lemma 4.3 in \cite{fuStochasticEquationsNonnegative2010} 
    to show that Aldous' criterion holds, and hence 
    $\{R^{(n)}(t) : t \geq 0\}$ is actually tight in $D(\mathbb{R}_+, [0, 1]^d)$. 
    
    The existence of a strong solution follows from
    $\mathcal{A}^{(n,z)} f(x_n) \to \mathcal{A}^{(z)} f(x)$ if $x_n \to x$ 
    as $n \to \infty$,
  which is a consequence of a straightforward extension of the proof of 
    Lemma 4.2 in \cite{liStrongSolutionsJumptype2012}. 
\end{proof} 
To continue we will show that \eqref{eq:gen_d_colonies} is indeed the 
infinitesimal generator of the solution to \eqref{eq:sde_d_colonies}, and 
that this solution can be obtained as a weak limit of pure jump Markov processes 
obtained through sequential sampling. 

\subsection{Sequential sampling} 
\label{subsec:culling}
We are left with proving that \eqref{eq:gen_d_colonies} is 
actually the infinitesimal generator of the solution to 
\eqref{eq:sde_d_colonies}, as well as the Feller property.
Throughout this subsection we assume that $z \in (0, \infty)^d$ is given and fixed. 
Note that from \eqref{eq:pre_gronwall}, putting $\zeta = R^{(z,r)} - R^{(z,s)}$, Gronwall's lemma gives
\begin{equation} \label{eq:Lipschitz_R}
	\mathbb{E}\big[ \lVert R^{(z,r)}(t) - R^{(z,s)}(t) \rVert \big]
	\leq K(t) \lVert r - s \rVert.    
\end{equation}
\begin{proposition} 
    The strong solution $R^{(z,r)}$ of \eqref{eq:sde_d_colonies} is Feller and its 
    infinitesimal generator $\mathcal{A}^{(z)}$ is given by 
    \eqref{eq:gen_d_colonies}. 
\end{proposition}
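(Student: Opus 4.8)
The plan is to study the Markov semigroup $P_t f(r):=\mathbb{E}_r[f(R^{(z,r)}(t))]$, verify the two defining properties of a Feller semigroup, and then read off the generator from the martingale problem already established in the existence proof. Since pathwise uniqueness holds (Proposition~\ref{prop:pathwise_uniqueness}), Yamada--Watanabe gives uniqueness in law, so $R^{(z,r)}$ is a time-homogeneous strong Markov process and $(P_t)_{t\ge0}$ is a positive $\|\cdot\|_\infty$-contraction semigroup on the bounded measurable functions over the compact cube $[0,1]^d$. To see that $P_t$ maps $C([0,1]^d)$ into itself, note that for Lipschitz $f$ with constant $L_f$, coupling the two solutions through the same driving noise as in \eqref{eq:Lipschitz_R} yields $|P_tf(r)-P_tf(s)|\le L_f K(t)\|r-s\|$, so $P_tf$ is again Lipschitz; uniform density of the Lipschitz functions in $C([0,1]^d)$ together with the contraction property then gives $P_tf\in C([0,1]^d)$ for every continuous $f$.

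Next I would prove strong continuity at $0$. The crux is the uniform estimate $\sup_r\mathbb{E}_r[\|R^{(z,r)}(t)-r\|]=O(\sqrt t)$: applying It\^o's isometry to the diffusive and compensated-jump terms in \eqref{eq:sde_d_colonies} and estimating the drift trivially, the coefficient bound \eqref{eq:useful_bound}---which is uniform over $r\in[0,1]^d$---gives $\mathbb{E}_r[(R^{(z,r)}_i(t)-r_i)^2]\le Ct$ for $t\le1$, whence the claim by Cauchy--Schwarz. For Lipschitz $f$ this gives $\|P_tf-f\|_\infty\le L_f\,O(\sqrt t)\to0$, and a three-$\varepsilon$ argument using the uniform density of Lipschitz functions and the contraction property extends strong continuity to all of $C([0,1]^d)$. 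Combined with the previous step, $(P_t)$ is Feller.

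To identify the generator, recall from the existence proof that for $f\in\mathcal C^2([0,1]^d)$ the process \eqref{eq:martingale_problem} is a martingale, so $P_tf(r)-f(r)=\mathbb{E}_r[\int_0^t\mathcal A^{(z)}f(R^{(z,r)}(s))\,ds]$. Writing $g:=\mathcal A^{(z)}f$, which is continuous---hence bounded and uniformly continuous---on the compact cube, I would bound
\[
    \Big\|\tfrac1t(P_tf-f)-g\Big\|_\infty
    \le \sup_r\frac1t\int_0^t\mathbb{E}_r\big[\,|g(R^{(z,r)}(s))-g(r)|\,\big]\,ds
\]
and control the inner expectation by splitting on $\{\|R(s)-r\|<\delta\}$ and its complement, using uniform continuity of $g$ on the former and Markov's inequality with the uniform bound $\sup_r\mathbb{E}_r[\|R(s)-r\|]=O(\sqrt s)$ on the latter. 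This yields the uniform convergence $t^{-1}(P_tf-f)\to\mathcal A^{(z)}f$, so every $f\in\mathcal C^2([0,1]^d)$ lies in the domain of the Feller generator with $\mathcal A f=\mathcal A^{(z)}f$; since $\mathcal C^2([0,1]^d)$ is dense and, by well-posedness of the martingale problem, a core, the generator is exactly \eqref{eq:gen_d_colonies}.

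The main obstacle is the pair of uniform-in-$r$ moment estimates---deriving $\sup_r\mathbb{E}_r[\|R^{(z,r)}(t)-r\|]=O(\sqrt t)$ from \eqref{eq:useful_bound}, and then converting it, via uniform continuity of $\mathcal A^{(z)}f$, into uniform convergence of the difference quotient. Once these are secured, the Markov property and the passage from the martingale problem to the generator are routine.
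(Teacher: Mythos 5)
Your proof is correct, and its overall skeleton (semigroup $P_t$, coupling estimate \eqref{eq:Lipschitz_R} plus density for the Feller property, martingale problem for the generator) matches the paper's. The genuine difference lies in how you finish. The paper gets strong continuity for free from the martingale problem via the uniform bound \eqref{eq:sup_inf_gen}, which gives $\sup_r|T_tf(r)-f(r)|\leq Kt$ for $f\in\mathcal{C}^2$, and then identifies the generator by proving only \emph{pointwise} convergence of the difference quotients $t^{-1}(T_tf(r)-f(r))\to\mathcal{A}^{(z)}f(r)$, invoking Theorem 1.33 of \cite{bottcherLevyMattersIII2013} to upgrade the pointwise generator to the strong (Feller) generator. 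You instead prove the \emph{uniform} convergence of the difference quotients directly, which requires an ingredient the paper never derives: the uniform moment estimate $\sup_r\mathbb{E}_r[\lVert R^{(z,r)}(t)-r\rVert]=O(\sqrt{t})$, obtained from It\^o's isometry and \eqref{eq:useful_bound}, combined with a splitting argument using the uniform continuity of $\mathcal{A}^{(z)}f$ on the compact cube. Your route is self-contained (no external theorem on pointwise generators is needed) at the price of one extra quantitative estimate; the paper's route is shorter given the citation, and its $O(t)$ bound on $\lVert T_tf-f\rVert_\infty$ for $f\in\mathcal{C}^2$ is also slightly cleaner than your $O(\sqrt{t})$ bound for Lipschitz $f$ as a path to strong continuity. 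One small caveat: your closing remark that $\mathcal{C}^2$ is a core ``by well-posedness of the martingale problem'' is not justified as stated and is also not needed---the uniform convergence already shows that every $f\in\mathcal{C}^2([0,1]^d)$ lies in the domain with $Af=\mathcal{A}^{(z)}f$, which is all the proposition (and the paper) claims.
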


\begin{proof}
    For $t \geq 0$ and $f \in \mathcal{C}([0, 1]^d)$, we define the operator $T_t$ by 
    $T_t f(r) := \mathbb{E}[f(R^{(z,r)}(t))]$. 
    Then $\{T_t : t \geq 0\}$ is the semigroup associated to $R$. 
    By \eqref{eq:Lipschitz_R} and a Taylor expansion of 
    order 1, for any $f \in \mathcal{C}^1([0, 1]^d)$, $r, s \in [0, 1]^d$, 
    \[
        \left|T_t f(r) - T_t f(s)\right| 
        \leq \lVert \nabla f \rVert_{\infty} \mathbb{E}[ \lVert R^{(z,r)}(t) - R^{(z,s)}(t) \rVert ] 
        \leq K(t) \lVert \nabla f \rVert_\infty \lVert r - s \rVert,
    \]
    which gives us the continuity of $r \mapsto T_t f(r)$ for 
    $f \in \mathcal{C}^1([0, 1]^d)$. 
    By a density argument and dominated convergence, this implies the continuity of 
    $r \mapsto T_t g(r)$ for $g \in \mathcal{C}([0, 1]^d)$; 
    this is $T_t(\mathcal{C}([0, 1]^d)) \subset \mathcal{C}([0, 1]^d)$. 

    Given $f \in \mathcal{C}^2([0, 1]^d)$, Itô's formula gives us the equality 
    \[
        f(R^{(z,r)}(t)) = f(r) + \int_0^t \mathcal{A}^{(z)} f(R^{(z,r)}(s)) ds + M_t^f,
    \]
    with $M^f$ a local martingale. 
    In fact, $M^f$ is a locally bounded martingale, 
    which follows easily from the bound 
    \begin{equation} \label{eq:sup_inf_gen}
        \sup_{r \in [0, 1]^d} |\mathcal{A}^{(z)} f(r)| \leq K ,
    \end{equation}
    for some $K > 0$, from where we get 
    $|M_t^f| \leq Kt + 2 \|f\|_\infty < \infty$. 
    The bound given in \eqref{eq:sup_inf_gen} is a consequence of 
    $f \in \mathcal{C}^2([0, 1]^2)$ and the bounds that entail \eqref{eq:useful_bound}. 
    Then, as 
    \[
        \mathbb{E}[f(R^{(z,r)}(t))] - f(r) = 
        \mathbb{E} \biggl[ \int_0^t \mathcal{A}^{(z)} f(R^{(z,r)}(s)) ds \biggr], 
    \]
    we deduce both that
    \[
        \sup_{r \in [0, 1]^d} |T_t f(r) - f(r)| \leq K t \to 0 
        \quad\text{as } t \to 0, 
    \]
    and that, for any $r \in [0, 1]^d$, 
    \[
        \lim_{t \to 0} \frac{T_t f(r) - f(r)}{t}
        = \mathcal{A}^{(z)} f(r) .
    \]
    The former conclusion shows that $R^{(z,r)}$ is Feller, 
    while the latter lets us conclude that \eqref{eq:gen_d_colonies} 
    is the infinitesimal generator of $R^{(z,r)}$ due to 
    Theorem 1.33 in \cite[p.~24]{bottcherLevyMattersIII2013}. 
\end{proof}

By a simple modification of the proof of Theorem 1 in 
\cite{caballeroRelativeFrequencyTwo2023} for $d=1$ one obtains that for any $z \in \mathbb{R}_+^d \setminus \{0\}$ and $T > 0$, 
    $\overline{R}^{n} \Rightarrow R^{(r)}$ as $n \to \infty$ in $D([0, T], [0, 1]^d)$,
    where $R^{(r)}$ is the solution to \eqref{eq:sde_d_colonies} with initial 
    condition $R(0) = r.$
This concludes the proof of Theorem \ref{theo:culling_limit}.

\subsection{Proof of Theorem \ref{theo:block_duality} }
To conclude the section we 
will prove the moment duality of the sequentially sampled limit and the block counting process.

We will first work at the level of infinitesimal generators, applying \eqref{eq:gen_d_colonies} to polynomials of the form $r^n$ for $r \in [0, 1]^d$ and $n \in \mathbb{N}_0^d$, where $r^n$ is understood as in \eqref{eq:multiidex_power}. 
In this direction we will use the identities 
\begin{align*}
    (r_j - r_i) n_i r^{n - e_i}
    & = n_i (r^{n - e_i + e_j} - r^n), \\
    r_i (1 - r_i) n_i (n_i - 1) r^{n - 2 e_i} 
    & = 2 \binom{n_i}{2} (r^{n - e_i} - r^n),
\end{align*}
along the equalities 
\begin{align*} \MoveEqLeft
    r_i \Bigl[ \bigl( r + (1 - r) \odot u \bigr)^n - r^n - (1 - r_i) u_i n_i r^{n - e_i} \Bigr]
    + (1 - r_i) \Bigl[ \bigl( r - r \odot u \bigr)^n - r^n + r_i u_i n_i r^{n - e_i} \Bigr] \\
    & = \sum_{k \in [n] \setminus \{0, e_i\}} \binom{n}{k} u^k (1 - u)^{n - k} 
    (r^{n - k + e_i} - r^n) .
\end{align*}
where in the las sum we are using the multiindex notation as in \eqref{eq:multiidex_power} and \eqref{eq:multiindex_binom}. 
Then, by considering the function 
$H : [0, 1]^d \times \mathbb{N}_0^d \to \mathbb{R}_+$ defined  
as $H(r, n) = r^n$, 
the previous expressions entail  
\begin{align} \label{eq:inf_gen_app_dual}
    \mathcal{A}^{(z)} H(r, n) &= {}
     \sum_{i = 1}^d 2 \frac{c_i}{z_i} \binom{n_i}{2} (r^{n - e_i} - r^n) 
    + \sum_{i = 1}^d \sum_{j \in [d] \setminus \{i\}} n_i b_{ij} 
    \frac{z_j}{z_i} (r^{n - e_i + e_j} - r^n) \notag\\
    & + \sum_{i = 1}^d \sum_{k \in [n]_0 \setminus \{0, e_i\}} z_i 
    \prod_{j = 1}^d \binom{n_j}{k_j} \lambda_{n, k}^i (r^{n - k + e_i} - r^n), 
\end{align}
where for $n \in \mathbb{N}_0^d$, $k \in [n]_0$ and $i \in [d]$ 
we define 
\begin{align*}
    \lambda_{n, k}^i 
    & = \int_{[0, 1)^d \setminus \{0\}} \prod_{j = 1}^d u_j^{k_j} (1 - u_j)^{n_j - k_j} 
    \mathbf{T}_z \mu_i(du) , 
\end{align*}
whenever the quantities are finite. 
Expression \eqref{eq:inf_gen_app_dual} and a straightforward modification of the proof of Theorem 4 in \cite{caballeroRelativeFrequencyTwo2023} complete the proof of Theorem \ref{theo:block_duality}.



%
%
%
%
%
%
%
%
%

\appendix


\bibliographystyle{amsplain}
\bibliography{refs.bib}

\providecommand{\bysame}{\leavevmode\hbox to3em{\hrulefill}\thinspace}
\providecommand{\MR}{\relax\ifhmode\unskip\space\fi MR }
\providecommand{\MRhref}[2]{%
  \href{http://www.ams.org/mathscinet-getitem?mr=#1}{#2}
}
\providecommand{\href}[2]{#2}
\begin{thebibliography}{10}

\bibitem{barczyStochasticDifferentialEquation2015a}
Matyas Barczy, Zenghu Li, and Gyula Pap, \emph{Stochastic differential equation with jumps for multi-type continuous state and continuous time branching processes with immigration}, ALEA, Latin American Journal of Probability and Mathematical Statistics \textbf{12} (2015), no.~1, 129--169. \MR{3340375}

\bibitem{berestyckiSmalltimeCouplingLambdacoalescents2014}
Julien Berestycki, Nathana{\"e}l Berestycki, and Vlada Limic, \emph{{A small-time coupling between $\Lambda$-coalescents and branching processes}}, The Annals of Applied Probability \textbf{24} (2014), no.~2, 449--475. \MR{3178488}

\bibitem{bertoinBolthausenSznitmanCoalescent2000}
Jean Bertoin and Jean-Fran{\c c}ois Le~Gall, \emph{The {{Bolthausen}}--{{Sznitman}} coalescent and the genealogy of continuous-state branching processes}, Probability Theory and Related Fields \textbf{117} (2000), no.~2, 249--266. \MR{1771663}

\bibitem{bertoinStochasticFlowsAssociated2003}
\bysame, \emph{Stochastic flows associated to coalescent processes}, Probability Theory and Related Fields \textbf{126} (2003), no.~2, 261--288. \MR{1990057}

\bibitem{bertoinStochasticFlowsAssociated2005}
\bysame, \emph{Stochastic flows associated to coalescent processes {{II}}: {{ Stochastic}} differential equations}, Annales de l'Institut Henri Poincare (B) Probability and Statistics \textbf{41} (2005), no.~3, 307--333. \MR{2139022}

\bibitem{bertoinStochasticFlowsAssociated2006}
\bysame, \emph{Stochastic flows associated to coalescent processes. {{III}}. {{ Limit}} theorems}, Illinois Journal of Mathematics \textbf{50} (2006), no.~1-4, 147--181. \MR{2247827}

\bibitem{birknerAlphaStableBranchingBetaCoalescents2005}
Matthias Birkner, Jochen Blath, Marcella Capaldo, Alison Etheridge, Martin M{\"o}hle, Jason Schweinsberg, and Anton Wakolbinger, \emph{Alpha-stable branching and beta-coalescents}, Electronic Journal of Probability \textbf{10} (2005), no.~none, 303--325. \MR{2120246}

\bibitem{blancasTreesTreesSimple2018}
Airam Blancas, Jean-Jil Duchamps, Amaury Lambert, and Arno {Siri-J{\'e}gousse}, \emph{Trees within trees: Simple nested coalescents}, Electronic Journal of Probability \textbf{23} (2018), no.~none, 1--27. \MR{3858922}

\bibitem{Blathetal2020}
Jochen Blath, Adri{\'a}n {Gonz{\'a}lez Casanova}, Noemi Kurt, and Maite {Wilke-Berenguer}, \emph{The seed bank coalescent with simultaneous switching}, Electronic Journal of Probability \textbf{25} (2020), no.~none, 1--21. \MR{4073688}

\bibitem{bottcherLevyMattersIII2013}
Björn Böttcher, René Schilling, and Jian Wang, \emph{Lévy matters {III}}, Lecture notes in mathematics, vol. 2099, Springer International Publishing, Cham, 2013. \MR{3156646}

\bibitem{caballeroAffineProcessesMathbb2017}
M.~Emilia Caballero, Jos{\'e}-Luis P{\'e}rez, and Ger{\'o}nimo {Uribe Bravo}, \emph{Affine processes on $\mathbb{R}_{+}^{m}\times\mathbb{R}^{n}$ and multiparameter time changes}, Annales de l'Institut Henri Poincar{\'e}, Probabilit{\'e}s et Statistiques \textbf{53} (2017), no.~3, 1280--1304. \MR{3689968}

\bibitem{caballeroRelativeFrequencyTwo2023}
Maria~Emilia Caballero, Adri{\'a}n {Gonz{\'a}lez Casanova}, and Jos{\'e}-Luis P{\'e}rez, \emph{The relative frequency between two continuous-state branching processes with immigration and their genealogy}, The Annals of Applied Probability \textbf{34} (2024), no.~1B, 1271--1318. \MR{4700259}

\bibitem{donnellyCountableRepresentationFlemingViot1996}
Peter Donnelly and Thomas~G. Kurtz, \emph{A countable representation of the {{Fleming-Viot}} measure-valued diffusion}, The Annals of Probability \textbf{24} (1996), no.~2, 698--742. \MR{1404525}

\bibitem{donnellyParticleRepresentationsMeasureValued1999}
\bysame, \emph{Particle representations for measure-valued population models}, The Annals of Probability \textbf{27} (1999), no.~1, 166--205. \MR{1681126}

\bibitem{duffieAffineProcessesApplications2003}
D.~Duffie, D.~Filipovi{\'c}, and W.~Schachermayer, \emph{Affine processes and applications in finance}, The Annals of Applied Probability \textbf{13} (2003), no.~3, 984--1053. \MR{1994043}

\bibitem{duquesneRandomTreesLevy2005}
Thomas Duquesne and Jean-Fran{\c c}ois Le~Gall, \emph{Random trees, {L{\'e}vy} processes and spatial branching processes}, 2005. \MR{1954248}

\bibitem{ethierMarkovProcessesCharacterization2005}
Stewart~N. Ethier and Thomas~G. Kurtz, \emph{Markov processes: {Characterization} and convergence}, Wiley Series in Probability and Statistics, Wiley Interscience, Hoboken, 2005. \MR{0838085}

\bibitem{fuStochasticEquationsNonnegative2010}
Zongfei Fu and Zenghu Li, \emph{Stochastic equations of non-negative processes with jumps}, Stochastic Processes and their Applications \textbf{120} (2010), no.~3, 306--330 (en). \MR{2584896}

\bibitem{Gillespie74}
John~H. Gillespie, \emph{Natural selection for within-generation variance in offspring number {II}. {Discrete} haploid models}, Genetics \textbf{81} (1975), no.~2, 403--413. \MR{0395917}

\bibitem{ikedaStochasticDifferentialEquations1989}
Nobuyuki Ikeda and Shinzo Watanabe, \emph{Stochastic differential equations and diffusion processes}, 2 ed., North-Holland Mathematical Library, vol.~24, North-Holland, Amsterdam, 1989. \MR{1011252}

\bibitem{johnstonMultitypeLcoalescents2022a}
Samuel G.~G. Johnston, Andreas Kyprianou, and Tim Rogers, \emph{Multitype {$\Lambda$}-coalescents}, The Annals of Applied Probability \textbf{33} (2023), no.~6A, 4210--4237. \MR{4674049}

\bibitem{johnstonCoalescentStructureUniform2021}
Samuel G.~G. Johnston and Amaury Lambert, \emph{The coalescent structure of uniform and {Poisson} samples from multitype branching processes}, The Annals of Applied Probability \textbf{33} (2023), no.~6A, 4820--4857. \MR{4674065}

\bibitem{Lamperti1967}
John Lamperti, \emph{Continuous state branching processes}, Bulletin of the American Mathematical Society \textbf{73} (1967), no.~3, 382--386. \MR{0208685}

\bibitem{legallRandomTreesApplications2005}
Jean-Fran{\c c}ois Le~Gall, \emph{Random trees and applications}, Probability Surveys \textbf{2} (2005), no.~none, 245--311. \MR{2203728}

\bibitem{legallBranchingProcessesLevy1998}
Jean-Fran{\c c}ois Le~Gall and Yves Le~Jan, \emph{Branching processes in {{L{\'e}vy}} processes: The exploration process}, The Annals of Probability \textbf{26} (1998), no.~1, 213--252. \MR{1617047}

\bibitem{liStrongSolutionsJumptype2012}
Zenghu Li and Fei Pu, \emph{Strong solutions of jump-type stochastic equations}, Electronic Communications in Probability \textbf{17} (2012), no.~none, 1--13. \MR{2965746}

\bibitem{maStochasticEquationsTwotype2014}
Rugang Ma, \emph{Stochastic equations for two-type continuous-state branching processes with immigration and competition}, Statistics \& Probability Letters \textbf{91} (2014), 83--89 (english). \MR{3208120}

\bibitem{mohleMultitypeCanningsModels2023}
Martin M{\"o}hle, \emph{On multi-type {Cannings} models and multi-type exchangeable coalescents}, Theoretical Population Biology \textbf{156} (2024), 103--116.

\bibitem{Pitman1999}
Jim Pitman, \emph{Coalescents with multiple collisions}, The Annals of Probability \textbf{27} (1999), no.~4, 1870--1902. \MR{1742892}

\bibitem{rudinRealComplexAnalysis2013}
Walter Rudin, \emph{Real and complex analysis}, 3 ed., McGraw-Hill International Editions Mathematics Series, McGraw-Hill, New York, 2013. \MR{0344043}

\bibitem{Sagitov1999}
Serik Sagitov, \emph{The general coalescent with asynchronous mergers of ancestral lines}, Journal of Applied Probability \textbf{36} (1999), no.~4, 1116--1125. \MR{1742154}

\bibitem{satoLevyProcessesInfinitely2013}
{Ken-iti} Sato, \emph{L{\'e}vy processes and infinitely divisible distributions}, 2 ed., Cambridge Studies in Advanced Mathematics, no.~68, Cambridge University Press, Cambridge, 2013. \MR{3185174}

\bibitem{situTheoryStochasticDifferential2005}
Rong Situ, \emph{Theory of stochastic differential equations with jumps and applications: mathematical and analytical techniques with applications to engineering}, Mathematical and analytical techniques with applications to engineering, Springer, New York, 2005. \MR{2160585}

\bibitem{watanabeTwoDimensionalMarkov1969}
Shinzo Watanabe, \emph{On two dimensional {Markov} processes with branching property}, Transactions of the American Mathematical Society \textbf{136} (1969), 447--466. \MR{0234531}

\end{thebibliography}

\end{document}